\documentclass[11pt,oneside,reqno]{amsart}

\usepackage{fullpage}

\usepackage{color}
\usepackage{amssymb}
\usepackage{amsmath}
\usepackage{arydshln}
\usepackage{hyperref}
\usepackage{bbm}
\usepackage{mathrsfs}

%added by Viktor
\usepackage{dsfont}
\usepackage{xcolor}

\usepackage{verbatim} % for comments

% changes hyperlinks and references
\hypersetup{
	colorlinks   = true, %Colours links instead of ugly boxes
	urlcolor     = {blue!90!black}, %Colour for external hyperlinks
	linkcolor    = {blue!90!black}, %Colour of internal links
	citecolor   = {red!90!black} %Colour of citations
}

%%
% BibTeX logo; normally not needed
\def\BibTeX{{\rm B\kern-.05em{\sc i\kern-.025em b}\kern-.08em
    T\kern-.1667em\lower.7ex\hbox{E}\kern-.125emX}}

\hfuzz1pc % Don't bother to report overfull boxes if overage is < 1pc

% equation counter will be reset at the start of each section
\numberwithin{equation}{section}
\numberwithin{figure}{section}

\newtheorem{satz}{Satz}[section]
\newtheorem{Theorem}[satz]{Theorem}
\newtheorem{Proposition}[satz]{Proposition}

\newtheorem{Corollary}[satz]{Corollary}

\newtheorem{Lemma}[satz]{Lemma}

\newtheorem{Remark}[satz]{Remark}

\newtheorem{Example}[satz]{Example}

\newcommand{\R}{\mathbb{R}}
\newcommand{\E}{\mathbb{E}}
\renewcommand{\P}{\mathbb{P}}

\newcommand{\e}{\varepsilon}
\newcommand{\1}{\mathds{1}}
\newcommand{\F}{\mathcal{F}}
\newcommand{\X}{\mathcal{X}}
\newcommand{\N}{\mathbb{N}}
\newcommand{\Z}{\mathbb{Z}}

\begin{document}

\title[IPS with continuous spins]{Interacting particle systems with continuous spins}

\author[Viktor Bezborodov]{Viktor Bezborodov}
\address[Viktor Bezborodov]{Institute for Mathematical Stochastics,
Georg-August-University of Goettingen,
Goldschmidtstra{\ss}e 7,
37077 Goettingen,
Germany}
\email{viktor.bezborodov@uni-goettingen.de}

\address[Viktor Bezborodov]{Wroc{\l}aw University of Science and Technology,
Faculty of Electronics,
ul. Janiszewskiego 11/17,
50-372 Wroc{\l}aw, Poland}
\email{viktor.bezborodov@pwr.edu.pl}

\author[Luca Di Persio]{Luca Di Persio}
\address[Luca Di Persio]{Department of Computer Science, University of Verona, Strada le Grazie 15, Verona, Italy}
\email{luca.dipersio@univr.it}

\author{Martin Friesen}
\address[Martin Friesen]{School of Mathematical Sciences, Dublin City University, Glasnevin, Dublin 9, Ireland}
\email{martin.friesen@dcu.ie}

\author[Peter Kuchling]{Peter Kuchling}
\address[Peter Kuchling]{Hochschule Bielefeld - University of Applied Sciences and Arts}
\email{peter.kuchling@hsbi.de}

\date{\today}

\subjclass[2010]{Primary 60J27; Secondary 60K35, 82C20}

\keywords{Interacting particle system; branching; shape theorem; limit distribution}

\begin{abstract} 
 We study a general class of interacting particle systems over a countable state space $V$ where on each site $x \in V$ the particle mass $\eta(x) \geq 0$ follows a stochastic differential equation. We construct the corresponding Markovian dynamics in terms of strong solutions to an infinite coupled system of stochastic differential equations and prove a comparison principle with respect to the initial configuration as well as the drift of the process. Using this comparison principle, we provide sufficient conditions for the existence and uniqueness of an invariant measure in the subcritical regime and prove convergence of the transition probabilities in the Wasserstein-1-distance. Finally, for sublinear drifts, we establish a linear growth theorem showing that the spatial spread is at most linear in time. Our results cover a large class of finite and infinite branching particle systems with interactions among different sites.
\end{abstract}

\maketitle

\allowdisplaybreaks

\section{Introduction}

We consider a continuous-state branching process over a countable space $V$. For an a-priori fixed weight function $v: V \longrightarrow (0,\infty)$, we define the space of tempered configurations over $V$ via
\begin{equation}\label{Linderung = relief}
    \mathcal{X} = \bigg\{ \eta = (\eta(x))_{x \in V} \ | \ \eta(x) \in \R_+\ \forall x \in V \text{ and } \sum_{x \in V}\eta(x)v(x) < \infty \bigg\}. 
\end{equation}
Then $(\X,d)$ is a complete and separable metric space when equipped with the distance given by the weighted sum
\[
 d(\eta,\xi)=\| \eta - \xi \| := \sum_{x \in V}v(x)|\eta(x) - \xi(x)|.
\]
We equip $\X$ with the corresponding Borel $\sigma$-algebra. For a given configuration $\eta = (\eta(x))_{x \in V} \in \X$ the number $\eta(x) \geq 0$ describes the mass of particles at the site $x \in V$. By $0 \in \X$ we denote the empty configuration $\eta(x) = 0$ for all $x \in V$.  The weight function $v$ allows for a flexible treatment of finite and infinite particle systems. Indeed, if $\inf_{x \in V}v(x) > 0$, then elements in $\X$ are necessarily summable sequences that correspond to finite particle configurations. On the other side, if $v(x)$ has sufficient decay at "infinity", then $\X$ may contain sequences that are not summable corresponding to infinite tempered configurations.

In this work, we study particle dynamics on $\X$ where at each moment of time $t\geq 0$ and each $x \in V$ the value of the process $\eta_t(x)$ represents the mass at location $x$ at time $t$. This mass follows the system of stochastic equations
\begin{align}\label{SDE}
    \eta_t(x) &= \eta_0(x) + \int_0^t B(x,\eta_s)ds 
    + \int_0^t \sqrt{2c(x,\eta_s(x))}dW_s(x)
    \\* \notag &\ \ \ + \int_0^t \int_{\X \backslash \{0\}}\int_{\R_+} \nu(x) \mathbbm{1}_{ \{ u \leq g(x,\eta_{s-}(x))\} } \widetilde{N}_x(ds,d\nu,du)
    \\* \notag &\ \ \ + \sum_{y \in V \backslash \{x\}}\int_0^t \int_{\X \backslash \{0\}}\int_{\R_+} \nu(x) \mathbbm{1}_{ \{ u \leq g(y,\eta_{s-}(y))\} } N_y(ds,d\nu,du)
    \\* \notag &\ \ \ + \int_{0}^t \int_{\X \backslash \{0\}} \int_{\R_+}\nu(x) \mathbbm{1}_{ \{ u \leq \rho(x,\eta_{s-},\nu)\} } M(ds,d\nu,du)
\end{align}
where $B(x, \cdot): \X \longrightarrow \R$,
$c(x,\cdot),g(x,\cdot): \R_+ \longrightarrow \R_+$ with $c(x,0) = g(x,0) = 0$, and $\rho: V \times \X \times \X \backslash \{0\} \longrightarrow \R_+$ are measurable functions. All these parameters are supposed to satisfy the additional conditions specified below. The noise terms are defined on a stochastic basis $(\Omega, \mathcal{F}, (\mathcal{F}_t)_{t \geq 0}, \P)$ with the usual conditions and satisfy the following assumptions:
\begin{itemize}
 \item[(N1)] $(W_t(x))_{t\geq 0,\ x \in V}$ are mutually independent one-dimensional $(\mathcal{F}_t)_{t \geq 0}$-Brownian motions.
 \item[(N2)] $(N_y(ds,d\nu,du))_{y \in V}$ are mutually independent $(\mathcal{F}_t)_{t \geq 0}$-Poisson random measures on $\R_+ \times \X \backslash \{0 \} \times \R_+$ with compensator $\widehat{N}_y(ds,d\nu,du) = ds H_1(y,d\nu)du$, 
 where $H_1(y,d\nu)$ is, for each $y \in V$, a sigma-finite measure on $\X \backslash \{0\}$.
 \item[(N3)] $M(ds,d\nu,du)$ is an $(\F_t)_{t \geq 0}$-Poisson random measure on $\R_+ \times \X \backslash \{0 \} \times \R_+$ with compensator $M(ds,d\nu,du) = ds H_2(d\nu)du$, where $H_2(d\nu)$ is a sigma-finite measure on $\X \backslash \{0\}$.
 \item[(N4)] The noise terms $(W_t(x))_{t\geq 0,\ x \in V}$, $(N_y(ds,d\nu,du))_{y \in V}$, $M(ds,d\nu,du)$ are independent.
\end{itemize}
Finally, we let $\widetilde{N}_x = N_x - \widehat{N}_x$ denote the compensated Poisson random measure. For the notion of weak and strong existence, we employ the following standard definition. A strong solution of \eqref{SDE} is an $(\F_t)_{t \geq 0}$-adapted cadlag process $(\eta_t)_{t \geq 0} \subset \X$ such that \eqref{SDE} holds a.s. for each $x \in V$ and $t \geq 0$. A weak solution consists of a stochastic basis $(\Omega, \F, (\F_t)_{t \geq 0}, \P)$, an $(\F_t)_{t \geq 0}$-adapted cadlag process $(\eta_t)_{t \geq 0} \subset \X$ and noise terms (N1) -- (N4) such that \eqref{SDE} holds a.s. for each $x \in V$ and $t \geq 0$. In this definition, we implicitly assume that all integrals in \eqref{SDE} are well-defined. 

The system of stochastic equations \eqref{SDE} contains finite and infinite dimensional models studied in the literature. In the finite-dimensional case, $V = \{1,\dots, m\}$ with $v(x) = 1$, we have $\X = \R_+^m$. If the coefficients $B(x,\eta), c(x,t), g(x,t)$ are affine linear in $\eta$ and $t$, and $\rho = 1$, then \eqref{SDE} reduces to multi-type continuous-state branching processes with immigration as constructed in \cite{BLP15}, see also e.g. \cite[Chapter 3]{L11} for the one-dimensional case $m = 1$. Moreover, for general $B,c,g$ and $V = \{1\}$ equation \eqref{SDE} reduces to nonlinear continuous-state branching processes with immigration studied in \cite{FL10}, \cite{LYZ19}, and \cite{FJKR19}. Thus, the stochastic particle system studied in this work provides an infinite-dimensional extension of multi-type CBI processes and their non-linear analogues. More generally, for infinite state spaces $V$, our model covers a wide class of interacting particle systems including, e.g., multi-type branching systems \cite{P06, F22}, population models with interactions \cite{HW07}, systems of particles driven by $\alpha$-stable noises \cite{XZ10}, branching random walks with discrete state space \cite{ShiBook}. For other related literature, we refer to \cite{DG03}.

Here and below we write, for $\eta,\xi \in \X$, $\eta \leq \xi$ if $\eta(x) \leq \xi(x)$ holds for all $x \in V$. We impose the following conditions on the coefficients of \eqref{SDE}:
\begin{enumerate}
    \item[(A1)] The drift coefficient $B(x,\eta)$ has the form $B(x,\eta) = B_0(x,\eta) - B_1(x,\eta(x))$ where $B_0(x,\cdot): \X \longrightarrow \R_+$ and $B_1(x,\cdot): \R_+ \longrightarrow \R_+$ are measurable mappings for each $x \in V$. For each $R > 0$ there exists a constant $C_1(R) \geq 0$ such that
   \[
   \|B_0(\cdot, \eta) - B_0(\cdot, \xi) \| \leq C_1(R) \| \eta - \xi \|
   \]
   holds for all $\eta,\xi \in \X$ with $\| \eta\|, \| \xi \| \leq R$. The function $\R_+ \ni t \longmapsto B_1(x,t)$ is continuous, non-decreasing, and $B_1(x,0) = 0$ holds for each $x \in V$. Finally, for all $\eta, \xi \in \X$ satisfying $\eta \leq \xi$, it holds that 
   \[
    B_0(x, \eta) \leq B_0(x,\xi), \qquad \forall x \in V.
   \]
   
   \item[(A2)] For each $x \in V$ there exists a constant $C_2(x) \geq 0$ such that 
   \[
    |c(x,t) - c(x,s)| \leq C_2(x)|t-s|
   \]
   holds for  $t,s \in \R_+$, and
   \[
    \sum_{x \in V}v(x)C_2(x) < \infty.
   \]
    Furthermore $c(x,0) = 0$ for $x \in V$.

   \item[(A3)] For each $x \in V$ there exists a constant $C_3(x) \geq 0$
   such that 
   \[
    |g(x,t) - g(x,s)| \leq C_3(x)|t-s|, \qquad t,s \geq 0.
   \]
   The function $\R_+ \ni t \longmapsto g(x,t)$ is non-decreasing for each $x \in V$, and one has $g(x,0) = 0$ for each $x \in V$.
    
   \item[(A4)] It holds that
   \[
     \sum_{x \in V}v(x)C_3(x)\int_{ \{ \| \nu \| \leq 1\} \backslash \{0\}} \nu(x)^2 H_1(x,d\nu) < \infty, 
    \]
   and, there exists a constant $C_4 \geq 0$ such that
   \[
    C_3(x) \int_{ \{\| \nu \| > 1 \}} \nu(x) H_1(x,d\nu) \leq C_4
   \]
   and 
   \[
     C_3(x)\int_{\X \backslash \{0\}} \sum_{y \in V \backslash \{x\}} v(y)\nu(y) H_1(x,d\nu) \leq C_4 v(x)
   \]
   hold for all $x \in V$.
 
   \item[(A5)] For each $R > 0$ there exists a constant $C_5(R) \geq 0$ such that
    \[
     \int_{\X \backslash \{0\}} \sum_{x \in V}v(x)\nu(x) | \rho(x,\eta, \nu) - \rho(x,\xi, \nu)| H_2(d\nu) \leq C_5(R)\| \eta - \xi\|
    \]
    holds for all $\eta,\xi \in \X$ with $\| \eta \|, \|\xi\| \leq R$. For all $\eta, \xi \in \X$ satisfying $\eta \leq \xi$, it holds that
    \[
     \rho(x,\eta,\nu) \leq \rho(x,\xi,\nu), \qquad \forall x \in V, \ \ \nu \in \X \backslash \{0\}.
    \]
    
  \item[(A6)] There exists a constant $C_6 \geq 0$ such that 
  \[
   \| B_0(\cdot, \eta)\| + \int_{\X \backslash \{0\} } \sum_{x \in V}v(x)\nu(x) \rho(x,\eta,\nu) H_2(d\nu) \leq C_6(1 + \| \eta\|).
  \]
\end{enumerate}
We say that the tuple $(B, B_0, B_1, c, g, \rho)$
is $C_{\overline{1,6}}$-admissible if conditions (A1)--(A6) are satisfied with given $C_1, C_2, C_3, C_4, C_5, C_6$. Sufficient conditions for these assumptions and particular examples are discussed in the next section. Under this general set of conditions, we derive the following existence and uniqueness result of strong solutions, as well as the comparison property of solutions.  
\begin{Theorem}\label{thm existence uniqueness comparison}
 Suppose that conditions (A1) -- (A6) are satisfied. Then for each $\F_0$-measurable random variable $\eta_0$ with $\E[\|\eta_0\|] < \infty$ there exists a unique strong solution $(\eta_t)_{t \geq 0}$ in $\X$ of \eqref{SDE}. Moreover, there exists a constant $C > 0$ independent of $\eta_0$ such that
 \begin{align*}
     \E[ \| \eta_t \| ] \leq \left(1 + \E[ \| \eta_0 \| ]\right) e^{Ct}, \qquad t \geq 0.
 \end{align*}
 Finally, for any $\eta_0, \xi_0 \in \X$ with $\E[\|\eta_0\|], \ \E[ \|\xi_0\|] < \infty$ let $(\eta_t)_{t \geq 0}, (\xi_t)_{t \geq 0}$ be the unique strong solutions of \eqref{SDE}. If $\P[ \xi_0(x) \leq \eta_0(x), \ \ \forall x \in V] = 1$, then 
 \[ 
  \P[\xi_t \leq \eta_t, \ \forall t \geq 0] = 1. 
 \]
\end{Theorem}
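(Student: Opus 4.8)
The plan is to run the Yamada--Watanabe programme in the weighted $\ell^1$-norm $\|\cdot\|$, coordinate by coordinate over $x\in V$, and to read off all three assertions from essentially one computation. Throughout I would localise with $\tau_R=\inf\{t\ge 0:\|\eta_t\|\ge R\}$ (and its analogues for the other solutions), prove each estimate on $[0,t\wedge\tau_R]$, and only then let $R\to\infty$ via Fatou; moreover I would split every Poisson integral into its parts on $\{\|\nu\|\le 1\}$ and $\{\|\nu\|>1\}$ (the latter having locally finite activity), so that after localisation all stochastic integrals are honest martingales. The moment bound comes first and most cheaply: applying It\^o's formula to $\|\eta_t\|=\sum_{x\in V}v(x)\eta_t(x)$ (no absolute values are needed, as $\eta_t\ge 0$) and taking expectations, the Brownian and compensated-small-jump terms vanish, the term $-\sum_x v(x)B_1(x,\eta_s(x))$ is nonpositive and is discarded, and the remaining drift and jump terms are controlled by (A6) (for $B_0$ and the $M$-integral) and by (A4) (for the $\widetilde N_x$- and $N_y$-integrals, using $g(x,t)\le C_3(x)t$ and reorganising $\sum_x\sum_{y\ne x}$ through the third bound of (A4)); Gronwall's lemma for $\E[\|\eta_{t\wedge\tau_R}\|]$ and $R\to\infty$ then give the stated exponential bound, which a posteriori also confirms that the solution stays in $\X$.

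For pathwise uniqueness I would take two solutions $(\eta_t),(\xi_t)$ driven by the \emph{same} noise with $\eta_0=\xi_0$, use the standard approximations $\psi_k\uparrow|\cdot|$ with $0\le\psi_k'\le 1$ and $0\le\psi_k''(z)\le 2/(k|z|)$ on a shrinking support, apply It\^o's formula to $\psi_k(\eta_t(x)-\xi_t(x))$, multiply by $v(x)$ and sum over $x\in V$. The Lipschitz difference $B_0(x,\eta_s)-B_0(x,\xi_s)$ contributes $\le C_1(R)\int_0^t\|\eta_s-\xi_s\|\,ds$ by (A1); the $B_1$-part has the favourable sign and drops out; the second-order Brownian term is handled with $|\sqrt{c(x,t)}-\sqrt{c(x,s)}|^2\le C_2(x)|t-s|$, so the factor $|\eta_s(x)-\xi_s(x)|$ cancels the $1/(k|z|)$ in $\psi_k''$ and leaves a remainder $\le (C/k)\sum_x v(x)C_2(x)\to 0$ by (A2); the compensated small $\widetilde N_x$-jumps are treated identically, now using that $g$ is $C_3(x)$-Lipschitz, leaving $\le (C/k)\sum_x v(x)C_3(x)\int_{\{\|\nu\|\le 1\}}\nu(x)^2 H_1(x,d\nu)\to 0$ by (A4); and the large $\widetilde N_x$-jumps, the $N_y$-jumps with $y\ne x$, and the $M$-jumps are Lipschitz in $\|\cdot\|$ by the last two bounds of (A4) and by (A5), contributing $\le C(R)\int_0^t\|\eta_s-\xi_s\|\,ds$. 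Sending $k\to\infty$, then Gronwall, then $R\to\infty$ forces $\eta\equiv\xi$. Existence I would obtain by solving the finite subsystems on $V_n\uparrow V$ --- or directly by a Picard iteration in which the off-diagonal coefficients are frozen at the previous step, so that each coordinate is a one-dimensional equation of the type in \cite{FL10} with monotone drift, H\"older-$\tfrac12$ diffusion and Lipschitz jump rate --- and then passing to the limit: the uniform moment bound together with the same $\psi_k$-estimates gives $\sup_{t\le T}\E[\|\eta^{(n)}_t-\eta^{(m)}_t\|]\to 0$, so the limit solves \eqref{SDE} (alternatively, tightness yields a weak solution, which pathwise uniqueness upgrades via the Yamada--Watanabe theorem).

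For the comparison I would couple $(\eta_t),(\xi_t)$ with the same noise and $\xi_0\le\eta_0$, and run exactly the same computation on $\psi_k(\xi_t(x)-\eta_t(x))$ with $\psi_k\uparrow(\cdot)^+$ supported on $(0,\infty)$, aiming at $D_t:=\E\big[\sum_x v(x)(\xi_t(x)-\eta_t(x))^+\big]=0$. On $\{\xi_s(x)>\eta_s(x)\}$ one writes $B_0(x,\xi_s)-B_0(x,\eta_s)\le B_0(x,\xi_s\vee\eta_s)-B_0(x,\eta_s)$, which is legitimate because $\xi_s\vee\eta_s\in\X$ with $\xi_s\vee\eta_s-\eta_s=(\xi_s-\eta_s)^+$, so (A1) bounds this by $C_1(2R)D_s$; the $B_1$-term is $\le 0$ there; the Brownian contribution at the level $0$ vanishes by (A2) exactly as in the uniqueness proof. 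For the jumps I would use that $g$ and $\rho$ are non-decreasing together with the same maximum trick: an $N_y$- or $M$-jump can raise $(\xi(x)-\eta(x))^+$ only on the set where $\xi$ has the larger firing rate, i.e.\ with intensity at most $[g(y,\xi_s(y))-g(y,\eta_s(y))]^+\le C_3(y)(\xi_s(y)-\eta_s(y))^+$, respectively at most $\rho(x,\xi_s\vee\eta_s,\nu)-\rho(x,\eta_s,\nu)$, so after summation (A4) and (A5) bound these by $C(R)\int_0^t D_s\,ds$; the compensated small $\widetilde N_x$-jumps again cost only $\le C/k\to 0$. Gronwall and $R\to\infty$ yield $D_t=0$ for each fixed $t$, hence $\xi_t\le\eta_t$ a.s., and right-continuity of the paths upgrades this to hold simultaneously for all $t\ge 0$.

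The hard part is the non-Lipschitz ``diagonal'' structure of \eqref{SDE} --- the $\tfrac12$-H\"older diffusion coefficient $\sqrt{c(x,\cdot)}$, the infinite-activity $g$-driven jumps, and the merely monotone drift $-B_1$ --- which rules out a naive contraction and forces the Yamada--Watanabe approximations with the sharp bound $\psi_k''(z)\le 2/(k|z|)$, so that the H\"older/Lipschitz factor $|z|$ can be cancelled; one must then still sum the residual errors over the infinitely many sites, and this closes precisely because of the weighted summability encoded in (A2) and (A4). A secondary nuisance is that the jump integrals are genuinely of infinite variation, so the small/large-jump splitting and the localisation are indispensable for keeping every martingale term honest; and in the comparison statement the fact that $B_0$ and $\rho$ are monotone only in the full configuration, not coordinatewise, is exactly what forces the insertion of the pointwise maximum $\xi_s\vee\eta_s\in\X$ before the Lipschitz estimates can be used.
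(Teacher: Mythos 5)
Your programme for the moment bound, pathwise uniqueness, and the comparison principle coincides with the paper's in all essentials: the localisation by $\tau_R$, the small/large-jump splitting, the Yamada--Watanabe approximations $\psi_k\uparrow|\cdot|$ and $\psi_k\uparrow(\cdot)^+$ with the sharp bound $\psi_k''(z)\le 2/(k|z|)$, the use of (A2) and (A4) so that the residual $O(1/k)$ errors remain summable over $x\in V$, and the insertion of the pointwise maximum/minimum before applying (A1)/(A5) — this last step is exactly what the paper isolates as a separate lemma. The only cosmetic difference is that you work with $\xi\vee\eta$ where the paper uses $\eta\wedge\xi$; the computation is the same.

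There is, however, a genuine gap in your existence step. You claim that after solving the truncated systems over $V_n\uparrow V$, \textquotedblleft the uniform moment bound together with the same $\psi_k$-estimates gives $\sup_{t\le T}\E[\|\eta^{(n)}_t-\eta^{(m)}_t\|]\to 0$.\textquotedblright\ This does not follow. The Yamada--Watanabe estimates you proved compare two solutions of the \emph{same} equation; when $\eta^{(n)}$ and $\eta^{(m)}$ solve equations with different (truncated) coefficients, the It\^o--Tanaka computation produces additional terms coming from the coefficient differences on $V_n\setminus V_m$, and there is no a priori bound that makes these vanish — the cardinality of $V_n\setminus V_m$ is unbounded, and the error is not controlled by $\|\eta^{(n)}-\eta^{(m)}\|$ alone. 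Your alternative route via tightness is not developed either; tightness in the weighted $\ell^1$-space $\X$ is not automatic from the one-coordinate moment bounds. The paper's actual device here is different and essential: it upgrades the comparison principle to one that compares solutions with \emph{different} drifts and with coefficients truncated to a sub-lattice (Theorem \ref{thm special comparison}), from which it deduces that $\eta^{(N)}_t\le\eta^{(N+1)}_t$ a.s.\ for all $t$. Monotone convergence together with the uniform first-moment bound then shows that $\eta_t:=\sup_N\eta^{(N)}_t$ is an $\X$-valued process, and one verifies term-by-term (via monotone/dominated convergence) that it solves \eqref{SDE}. That monotonicity-in-$N$ is the missing idea in your sketch, and without it the passage to the limit does not close.
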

This theorem will be deduced from the results of Sections 3 - 6. Indeed, in Section 3 we prove the non-explosion and first moment bound for any solution of \eqref{SDE}. In Section 4 we establish the pathwise uniqueness of solutions under slightly weaker conditions than (A1) -- (A6), while the comparison principle is derived in Section 5. To prove these results, we provide an infinite dimensional extension of the classical Yamada-Watanabe theorem, see also \cite{FL10, Ma13, BLP15, FJR19b} for some finite-dimensional results in this direction. Finally, in Section 6 we prove, by using finite-dimensional approximations combined with the comparison principle, the weak existence of solutions of \eqref{SDE}. Combining all these results gives in view of the Yamada--Watanabe--Engelbert theorem (see \cite{MR2336594}) the strong existence of a unique solution of \eqref{SDE}. 

The space $\X$ introduced in \eqref{Linderung = relief} can be seen as a non-negative cone in an $L^1$-type space. The space of configurations summable with respect to given weights is a natural choice for a state space in the construction of interacting particle systems. This choice goes back at least to Liggett and Spitzer \cite{LS81} and Andjel \cite{Andj82}. The construction of the stochastic particle systems as solutions to stochastic equations driven by Poisson point processes is not uncommon, as it was used in \cite{Gar95, GK06, BaDlattice} for the study of birth-and-death processes with an infinite number of particles. Such stochastic equations can be seen as a natural development of the  graphical construction for classical interacting particle systems
such as the contact process or the voter model \cite{Lig99}.

As an application of this construction and the comparison principle with respect to the initial conditions, we prove under a suitable subcriticality condition on the drift the existence and uniqueness of and convergence towards the invariant measure in the Wasserstein distance. Let $\mathcal{P}(\X)$ be the space of all Borel probability measures over $\X$ and let $\mathcal{P}_1(\X)$ be the subspace of measures with finite first moment, i.e. $\int_{\X}\|\nu\| \varrho(d\nu) < \infty$. Let 
\begin{equation}\label{eq:transition_prob}
 p_t(\eta,d\xi) = \P[ \eta_t \in d\xi \ | \ \eta_0 = \eta]
\end{equation}
be the transition probabilities of the process $(\eta_t)_{t \geq 0}$ obtained from \eqref{SDE}. A general version of Yamada--Watanabe theorem and Theorem \ref{thm existence uniqueness comparison} imply that the strong solution is unique in law (\cite[Theorem 3.14]{MR2336594}); therefore $ p_t$ in \eqref{eq:transition_prob} is well defined. Define the semigroup $(P^*_t)_{t \geq 0}$ by $P^*_t \rho = \int_{\X} p_t(\eta, \cdot) \rho(d\eta)$, where $\rho \in \mathcal{P}_1(\X)$. Recall that $\pi \in \mathcal{P}(\X)$ is called \textit{invariant measure} if $P^*_t \pi =  \pi$ for $t > 0$. The Wasserstein-1 distance is defined by
\[
 W_1(\varrho, \widetilde{\varrho}) = \inf_{G \in \mathcal{H}(\varrho, \widetilde{\varrho})}\left\{ \int_{\X \times \X}\| \eta - \xi\| G(d\eta,d\xi) \right\}
\]
where $\mathcal{H}(\varrho, \widetilde{\varrho})$ denotes the set of all couplings of $(\varrho, \widetilde{\varrho})$ on the product space $\X \times \X$. Define the effective drift
\begin{align}\label{eq: effective drift}
 \widetilde{B}(x,\eta) &= B(x,\eta) + \sum_{y \in V \backslash \{x\}}g(y,\eta(y))\int_{\X \backslash \{0\}}\nu(x)H_1(y,d\nu)
 \\ \notag &\ \ \ + \int_{\X \backslash \{0\}}\nu(x)\rho(x,\eta,\nu)H_2(d\nu).
\end{align}
Note that this function is well-defined due to conditions (A4) and (A6). 

\begin{Theorem}\label{thm: invariant measures}
 Suppose that conditions (A1) -- (A6) are satisfied and that the constants from conditions (A1) and (A5) satisfy $\sup_{R > 0}(C_1(R) + C_5(R)) < \infty$. Assume additionally that there exists $A > 0$ such that 
 \begin{align}\label{eq: subcritical drift}
  \sum_{x \in V}v(x)\left(\widetilde{B}(x,\eta) - \widetilde{B}(x,\xi)\right) 
  \leq - A\| \eta - \xi\| 
 \end{align} 
 holds for all $\eta,\xi \in \X$ with $\xi \leq \eta$. Then for any $\varrho, \widetilde{\varrho} \in \mathcal{P}_1(\X)$ one has
 \begin{align}\label{eq: wasserstein 1 contraction estimate}
  W_1(P_t^*\varrho, P_t^*\widetilde{\varrho}) \leq e^{-At}W_1(\varrho, \widetilde{\varrho}), \qquad t \geq 0.
 \end{align}
 In particular, there exists a unique invariant measure $\pi \in \mathcal{P}_1(\X)$ and it holds that
 \begin{align}\label{eq: wasserstein 1 inv measure}
  W_1(P_t^*\varrho, \pi) \leq e^{-At}W_1(\varrho, \pi), \qquad t \geq 0.
 \end{align}
\end{Theorem}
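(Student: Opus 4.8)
The estimate \eqref{eq: wasserstein 1 contraction estimate} is an expectation bound for the weighted $\ell^1$-distance of two coupled solutions, and the subcriticality hypothesis \eqref{eq: subcritical drift} says precisely that along an \emph{ordered} pair of configurations the effective drift $\widetilde B$ from \eqref{eq: effective drift} contracts this distance at rate $A$. The plan is therefore: (i) prove $\E[\|\eta_t-\xi_t\|]\le e^{-At}\E[\|\eta_0-\xi_0\|]$ for solutions driven by the same noise from $\F_0$-measurable data with $\xi_0\le\eta_0$ a.s.; (ii) remove the ordering by inserting the pointwise maximum $\eta_0\vee\xi_0$ and invoking the comparison principle of Theorem~\ref{thm existence uniqueness comparison}; (iii) integrate over a coupling to pass to arbitrary $\varrho,\widetilde\varrho\in\mathcal P_1(\X)$; (iv) obtain existence, uniqueness and \eqref{eq: wasserstein 1 inv measure} by Banach's fixed point theorem on $(\mathcal P_1(\X),W_1)$.

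\textbf{Step (i): the ordered case.} Let $(\eta_t)$ and $(\xi_t)$ solve \eqref{SDE} with the same noise, $\xi_0\le\eta_0$ a.s.\ and $\E[\|\eta_0\|],\E[\|\xi_0\|]<\infty$. By Theorem~\ref{thm existence uniqueness comparison}, $\xi_t\le\eta_t$ for all $t$ a.s., so $\|\eta_t-\xi_t\|=\sum_{x}v(x)(\eta_t(x)-\xi_t(x))$ with nonnegative summands. Taking expectations in \eqref{SDE} coordinatewise — after localization the Brownian and $\widetilde N_x$ integrals are martingales by the $L^2$/$L^1$ bounds following from (A2)--(A4) and the a priori moment bound of Theorem~\ref{thm existence uniqueness comparison}, while the uncompensated $N_y$ ($y\neq x$) and $M$ integrals contribute their compensators, finite by (A3), (A4), (A6) — one obtains, exactly as in the derivation of the first moment bound,
\[
 \E[\eta_t(x)]=\E[\eta_0(x)]+\E\!\int_0^t\widetilde B(x,\eta_s)\,ds,\qquad x\in V,
\]
and likewise for $\xi$. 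Since $\E[\|\eta_t\|],\E[\|\xi_t\|]<\infty$, these are absolutely convergent in $x$; subtracting, weighting by $v(x)$ and summing, the interchange of $\sum_x$ with $\E\int_0^t$ is justified by Tonelli applied to the monotone constituents of $\widetilde B(x,\eta)-\widetilde B(x,\xi)$ — whose weighted sum is $\le C\|\eta-\xi\|$ with $C$ independent of $\|\eta\|,\|\xi\|$ by (A1), (A4), (A5) together with $\sup_{R>0}(C_1(R)+C_5(R))<\infty$ — combined with the identity above applied to the death term $B_1$. Writing $\phi(t):=\E[\|\eta_t-\xi_t\|]<\infty$ this gives
\[
 \phi(t)=\phi(0)+\E\!\int_0^t\sum_{x}v(x)\big(\widetilde B(x,\eta_s)-\widetilde B(x,\xi_s)\big)\,ds\le\phi(0)-A\!\int_0^t\phi(s)\,ds,
\]
the inequality being \eqref{eq: subcritical drift} for the a.s.\ ordered pair $(\xi_s,\eta_s)$, and Grönwall's lemma yields $\phi(t)\le e^{-At}\phi(0)$.

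\textbf{Steps (ii)--(iv).} Fix $\varrho,\widetilde\varrho\in\mathcal P_1(\X)$ and a coupling $G\in\mathcal H(\varrho,\widetilde\varrho)$; realize $(\eta_0,\xi_0)\sim G$ on the stochastic basis carrying the noise, independently of it, and set $\zeta_0:=\eta_0\vee\xi_0\in\X$, so that $\eta_0\le\zeta_0$, $\xi_0\le\zeta_0$, and $\E[\|\zeta_0\|]\le\E[\|\eta_0\|]+\E[\|\xi_0\|]<\infty$. Running the coupled solutions $(\eta_t),(\xi_t),(\zeta_t)$, the comparison principle gives $\eta_t\le\zeta_t$ and $\xi_t\le\zeta_t$; applying Step (i) to $(\eta_t,\zeta_t)$ and to $(\xi_t,\zeta_t)$ and using the pointwise identity $|\zeta_0(x)-\eta_0(x)|+|\zeta_0(x)-\xi_0(x)|=|\eta_0(x)-\xi_0(x)|$,
\[
 \E[\|\eta_t-\xi_t\|]\le\E[\|\zeta_t-\eta_t\|]+\E[\|\zeta_t-\xi_t\|]\le e^{-At}\big(\E[\|\zeta_0-\eta_0\|]+\E[\|\zeta_0-\xi_0\|]\big)=e^{-At}\!\int_{\X\times\X}\!\|\eta-\xi\|\,G(d\eta,d\xi).
\]
Since $(\eta_t,\xi_t)$ is a coupling of $(P_t^*\varrho,P_t^*\widetilde\varrho)$ (both in $\mathcal P_1(\X)$ by the moment bound), the left side dominates $W_1(P_t^*\varrho,P_t^*\widetilde\varrho)$, and taking the infimum over $G$ proves \eqref{eq: wasserstein 1 contraction estimate}. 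As $\X$ is Polish, $(\mathcal P_1(\X),W_1)$ is a complete metric space, and by \eqref{eq: wasserstein 1 contraction estimate} the map $P_1^*$ is a strict contraction on it; Banach's fixed point theorem yields a unique $\pi\in\mathcal P_1(\X)$ with $P_1^*\pi=\pi$. For $s>0$ the semigroup property gives $P_1^*(P_s^*\pi)=P_s^*(P_1^*\pi)=P_s^*\pi$, so $P_s^*\pi=\pi$ by uniqueness; hence $\pi$ is invariant, and it is the only invariant measure in $\mathcal P_1(\X)$ since any such measure is fixed by $P_1^*$. Finally, \eqref{eq: wasserstein 1 inv measure} is \eqref{eq: wasserstein 1 contraction estimate} with $\widetilde\varrho=\pi$.

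\textbf{Main difficulty.} The substance of the argument lies entirely in Step (i): showing that the expectation of \eqref{SDE} may be taken termwise — identifying the genuine-martingale parts by a localization controlled through the moment bound and the integrability in (A2)--(A4), the large-jump parts of the $N_x$-integrals being $\P\otimes dt$-integrable and the small-jump parts $L^2$ — and, above all, exchanging the site-sum with expectation and time integration despite the merely continuous, possibly non-Lipschitz, death terms $B_1(x,\cdot)$. This is where the extra hypothesis $\sup_{R>0}(C_1(R)+C_5(R))<\infty$ is used, as it supplies the $R$-free Lipschitz control of the increasing constituents of $\widetilde B$ that, via Tonelli and the per-coordinate first-moment identity, also pins down the $B_1$-part; everything after Step (i) is soft.
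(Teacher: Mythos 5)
Your proof is correct, and it differs from the paper's in one genuinely interesting way. Step (i) — the contraction $\E[\|\eta_t-\xi_t\|]\le e^{-At}\E[\|\eta_0-\xi_0\|]$ for an a.s.\ ordered pair, via the per-coordinate first-moment identity, the subcriticality condition \eqref{eq: subcritical drift}, and Grönwall — is essentially the paper's opening computation, and your accounting of the Fubini interchange (Tonelli on the monotone constituents of $\widetilde B$, the $B_1$-piece pinned down by the identity itself) is more explicit than what the paper writes. The real divergence is the removal of the ordering. The paper fixes a numeration $V=\{x_k\}$, builds the coordinate-by-coordinate interpolating staircase $\xi_0^n$ between $\xi_0$ and $\eta_0$ (so each consecutive pair $(\xi_0^n,\xi_0^{n+1})$ is ordered), sums the ordered contraction over the steps, and kills the remaining tail $\E[\|\xi_t^n-\eta_t\|]$ by the $L^1$-Lipschitz bound of Theorem~\ref{thm: uniqueness} — this tail estimate is precisely where the hypothesis $\sup_{R>0}(C_1(R)+C_5(R))<\infty$ is invoked in the paper. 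You instead insert the pointwise maximum $\zeta_0=\eta_0\vee\xi_0\in\X$, apply the ordered contraction to $(\eta_t,\zeta_t)$ and $(\xi_t,\zeta_t)$, and use the exact identity $(\zeta_0(x)-\eta_0(x))+(\zeta_0(x)-\xi_0(x))=|\eta_0(x)-\xi_0(x)|$. This is cleaner: no approximating sequence, no limit, and no appeal to the pathwise-Lipschitz estimate of Theorem~\ref{thm: uniqueness} at all. The cost is nil; in fact your route exposes that the global-Lipschitz hypothesis enters, at most, in the Fubini justification of Step (i) rather than being essential to the de-ordering step itself. The passage to general $\varrho,\widetilde\varrho$ via couplings and the Banach fixed-point argument for existence, uniqueness and \eqref{eq: wasserstein 1 inv measure} are the same soft content as in the paper.
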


This result extends the methods from \cite{FJR19a} and \cite{FJKR19} to the infinite-dimensional case of particle systems on $\X$. While the comparison principle still remains the key tool to derive stability estimates in the $L^1$-norm, when working with infinite-dimensional settings, additional conditions are required in order to control the states for all sites $x \in V$. The latter are reflected by the assumption $\sup_{R > 0}(C_1(R) + C_5(R)) < \infty$. Assumption \eqref{eq: subcritical drift} is motivated by the one-dimensional case studied in \cite{FJKR19} and can be viewed as a subcriticality (or strong dissipativity) condition on the drift, i.e., the drift needs to be sufficiently negative. Without immigration, such a condition implies that the invariant measure is given by the empty configuration $\delta_0$. In the presence of non-trivial immigration, the invariant measure is necessarily non-trivial as well. 

It is worth mentioning that for branching systems with interactions such a condition is certainly not optimal as recently was demonstrated in \cite{LLWZ22} for the one-dimensional case. However, our conditions are relatively simple to verify and allow for a simple illustrative proof. The extension of \cite{LLWZ22} to our infinite dimensional case is yet an open problem from the literature. Finally, our subcriticality condition \eqref{eq: subcritical drift} also rules out the possibility of multiple invariant measures, a fact that is natural for certain infinite particle systems as studied in \cite{HW07}. An extension of our results to such types of settings is, however, beyond the scope of this work.

In the last part of this work, we investigate the growth of a finite particle system without immigration (that is $H_2(d\nu) = 0)$ when started at a single point. We demonstrate that whenever the effective drift $\widetilde{B}$ defined in \eqref{eq: effective drift} is sublinear,  the spatial spread of the process is at most linear in time. Also, we provide a super-exponential bound in the 'large deviations' region of the process. For both results, we suppose that $V$ is the vertex set of an infinite connected graph $G = (V, E)$ of bounded degree. We let $\mathrm{dist}(z,z')$ be the graph distance for $z,z' \in V$, and denote by $\mathbb{B}(x_0, r )\subset V$ the closed ball of radius $r$ with respect to this graph distance. 

Let us underline that, unlike particle systems with discrete states, determining what qualifies as an `occupied site' for a continuous-state process is not a straightforward task. In the following result, we interpret a site as occupied if the particle mass is larger than a given threshold $\e > 0$.
\begin{Theorem}\label{thm: at most linear}
Suppose conditions (A1) -- (A6) are satisfied with $H_2(d\nu) = 0$ and weight function $v$. Let $x_0 \in V$ and let $(\eta_t)_{t \geq 0}$ be the unique solution of \eqref{SDE} with initial condition $\eta_0(x) = \1_{\{ x = x_0 \}}$. Assume that there exists bounded $b: V \times V \longrightarrow \R_+$ such that, for all $x \in V$ and $\eta \in \X$, one has
\begin{equation}\label{Wirrsal = confusion}
     \widetilde{B}(x,\eta) \leq \sum_{y \in V}b(x,y)\eta(y), 
\end{equation}
there exists $R \in \N$ such that $b(x,y) = 0$ holds for $x, y \in V$ satisfying $\mathrm{dist}(x,y) > R$, and the weight function $v$ satisfies 
\begin{align}\label{eq: v growth condition}
 \sup_{\mathrm{dist}(x,y) \leq R}\frac{v(y)}{v(x)} < \infty.
\end{align} 
Then there exist constants $C, c, m > 0$ such that, for all $x \in V$ and $t \geq 0$, one has
 \begin{equation} \label{der Vorrat = supply}
      \E \left[\sup_{r \in [0,t]} \eta_r(x) \right] \leq C \exp\left[- c{ \text{ \emph{dist}}}(x_0, x) \ln (  \text{\emph{dist}}(x_0, x) ) + mt\right].
 \end{equation}
 Moreover, for any $\varepsilon > 0$  we find $\mathrm{C}_l > 0$ such that 
 \begin{equation}\label{in Auftrag geben}
     \left\{z \in V : \sup_{r \in [0,t]} \eta_r (z) \geq \varepsilon \right\} \subset \mathbb{B}(x_0, \mathrm{C}_l t )
 \end{equation}
 holds a.s. for  $t \geq t_0$, where $t_0$ is random.  The constant $\mathrm{C}_l$ may depend only on $\varepsilon$ and the parameters of the 
 process. 
 \end{Theorem}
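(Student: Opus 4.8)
The plan is to first establish the moment bound \eqref{der Vorrat = supply} and then derive the linear spread \eqref{in Auftrag geben} from it by a Borel--Cantelli argument. For the moment bound, I would work with the quantity $u_t(x) := \E\left[\sup_{r \in [0,t]}\eta_r(x)\right]$. Since $H_2(d\nu) = 0$, there is no immigration term, and the comparison principle from Theorem \ref{thm existence uniqueness comparison} lets me replace the genuine (possibly nonlinear) drift by its sublinear majorant: by \eqref{Wirrsal = confusion} the process $\eta$ is dominated by the solution $\overline{\eta}$ of the \emph{linear} system whose effective drift at $x$ is exactly $\sum_y b(x,y)\overline{\eta}(y)$. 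So it suffices to bound $\overline{\eta}$. Taking the supremum over $r\in[0,t]$ in \eqref{SDE}, applying the Burkholder--Davis--Gundy inequality to the Brownian and compensated-Poisson martingale parts, and using (A2), (A4) to control the quadratic variations by linear expressions in $\E[\sup \eta]$, I obtain an inequality of the form
\begin{equation*}
 u_t(x) \leq \1_{\{x = x_0\}} + K\int_0^t \sum_{y : \mathrm{dist}(x,y)\leq R} \widetilde{b}(x,y)\, u_s(y)\, ds
\end{equation*}
for a suitable bounded kernel $\widetilde{b}$ supported on the $R$-ball, where the weight ratios \eqref{eq: v growth condition} are used to absorb the cross terms from (A4) into the diagonal weights (this is where condition \eqref{eq: v growth condition} enters). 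Iterating this integral inequality (Gronwall/Picard iteration) gives
\begin{equation*}
 u_t(x) \leq \sum_{n \geq 0}\frac{(Kt)^n}{n!}\sum_{\substack{x_0 = y_0, y_1, \dots, y_n = x \\ \mathrm{dist}(y_{i-1},y_i)\leq R}}\prod_{i=1}^n \widetilde{b}(y_{i-1},y_i) \|\widetilde{b}\|_\infty^{\,0}.
\end{equation*}
Only paths of length $n \geq \mathrm{dist}(x_0,x)/R$ contribute, and the number of such paths of length $n$ is at most $D^{Rn}$ where $D$ is the bounded degree; hence $u_t(x) \leq \sum_{n \geq \mathrm{dist}(x_0,x)/R}(K' t)^n/n!$. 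Estimating this tail of the exponential series by the standard bound $\sum_{n \geq N}a^n/n! \leq (ea/N)^N$ for $N \geq 2ea$, and writing $N = \mathrm{dist}(x_0,x)/R$, yields exactly a factor $\exp[-c\,\mathrm{dist}(x_0,x)\ln \mathrm{dist}(x_0,x) + mt]$ after collecting the $t$-dependent terms, which is \eqref{der Vorrat = supply}.

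For \eqref{in Auftrag geben}, I would apply Markov's inequality: for fixed $t$, $\P[\sup_{r\in[0,t]}\eta_r(z) \geq \e] \leq \e^{-1}u_t(z) \leq C\e^{-1}\exp[-c\,\mathrm{dist}(x_0,z)\ln \mathrm{dist}(x_0,z) + mt]$. Summing over $z \notin \mathbb{B}(x_0, \mathrm{C}_l t)$ and over integer times $t \in \N$: the number of $z$ at distance $k$ grows at most like $D^k$, polynomially-controlled against the super-exponential decay, so for $\mathrm{C}_l$ large enough (depending on $m$, $c$, $\e$, and the degree $D$) the double sum $\sum_{t \in \N}\sum_{z \notin \mathbb{B}(x_0,\mathrm{C}_l t)}\P[\dots]$ converges. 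Borel--Cantelli then gives that a.s. for all large integer times $t$, no site outside $\mathbb{B}(x_0, \mathrm{C}_l t)$ is occupied; a monotonicity-in-$t$ interpolation (or simply enlarging $\mathrm{C}_l$ slightly) upgrades this from integer times to all real $t \geq t_0$. This produces \eqref{in Auftrag geben} with $t_0$ the (random) last integer time at which the Borel--Cantelli event fails.

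The main obstacle I anticipate is the first step: setting up the integral inequality for $u_t(x)$ rigorously. One must check that $u_t(x) < \infty$ (which follows from the first-moment bound in Theorem \ref{thm existence uniqueness comparison} combined with BDG, but needs the running supremum, not just the fixed-time value), justify interchanging sum over $y$ and expectation, and handle the large-jump part of $N_y$ (the $\{\|\nu\| > 1\}$ piece controlled by the second inequality in (A4)) without a compensator — this term is genuinely a sum of positive jumps and contributes a term linear in $u_s(y)$ that must be folded into the kernel. The weight-ratio hypothesis \eqref{eq: v growth condition} is exactly what makes the kernel $\widetilde{b}$ bounded \emph{uniformly in $x$} after this folding, which is essential for the path-counting bound to be spatially homogeneous. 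Everything downstream (iteration, path counting, Borel--Cantelli) is then routine.
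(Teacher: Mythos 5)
Your proposal takes a genuinely different route from the paper for the central estimate. Where the paper compares $f_t(x)=\E[\zeta_t(x)]$ (after the same comparison-principle reduction to a constant-coefficient linear drift) with the heat kernel of a continuous-time random walk on an auxiliary graph $\widehat G$ and then invokes the Davies--Pang bound of \cite[Corollary 12]{HeatKernelGraph} (Lemma~\ref{verdrehen = twist, distort}), you propose to derive the super-exponential decay by elementary path counting: iterate the integral inequality, observe that only paths of length $n\geq \mathrm{dist}(x_0,x)/R$ contribute, bound the number of such paths by $D^n$, and estimate the tail $\sum_{n\geq N}a^n/n!\leq (ea/N)^N$. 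This does reproduce the same $-c\,\mathrm{dist}\ln\mathrm{dist}$ shape with no external input, and it is a perfectly viable (and arguably more self-contained) substitute for Lemma~\ref{verdrehen = twist, distort}. That alternative route is sound.

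There is, however, a genuine gap in how you set up the iteration. You propose to apply Picard iteration directly to $u_t(x)=\E\bigl[\sup_{r\in[0,t]}\eta_r(x)\bigr]$ and claim a \emph{linear} integral inequality
\[
u_t(x)\leq \1_{\{x=x_0\}}+K\int_0^t\sum_{y:\mathrm{dist}(x,y)\leq R}\widetilde b(x,y)u_s(y)\,ds,
\]
justified by ``using (A2), (A4) to control the quadratic variations by linear expressions.'' But the Burkholder--Davis--Gundy bound for the Brownian and small-jump martingale parts gives $\E[\sup|\mathcal M|]\lesssim \bigl(\int_0^t \E[\eta_s(x)]\,ds\bigr)^{1/2}$; the quadratic variation is indeed linear in $\eta$, but BDG puts a square root around it. Only the large-jump ($\|\nu\|>1$) part admits a genuinely linear first-moment bound. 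So the linear integral inequality you write down does not follow, and a Picard iteration on $u_t$ does not produce the clean exponential series you need. The paper avoids this by a two-step structure: first prove the decay of $f_t(x)=\E[\eta_t(x)]$ at a fixed time, for which one only needs to take expectations in \eqref{SDE sulky} --- all martingales vanish, so the evolution of $f_t$ is \emph{exactly} the linear ODE $\dot f_t=Af_t$ with compactly supported kernel, and your path-counting argument applies cleanly there --- and only then, in a second pass (Step~3 of the paper's proof), apply BDG/Doob and feed in the already-established decay of $f_s$, accepting the halving of the decay exponent that the square root costs. Your proposal should be restructured this way: path-count on $f_t(x)$, not $u_t(x)$, and bootstrap. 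The Borel--Cantelli step at the end is fine as you sketch it and matches the paper.
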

 
 The assumption on the effective drift $\widetilde{B}$ allows us to compare the process with a simpler process that consists of a linear drift plus a martingale part. The condition on $b$ essentially states that the branching mechanism of this process has finite range. Note that \eqref{eq: v growth condition} implies that $v$ satisfies with $e^\kappa = \sup_{\mathrm{dist}(x,y) \leq R}\frac{v(y)}{v(x)}$ the growth bound 
 \begin{align}\label{v: growth bound 2}
  v(x_0)e^{-\kappa \mathrm{dist}(x_0,x)} \leq v(x) \leq v(x_0)e^{\kappa\mathrm{dist}(x_0,x)}, \qquad x \in V.
 \end{align}
  
 The proof of Theorem \eqref{thm: at most linear} is given in the last section of this work. Our proof relies on the comparison principle combined with heat kernel estimates of a random walk on the graph $G$. Namely, using the comparison principle combined with the linear growth condition on the effective drift, we obtain a bound of the form $\eta_t \leq \zeta_t$, where $\zeta_t$ has a constant drift. It is easy to see that $\eta_t$ satisfies the assertion whenever the larger process $\zeta_t$ satisfies the assertion. To prove that $\zeta_t$ satisfies the above theorem, we use an auxiliary graph to apply known heat kernel estimates from \cite[Corollary 12]{HeatKernelGraph}.

This work is organized as follows. In Section 2 we discuss particular examples of \eqref{SDE} and further elaborate on related literature. Section 3 is devoted to the non-explosion and first moment bound on solutions of \eqref{SDE}. Pathwise uniqueness is established in Section 4 while the comparison principle is proven in Section 5. Section 6 contains the proof of the weak existence of solutions of \eqref{SDE} while the proof of Theorem \ref{thm: invariant measures} is proven in Section 7. Finally, the linear spread, that is Theorem \ref{thm: at most linear}, is proven in Section 8. A few minor technical results are given in the appendix.

\section{Sufficient conditions and particular examples}

As a first step, we state a proposition that allows us to verify conditions (A1) -- (A6) in a general framework and hence serves as a toolbox for particular examples. 
\begin{Proposition}\label{prop: sufficient conditions}
 Let $v: V \longrightarrow (0,\infty)$ and suppose that
 \begin{enumerate}
     \item[(i)] The drift $B: V \times \X \longrightarrow \R$ is given by 
    \[
    B(x,\eta) = \left( b(x) + \sum_{y \in V} a(x,y)\eta(y)\right) - m(x)\eta(x)^{\lambda}
    \]
    where $\lambda \geq 0$, $b, m: V \longrightarrow \R_+$, $\|b\| < \infty$, and $a: V \times V \longrightarrow \R$ is such that $a(x,y) \geq 0$ for $x \neq y$, and there exists $C_1 > 0$ satisfying
    \begin{align}\label{eq: a drift}
    \sum_{y \in V \backslash \{x\}}v(y)a(x,y) + \1_{\{a(x,x) \geq 0\}}a(x,x)v(x) \leq C_1v(x), \qquad x \in V.
    \end{align}

  \item[(ii)] The diffusion coefficient is given by $c(x,\eta(x)) = c(x)\eta(x)$ where $c: V \longrightarrow \R_+$ satisfies $\| c\| < \infty$.

  \item[(iii)] The branching rate is given by $g(x,\eta(x)) = g(x)\eta(x)$ with $g: V \longrightarrow \R_+$, and $H_1(x,\cdot)$ is a family of $\sigma$-finite measures on $\X \backslash \{0\}$ such that
  \begin{align*}
        &\ \sum_{x \in V}v(x)g(x) \int_{\{\|\nu\| \leq 1\}\backslash \{0\}} \nu(x)^2 H_1(x,d\nu) < \infty
    \end{align*}
    and
    \begin{align*}
        \\ &\ \sup_{x \in V}g(x)\int_{\{\|\nu\|>1\}} \nu(x)H_1(x,d\nu) 
        + \sup_{x \in V}\frac{g(x)}{v(x)} \int_{\X \backslash \{0\}} \sum_{y \in V \backslash \{x\}}v(y)\nu(y) H_1(x,d\nu) < \infty.
  \end{align*}

  \item[(iv)] The immigration rate-function $\rho: V \times \X \times \X \longrightarrow \R_+$ is given by
  \[
  \rho(x, \eta, \nu) = \sum_{y \in V}\varphi(x,y)\eta(y) + \sum_{y \in V}\psi(x,y)\nu(y)
  \]
  with $\varphi, \psi: V \times V \longrightarrow \R_+$, and $H_2(d\nu)$ is a $\sigma$-finite measure on $\X \backslash \{0\}$ such that
  \begin{align*}
   &\ \sup_{y \in V}\frac{1}{v(y)}\int_{\X \backslash \{0\}} \sum_{x \in V}v(x)\nu(x)\varphi(x,y)H_2(d\nu) < \infty,
   \\ &\ \sum_{y \in V}\int_{\X \backslash \{0\}} \sum_{x \in V}v(x)\nu(x)\psi(x,y)\nu(y) H_2(d\nu) < \infty.
  \end{align*}
 \end{enumerate}  
 Then conditions (A1) -- (A6) are satisfied.
 \end{Proposition}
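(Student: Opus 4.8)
The plan is a direct, if somewhat bookkeeping‑heavy, verification of (A1)--(A6) starting from the explicit forms in (i)--(iv), reading off each of the constants $C_1,\dots,C_6$ from the stated hypotheses. One general remark will be used repeatedly: every double sum and every sum--integral occurring below has non‑negative summands, so Tonelli's theorem applies without comment and the order of summation may be interchanged freely.

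First I would settle (A1)--(A3). For (A1), split the diagonal of $a$ as $a(x,x)=a(x,x)^+-a(x,x)^-$ and put
\[
 B_0(x,\eta)=b(x)+\sum_{y\neq x}a(x,y)\eta(y)+a(x,x)^+\eta(x),\qquad B_1(x,t)=m(x)t^{\lambda}+a(x,x)^- t .
\]
Then $B_0(x,\cdot)\geq 0$ and is non‑decreasing in $\eta$ (because $b\geq 0$ and $a(x,y)\geq 0$ for $y\neq x$), while $B_1(x,\cdot)\geq 0$ is continuous, non‑decreasing, and vanishes at $0$ (for small $\lambda$ one reads $t^{\lambda}$ at $t=0$ so that this holds). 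Affineness of $B_0(x,\cdot)$ gives $B_0(x,\eta)-B_0(x,\xi)=\sum_y\widetilde a(x,y)(\eta(y)-\xi(y))$ with $\widetilde a(x,y)=a(x,y)$ for $y\neq x$ and $\widetilde a(x,x)=a(x,x)^+$; summing against $v(x)$, interchanging the order of summation, and applying \eqref{eq: a drift} yields $\|B_0(\cdot,\eta)-B_0(\cdot,\xi)\|\leq C_1\|\eta-\xi\|$ with $C_1$ independent of $R$. Conditions (A2) and (A3) then hold with $C_2(x)=c(x)$ and $C_3(x)=g(x)$: both $c(x,\cdot)$ and $g(x,\cdot)$ are linear, hence Lipschitz with those constants and vanishing at $0$, $g(x,\cdot)$ is non‑decreasing, and $\sum_x v(x)C_2(x)=\|c\|<\infty$ by (ii).

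Next I would verify (A4) and (A5), which are near‑tautological translations. With $C_3(x)=g(x)$, the three displays in (A4) are, line by line, exactly the three finiteness/supremum statements of (iii), and the two suprema there supply a common constant $C_4$. For (A5) one notes that $\rho(x,\eta,\nu)-\rho(x,\xi,\nu)=\sum_y\varphi(x,y)(\eta(y)-\xi(y))$, the $\psi$‑term dropping out, so that summing against $v(x)\nu(x)$ and interchanging bounds the left‑hand side of (A5) by $\sum_y|\eta(y)-\xi(y)|\int_{\X\setminus\{0\}}\sum_x v(x)\nu(x)\varphi(x,y)H_2(d\nu)\leq C_5\|\eta-\xi\|$ via the first bound of (iv), again with $C_5$ independent of $R$; monotonicity of $\rho(x,\cdot,\nu)$ follows from $\varphi\geq 0$.

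Finally, for (A6) I would write $\|B_0(\cdot,\eta)\|=\|b\|+\sum_x v(x)\sum_y\widetilde a(x,y)\eta(y)$ and bound the double sum by a multiple of $\|\eta\|$ exactly as in the Lipschitz step above via \eqref{eq: a drift}, and then split the immigration integral according to $\rho(x,\eta,\nu)=\sum_y\varphi(x,y)\eta(y)+\sum_y\psi(x,y)\nu(y)$: its $\varphi$‑part equals $\sum_y\eta(y)\int_{\X\setminus\{0\}}\sum_x v(x)\nu(x)\varphi(x,y)H_2(d\nu)\leq C\|\eta\|$ by the first bound of (iv), and its $\psi$‑part equals the finite constant $\sum_y\int_{\X\setminus\{0\}}\sum_x v(x)\nu(x)\psi(x,y)\nu(y)H_2(d\nu)$ from the second bound of (iv). Summing these contributions gives the linear estimate of (A6) with an appropriate $C_6$. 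I do not expect a genuine obstacle here: the only steps that require care are the handling of the diagonal coefficient $a(x,x)$ (keeping $B_0$ non‑negative and monotone while forcing $B_1$ to depend on $\eta$ only through $\eta(x)$) and the interchange‑of‑summation argument that converts \eqref{eq: a drift} into the Lipschitz and linear‑growth bounds for $B_0$.
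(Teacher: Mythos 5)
Your proposal is correct and takes essentially the same approach as the paper: the same decomposition $B=B_0-B_1$ with the diagonal of $a$ split between the two (your $a(x,x)^{\pm}$ is exactly the paper's $\1_{\{a(x,x)\geq 0\}}a(x,x)$ and $\1_{\{a(x,x)<0\}}|a(x,x)|$), the same choice of constants $C_2(x)=c(x)$, $C_3(x)=g(x)$, and the same Tonelli interchanges to convert (i)--(iv) into (A1)--(A6).
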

\begin{proof} 
 Let us write $B(x,\eta) = B_0(x,\eta) - B_1(x,\eta(x))$ with
 \begin{align*}
     B_0(x,\eta) &= b(x) + \sum_{y \in V \backslash \{x\}}a(x,y)\eta(y) + \1_{\{a(x,x)\geq 0\}}a(x,x)\eta(x),
     \\ B_1(x,\eta(x)) &= \1_{\{a(x,x) < 0\}}|a(x,x)|\eta(x) + m(x)\eta(x)^{\lambda}.
 \end{align*}
 Then $B_1$ has the desired properties stated in condition (A1) while $B_0$ satisfies by \eqref{eq: a drift}
 \begin{align}\label{eq: B drift}
  \| B_0(\cdot,\eta)\| \leq \max\{\|b\|,\ C_1 \}(1 + \|\eta\|) \ \text{ and }\ 
  \| B_0(\cdot,\eta) - B_0(\cdot,\xi)\| \leq C_1 \|\eta - \xi\|,
 \end{align}
 and $B_0(x,\eta) \leq B_0(x,\xi)$ for each $x \in V$ whenever $\eta \leq \xi$.  This shows that (A1) is satisfied with $C_1(R) = C_1$. It is straightforward to verify conditions (A2) with $C_2(x) = c(x)$ and (A3) with $C_3(x) = g(x)$. Condition (A4) follows directly from (iii) with 
 \[
  C_4 = \sup_{x \in V}\max\left\{ g(x)\int_{\{\|\nu\|>1\}} \nu(x)H_1(x,d\nu), \ \frac{g(x)}{v(x)} \int_{\X \backslash \{0\}} \sum_{y \in V \backslash \{x\}}v(y)\nu(y) H_1(x,d\nu)\right\}. 
 \]
 Concerning assumption (A5) let us note that
 \begin{align*}
  |\rho(x,\eta, \nu) - \rho(x,\xi, \nu)| \leq \sum_{y \in V}\varphi(x,y)|\eta(y) - \eta(y)|
 \end{align*}
 and hence
 \begin{align*}
  &\ \int_{\X \backslash \{0\}}\sum_{x \in V} v(x)\nu(x)|\rho(x,\eta, \nu) - \rho(x,\xi, \nu)| H_2(d\nu)
  \\ &\leq \sum_{y \in V}|\eta(y) - \eta(y)| \int_{\X \backslash \{0\}}\sum_{x \in V} v(x)\nu(x) \varphi(x,y)H_2(d\nu)
  \leq C_5 \|\eta - \xi\|
 \end{align*}
 where $C_5 = \sup_{y \in V}\frac{1}{v(y)}\int_{\X \backslash \{0\}}\sum_{x \in V}v(x)\nu(x)\varphi(x,y) H_2(d\nu)$. Finally, condition (A6) is satisfied with
 \begin{align*}
  C_6 &= \max \bigg\{\|b\|,\ C_1,\ \sup_{y \in V}\frac{1}{v(y)}\int_{\X \backslash \{0\}} \sum_{x \in V}v(x)\nu(x)\varphi(x,y)H_2(d\nu), 
  \\ &\qquad \qquad \qquad \qquad \qquad \sum_{y \in V}\int_{\X \backslash \{0\}} \sum_{x \in V}v(x)\nu(x)\psi(x,y)\nu(y) H_2(d\nu)  \bigg\}
 \end{align*}
 due to \eqref{eq: B drift} and
 \begin{align*}
  \int_{\X \backslash \{0\}} \sum_{x \in V}v(x) \nu(x) \rho(x,\eta,\nu)H_2(d\nu)
  &\leq \sum_{y \in V}\eta(y) \int_{\X \backslash \{0\}} \sum_{x \in V}v(x)\nu(x) \varphi(x,y)H_2(d\nu)
  \\ &\qquad + \int_{\X \backslash \{0\}}\sum_{x,y \in V} v(x)\nu(x)\psi(x,y)\nu(y)H_2(d\nu)
  \\ &\leq C_6 (1 + \|\nu\|).
 \end{align*}
\end{proof}

The next remark illustrates specific cases when the inequality \eqref{eq: a drift} is satisfied. It follows the scheme provided in \cite{LS81}.
\begin{Remark}
 Let $V = \Z^d$ be equipped with the $1$-norm $|\cdot|_1$. Let $a: V \times V \longrightarrow \R$ and $v$ be given by one of the following cases: \begin{enumerate}
     \item[(i)] $v(x) = e^{-\delta|x|_1}$ and $a(x,y) = c e^{-\e|x-y|_1}$ for $x \neq y$ with $c > 0$ and $0 < \delta < \e$,
     \item[(ii)] $v(x) = e^{-\delta|x|_1}$ and $a(x,y) = c\1_{\{|x-y|_1\leq R\}}$ for $x \neq y$ with $c,R,\delta > 0$,
     \item[(iii)] $v(x) = \frac{1}{1+|x|_1^{\delta}}$ and $a(x,y) = \frac{c}{1+ |x-y|_1^{\e}}$ for $x \neq y$ with $c > 0$ and $d < \delta < \e$.
 \end{enumerate}
 Then there exists a constant $C_1 > 0$ such that \eqref{eq: a drift} holds. 
\end{Remark}
Note that all these examples satisfy condition \eqref{eq: v growth condition}.
Under the conditions of the previous proposition, one may check that the effective drift is given by
\begin{align*}
    \widetilde{B}(x,\eta) &= \widetilde{b}(x) + \sum_{y \in V}\widetilde{a}(x,y)\eta(y) - m(x)\eta(x)^{\lambda}
\end{align*}
where $\widetilde{b}: V \longrightarrow \R_+$ and $\widetilde{a}: V \times V \longrightarrow \R$ are given by
\begin{align*}
    \widetilde{b}(x) &= b(x) + \sum_{y \in V}\psi(x,y)\int_{\X \backslash \{0\}}\nu(x)\nu(y)H_2(d\nu),
    \\ \widetilde{a}(x,y) &= a(x,y) + \1_{\{x \neq y\}}g(y)\int_{\X \backslash \{0\}}\nu(x)H_1(y,d\nu) + \varphi(x,y)\int_{\X \backslash \{0\}}\nu(x)H_2(d\nu).
\end{align*}
Hence \eqref{eq: subcritical drift} is satisfied, provided that $\lambda = 1$ and $\inf_{x \in V}m(x) > C_1$ where $C_1$ is the constant from \eqref{eq: a drift}. Using the previous remark, we may verify such a condition for particular classes of weight functions $v$ and kernels $a$. 

Formulation \eqref{SDE} contains classical multi-type continuous-state branching processes with immigration as a particular case (when $V$ is finite). The next example shows that it also contains their infinite-dimensional analogues as studied in \cite{MR3849816, F22} in terms of Laplace transforms.

\begin{Example}[infinite-type continuous-state branching process with immigration]\label{Example: CBI}
 Assume conditions (i) -- (iii) of Proposition \ref{prop: sufficient conditions} with $m(x) = 0$, and $\rho(x,\eta,\nu) = 0$ with $H_2$ a $\sigma$-finite measure on $\X \backslash \{0\}$ such that $\int_{\X \backslash \{0\}}\|\nu\|H_2(d\nu) < \infty$. Then conditions (A1) -- (A6) are satisfied. The corresponding process is an infinite-type continuous-state branching process with immigration where $V$ denotes the countable set of different types of the population. 
 \begin{enumerate}
     \item[(a)] If there exists, in addition, a constant $A > 0$ such that 
 \[
  \sum_{x \in V}\widetilde{a}(x,y)v(x) \leq - Av(y), \qquad y \in V,
 \]
 then Theorem \ref{thm: invariant measures} is applicable and the process converges to the unique invariant distribution.
    
    \item[(b)] Suppose that $V = \Z^d$ is equipped with the $1$-norm $|\cdot|_1$. If $b(x) = 0$, $H_2(d\nu) = 0$, $v$ satisfies \eqref{eq: v growth condition} with $\mathrm{dist}(x_0,x) = |x|_1$, and there exists $R > 0$ such that $a(x,y) = 0$ holds for $|x-y|_1 > R$, then Theorem \ref{thm: at most linear} is applicable and the process has at most linear growth.
 \end{enumerate} 
\end{Example}

Below we extend this setting to processes with interactions. For the sake of simplicity, we restrict our attention towards cylindrical branching and immigration measures $H_1, H_2$, which constitutes a natural assumption when $V$ contains infinitely many sites. 
\begin{Remark}
 Suppose that the family of measures $(H_1(x,d\nu))_{x \in V}$ and $H_2(d\nu)$ on $\X \backslash \{0\}$ are given by 
  \begin{align*}
  H_1(x,d\nu) &= \int_{(0,\infty)} \delta_{z \delta_x}(d\nu)\mu_{x,x}(dz) + \sum_{y \in V \backslash \{x\}} \int_{(0,\infty)} \delta_{z \delta_y}(d\nu)\mu_{x,y}(dz),
  \\ H_2(d\nu) &= \sum_{x \in V} \int_{(0,\infty)} \delta_{z\delta_x}(d\nu) \sigma_x(dz)
 \end{align*}
 where $(\mu_{x,y})_{x,y \in V}$ and $(\sigma_x)_{x \in V}$ are L\'evy measures on $(0,\infty)$ satisfying 
 \begin{align*}
  &\ \sum_{x \in V}v(x)g(x) \int_{(0,1]}z^2 \mu_{x,x}(dz) < \infty,
  \\ &\ \sup_{x \in V}g(x)\int_{(1,\infty)}z \mu_{x,x}(dz) + \sup_{x \in V}\frac{g(x)}{v(x)} \sum_{y \in V \backslash \{x\}}v(y) \int_{(0,\infty)}z \mu_{x,y}(dz) < \infty.
 \end{align*}
 Then condition (iii) of Proposition \ref{prop: sufficient conditions} is satisfied. Moreover, if 
  \begin{align*}
  &\ \sup_{y \in V}\frac{1}{v(y)}\sum_{x \in V}v(x) \varphi(x,y)\int_{(0,\infty)}z \sigma_x(dz) < \infty
  \\ &\ \sum_{x \in V}v(x) \int_{(0,\infty)}z \sigma_x(dz) + \sum_{x \in V}v(x)\psi(x,x) \int_{(0,\infty)}z^2 \sigma_x(dz) < \infty
 \end{align*}
 hold, then also condition (iv) of Proposition \ref{prop: sufficient conditions} is satisfied.
\end{Remark}
\begin{proof}
 Let us remark that
    \[
    \int_{ \{ \| \nu \| \leq 1\} \backslash \{0\}} \nu(x)^2 H_1(x,d\nu) 
    = \int_{(0,1]} z^2 \mu_{x,x}(dz) < \infty
    \]
    and
    \[
    \int_{ \{\| \nu \| > 1 \}} \nu(x) H_1(x,d\nu)
    = \int_{(1,\infty)} z \mu_{x,x}(dz) < \infty
    \]
    Moreover, it is easy to see that   
    \[
    \int_{\X \backslash \{0\}} \sum_{y \in V \backslash \{x\}} v(y)\nu(y) H_1(x,d\nu) 
    = \sum_{y \in V \backslash \{x\}} v(y) \int_{(0,\infty)}z \mu_{x,y}(dz).
    \]
    This shows that condition (iii) of Proposition \ref{prop: sufficient conditions} is satisfied. Condition (iv) therein follows from 
    \begin{align*}
    \int_{\X \backslash \{0\}} \sum_{x \in V}v(x)\nu(x)\varphi(x,y)H_2(d\nu)
    \leq \sum_{w \in V}v(w)\varphi(w,y)\int_{\X \backslash \{0\}}z \sigma_w(dz)
    \leq C_{\rho}v(y)
    \end{align*}
    for some constant $C_{\rho} > 0$, and similarly
    \begin{align*}
     &\ \sum_{y \in V}\int_{\X \backslash \{0\}}\sum_{x \in V}v(x)\nu(x)\psi(x,y)\nu(y)H_2(d\nu)
     \\ &= \sum_{w \in V}\int_{(0,\infty)}v(w)\psi(w,w)\int_{(0,\infty)}z^2 \sigma_w(dz) < \infty.
    \end{align*}
\end{proof}

Our next example provides an extension of the infinite-type continuous-state branching process from Example \ref{Example: CBI} towards local interactions in the drift. It extends, in particular, \cite{MR4130578} to the infinite-dimensional case.
\begin{Example}[Local branching process with local competition]
 Let $V = \Z^d$. The continuous state branching Brownian motion on $\X$ is given by the strong solution of 
 \begin{align*}
     d\eta_t(x) &= \left( \sum_{y \in V} a(x,y)\eta_t(y) - m(x)\eta(x)^{\lambda}\right)dt + \sqrt{c(x)\eta_t(x)}dB_t(x) 
     \\* &\qquad + \left(g(x)\eta_t(x)\right)^{\frac{1}{\alpha(x)}}dZ_t(x) + dJ_t(x)
 \end{align*}
 where $(B_t(x))_{t \geq 0}$ is family of independent one-dimensional Brownian motions, $(J_t(x))_{t \geq 0}$ is a family of independent L\'evy subordinators on $\R_+$ with L\'evy measures $\sigma_x$ and drift $b(x) \geq 0$, and $(Z_t(x))_{t \geq 0}$ is a family of independent spectrally positive pure-jump L\'evy processes with L\'evy measure 
 \[
  \mu_{\alpha(x)}(dz) = \1_{(0,\infty)}(z) f(x)\frac{dz}{z^{1+\alpha(x)}}
 \]
 where $\alpha(x) \in (1,2)$ and normalization constant 
 \[
  f(x) = \int_0^{\infty}\left(e^{-z} - 1 + z\right)z^{-1-\alpha(x)}dz = \frac{\Gamma(2 - \alpha(x))}{\alpha(x)(\alpha(x)-1)}.
 \]
 By letting $H_1, H_2$ be given as in previous remark with $\mu_{x,x} = \mu_{\alpha(x)}$, $\mu_{x,z} = 0$, and $\rho(x,\eta,\nu) \equiv 1$, it is easy to see that this model is equivalent in law to \eqref{SDE}. Assume $b, \lambda, m \geq 0$, that $a: \Z^d \times \Z^d \longrightarrow \R$ satisfies $a(x,y) \geq 0$ for $x \neq y$, \eqref{eq: a drift} holds, that 
 \begin{align}\label{eq: alpha 1}
  \sum_{x \in \Z^d}v(x)\left(b(x) + c(x) + \int_{(0,\infty)}z \sigma_x(dz) \right) < \infty,
 \end{align}
 and 
 \begin{align}\label{eq: alpha 2}
  \sum_{x \in \Z^d}v(x)g(x)\frac{\Gamma(2 - \alpha(x))}{\alpha(x)(\alpha(x)-1)(2-\alpha(x))} < \infty, \qquad 
  \sup_{x \in \Z^d} g(x) \frac{\Gamma(2 - \alpha(x))}{\alpha(x)(\alpha(x)-1)^2} < \infty.
 \end{align}
 Then conditions (A1) -- (A6) are satisfied.
 \begin{enumerate}
     \item[(a)] If $\lambda = 1$ and there exists $A > 0$ such that 
     \[
      \sum_{x \in V}a(x,y)v(x) \leq - Av(y) + m(y)v(y)
     \]
     then Theorem \ref{thm: invariant measures} is applicable and the process converges to its unique limit distribution. 
     
     \item[(b)] If $b(x) \equiv 0$, $\sigma_x \equiv 0$, $v$ satisfies \eqref{eq: v growth condition} with $\mathrm{dist}(x_0,x) = |x|_1$, and there exists $R > 0$ such that $a(x,y) = 0$ for $|x-y|_1 > R$, then Theorem \ref{thm: at most linear} is applicable. 
 \end{enumerate}
\end{Example}
\begin{proof}
    Let us show that it is a particular case of Proposition \ref{prop: sufficient conditions}. Conditions (i) and (ii) therein are evident. Condition (iii) follows from previous remark combined with $\int_{(0,1]}z^2 \mu_{\alpha(x)}(dz) = \frac{f(x)}{2-\alpha(x)}$ and $\int_{(1,\infty)}z \mu_{\alpha(x)}(dz) = \frac{f(x)}{\alpha(x)-1}$. Hence conditions (A1) -- (A4) are satisfied. Condition (A5) is trivial since $\rho \equiv 1$ while (A6) follows from
    \[
     \int_{\X \backslash \{0\}}\sum_{x \in V}v(x)\nu(x) H_2(d\nu) = \sum_{w \in V}v(w)\int_{(0,\infty)}z \sigma_w(dz) < \infty.
    \]
    Assertions (a) and (b) are left for the reader.
\end{proof}

Assumptions \eqref{eq: alpha 1} and \eqref{eq: alpha 2} are natural to guarantee that $(J_t(x))_{t \geq 0}$ and the stochastic integrals against $(B_t(x))_{t \geq 0}$ and $(Z_t(x))_{t \geq 0}$ take values in $\X$. For constant $g$, \eqref{eq: alpha 2} implies that $\alpha(x)$ is bounded away from $1$. If $v \equiv 1$, then \eqref{eq: alpha 2} also implies that $\alpha(x)$ needs to be bounded away from $2$. However, in general, $\alpha$ may approach $1$ and $2$ provided that the singularities in the denominator are compensated by $g(x)$ and $v(x)$.

Let us close this presentation with two additional discrete examples previously studied in the literature.

\begin{Example}[nearest-neighbor continuous-state branching process with unit jumps]\label{Example nearest neighbors}
    Consider $V = \Z^d$, $c(x, t) \equiv c_0t$, $g(x, t) \equiv g_0t$, and $\rho \equiv 0$, where $c_0, g_0 \geq 0$. Let 
    \[
     B(x,\eta) =  \sum_{y \in \Z ^d:|y-x|=1} \eta(y),
    \]
    $H_1(x, \cdot) = \sum_{y \in \Z^d : |y-x| = 1 } \delta_{\delta_y}$, 
    and $v(x) = e^{-|x|_1}$, where $|x|_1$ is the $\ell^1$ norm of $x \in \Z ^d$.
    This gives a nearest-neighbor continuous-space branching process with unit jumps. It is straightforward to check that (A1)--(A6) are satisfied
    and Theorems \ref{thm existence uniqueness comparison}
    and \ref{thm: at most linear} hold.    
\end{Example}

\begin{Example}[branching random walk]
 Take $V = \Z ^d$ and $v(x) = e^{-|x|_1}$.
 Assuming that for every $x \in V$ the measure $H_1(x,\cdot)$ is concentrated on integer-valued elements of $\X$ and taking $ c \equiv 0$, $\rho \equiv 0$, $g(x, s) \equiv s$, and 
 \[
 B(x, \eta) = \eta (x) \int_{ \{\| \nu \| \geq 1 \}} \nu(x) H_1(x,d\nu).
 \]
 Suppose further that $H_1$ satisfies
 \[
  \sup\limits _{x \in \Z ^d} \int_{ \{\| \nu \| \geq 1 \}} \nu(x) H_1(x,d\nu) + \sup_{x \in \Z^d}e^{-|x|_1}\int_{\X \backslash \{0\}} \sum_{y \in V \backslash \{x\}} e^{-|y|_1}\nu(y) H_1(x,d\nu) =: C < \infty.
 \]
 Then conditions (A1) -- (A6) are satisfied with $C_1 = C_4 = C_6 = C$, and $C_2 = C_3 = C_5 = 0$, and we obtain a continuous-time discrete-space branching random walk (see e.g. \cite{BZ15, BMY21}). The process is inhomogeneous in space and can fit in the framework of  a branching random walk in a random environment (see e.g. \cite{BRWRE}) if the measures $H_1(x, \cdot)$ are additionally randomized. Finally, if
 \[
  \int_{\X \backslash \{0\}}\nu(x) H_1(y,d\nu) = 0, \qquad |x-y|_1 > R
 \]
 holds for some $R > 0$, then Theorem \ref{thm: at most linear} is applicable.
\end{Example}

\section{Non-explosion and first-moment estimate}

A solution $\eta$ of \eqref{SDE} with lifetime $\zeta$ consists of a stopping time $\zeta$ and a process $(\eta_t)_{t \in [0,\zeta)}$ such that \eqref{SDE} is satisfied on $\{t \leq \tau_m(\eta)\}$ for each $m \geq 1$, where 
\begin{align}\label{eq: taum}
 \tau_m(\eta) = \inf\left\{ t \in [0,\zeta) \ | \ \| \eta_t \| > m \right\}.
\end{align}
with the convention $\inf \emptyset = \infty$. Clearly, $\tau_m(\eta)$ is an increasing sequence of stopping times. Let us first prove that each solution of \eqref{SDE} is always conservative.
\begin{Theorem}\label{thm: conservative}
 Suppose that (A1) -- (A4), and (A6) are satisfied. Let $(\eta_t)_{t \in [0,\zeta)}$ be a solution of \eqref{SDE} with lifetime $\zeta$. 
 Let $\tau_m = \tau_m(\eta)$ be the stopping time defined in \eqref{eq: taum}. Then $\tau_m \nearrow \infty$ a.s. as $m \to \infty$.
\end{Theorem}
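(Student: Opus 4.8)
Since $\eta\mapsto\|\eta\|=\sum_{x\in V}v(x)\eta(x)$ is linear on the positive cone $\X$, no It\^o formula is needed: the plan is to multiply the $x$-th equation of \eqref{SDE} by $v(x)$, sum, and extract a Gronwall inequality for $t\mapsto\E[\|\eta_{t\wedge\tau_m}\|]$ whose right-hand side does not depend on $m$; once this is available, Fatou's lemma forces $\tau_\infty:=\lim_{m\to\infty}\tau_m=\infty$ almost surely. Two truncations are used so that every sum is finite and every stochastic integral is a bona fide zero-mean martingale: a finite set $\Lambda\subset V$ over which we sum, and --- since the present statement does not assume $\E[\|\eta_0\|]<\infty$ --- the $\F_0$-measurable events $\Gamma_n:=\{\|\eta_0\|\le n\}$, which exhaust $\Omega$ up to a null set. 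It then suffices to prove $\P(\{\tau_\infty\le T\}\cap\Gamma_n)=0$ for all $T,n>0$.

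\textbf{The Gronwall estimate.} Fix a finite $\Lambda$ and set $S_\Lambda(t):=\sum_{x\in\Lambda}v(x)\eta_t(x)$. Summing \eqref{SDE} against $v(x)$ over $x\in\Lambda$, stopping at $\tau_m$, and taking $\E[\,\cdot\,\1_{\Gamma_n}]$: on $[0,\tau_m]$ one has $\eta_s(x)\le m/v(x)$, so by (A2) the Brownian integrals have bounded quadratic variation and are $L^2$-martingales of mean zero; splitting the compensated integrals against $\widetilde N_x$ into their parts over $\{\|\nu\|\le 1\}$ and $\{\|\nu\|>1\}$, (A3) together with the first and second bounds of (A4) shows the former is an $L^2$- and the latter an $L^1$-martingale, again of mean zero. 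The drift term $-\sum_{x\in\Lambda}v(x)B_1(x,\eta_s(x))\le 0$ is discarded. After interchanging summation in the $N_y$-integrals ($y\ne x$) and using the compensator formula for the nonnegative $N_y$- and $M$-integrands, the residual bound is
\begin{align*}
 \E[S_\Lambda(t\wedge\tau_m)\1_{\Gamma_n}]
 &\le n+\E\bigg[\int_0^{t\wedge\tau_m}\!\Big(\|B_0(\cdot,\eta_s)\|+\sum_{y\in V}g(y,\eta_s(y))\!\!\int_{\X\setminus\{0\}}\!\sum_{x\in V\setminus\{y\}}\!\!v(x)\nu(x)H_1(y,d\nu)\\
 &\qquad\qquad\qquad\quad+\int_{\X\setminus\{0\}}\sum_{x\in V}v(x)\nu(x)\rho(x,\eta_s,\nu)H_2(d\nu)\Big)\1_{\Gamma_n}\,ds\bigg].
\end{align*}
By $g(y,t)\le C_3(y)t$ (from (A3)), the third bound of (A4), and (A6), the bracketed sum is at most $C(1+\|\eta_s\|)$ with $C:=2C_6+C_4$. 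Letting $\Lambda\uparrow V$ by monotone convergence, and using $\int_0^{t\wedge\tau_m}(1+\|\eta_s\|)\,ds\le\int_0^t(1+\|\eta_{s\wedge\tau_m}\|)\,ds$ together with Tonelli, the function $h_m(t):=\E[\|\eta_{t\wedge\tau_m}\|\1_{\Gamma_n}]$ (which also depends on $n$) satisfies
\[
 h_m(t)\le n+Ct+C\int_0^t h_m(s)\,ds .
\]
Since $\|\eta_s\|\le m$ for $s<\tau_m$ we have a priori $h_m(t)\le n+C(1+m)t<\infty$, so Gronwall's lemma applies and yields $h_m(t)\le(n+Ct)e^{Ct}$, a bound independent of $m$.

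\textbf{Conclusion and the main obstacle.} On $\{\tau_\infty\le T\}$ each $\tau_m\le\tau_\infty\le T$ is finite, hence $\tau_m<\zeta$, $T\wedge\tau_m=\tau_m$, and right-continuity of $s\mapsto\|\eta_s\|$ forces $\|\eta_{\tau_m}\|\ge m$; thus $\liminf_m\|\eta_{T\wedge\tau_m}\|=\infty$ on that event. Fatou's lemma then gives
\[
 \infty\cdot\P(\{\tau_\infty\le T\}\cap\Gamma_n)\le\liminf_{m\to\infty}h_m(T)\le(n+CT)e^{CT}<\infty,
\]
so $\P(\{\tau_\infty\le T\}\cap\Gamma_n)=0$; sending $n\to\infty$ and then $T\to\infty$ gives $\tau_m\nearrow\infty$ a.s. The step I expect to be most delicate is the martingale bookkeeping for the jump terms: to be sure that the compensated $\widetilde N_x$-integrals, once summed over the finite $\Lambda$ and stopped at $\tau_m$, are genuine mean-zero martingales one must combine the $L^2$-control of the small jumps $\{\|\nu\|\le1\}$ coming from the first bound of (A4) with the $L^1$-control of the large jumps $\{\|\nu\|>1\}$ coming from the second bound of (A4); and the passage $\Lambda\uparrow V$ must be organized so that the limit involves only the nonnegative (monotone) terms, the martingale contributions having already been removed by taking expectations at finite $\Lambda$.
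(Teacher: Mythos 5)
Your proposal is correct, and it takes a genuinely different (and in places more economical) route than the paper. The paper first establishes the stronger bound $\E\bigl[\sup_{t\in[0,T]}\|\eta_t\|\bigr]<\infty$ via the Burkholder--Davis--Gundy inequality applied to each $\mathcal{M}_t(x)$, and then deduces $\P[\tau_m\le T]\to 0$ by Chebyshev on the running supremum. You avoid BDG entirely: by taking plain expectations in the stopped equation you only need the stochastic integrals to be mean-zero martingales after localization (which the same (A2)--(A4) bounds give you, with no maximal inequality required), you obtain a Gronwall bound for $\E[\|\eta_{t\wedge\tau_m}\|\,\1_{\Gamma_n}]$ uniform in $m$, and you conclude by Fatou applied to $\liminf_m\|\eta_{T\wedge\tau_m}\|$, which equals $+\infty$ on $\{\tau_\infty\le T\}$. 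This is a cleaner proof of the non-explosion statement itself, since the supremum estimate is not actually needed to reach the conclusion. Your additional $\Gamma_n=\{\|\eta_0\|\le n\}$ truncation is also a genuine improvement in rigor: the theorem as stated does not assume $\E[\|\eta_0\|]<\infty$, and the paper's own proof tacitly uses that hypothesis in its final Gronwall bound; your localization makes the argument valid for any $\F_0$-measurable $\eta_0\in\X$. One small technical point worth tightening: the a priori finiteness of $h_m$ needed to invoke Gronwall does not follow directly from $\|\eta_s\|\le m$ for $s<\tau_m$, because a jump at $\tau_m$ itself can push $\|\eta_{\tau_m}\|$ above $m$; one should instead argue (as your martingale bookkeeping already makes possible) that the compensators of the jump terms over $[0,t\wedge\tau_m]$ have finite expectation, whence $h_m(t)<\infty$. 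This is a shared technicality with the paper's argument and does not affect the overall correctness.
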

\begin{proof}
    The definition of $\tau_m$ implies that $\tau_m \leq \tau_{m+1}$ holds for each $m \geq 1$. Define $\sup_{m \geq 1}\tau_m = \tau$. Then we have to prove that $\P[\tau = \infty] = 1$. Fix $T > 0$, then $\P[\tau \leq T] = \lim_{m \to \infty} \P[\tau_m \leq T]$ and hence it suffices to prove that $\P[\tau_m \leq T] \to 0$ as $m \to \infty$. For this purpose, we note that 
    \begin{align}
        \P[ \tau_m \leq T ] \notag
        &= \P\left[ \sup_{t \in [0,T]}\|\eta_t\| > m\right]
        \\ \notag
        &\leq \frac{1}{m}\E\left[ \sup_{t \in [0,T]}\|\eta_t\|\right ]
        \\ &\leq \frac{1}{m}\sum_{x \in V}v(x) \E\left[ \sup_{t \in [0,T]}\eta_t(x) \right]. \label{cull}
    \end{align}
    Let $x \in V$ and let $\mathcal{M}_t(x)$ be the local martingale defined by
\begin{align*}
    \mathcal{M}_t(x) &= \int_0^t \sqrt{2c(x,\eta_s(x))}dW_s(x)
    + \int_0^t \int_{\X \backslash \{0\}}\int_{\R_+} \nu(x) \mathbbm{1}_{ \{ u \leq g(x,\eta_{s-}(x))\} } \widetilde{N}_x(ds,d\nu,du).
\end{align*}
 Using the Burkholder-Davis-Gundy inequality combined with (A2) and (A3), we find that 
\begin{align*}
    &\ \E\left[ \sup_{t \in [0,T \wedge \tau_m]} |\mathcal{M}_t(x)|\right]
    \\ 
    &\leq \left( \E\left[ \sup_{t \in [0,T \wedge \tau_m]}\left| \int_0^t \sqrt{2c(x,\eta_s(x))}dW_s(x) \right|^2 \right] \right)^{1/2}  \notag
    \\
    &\ \ \ + \left( \E\left[ \sup_{t \in [0,T \wedge \tau_m]}\left|\int_0^t \int_{\{\|\nu\| \leq 1\} \backslash \{0\}}\int_{\R_+} \nu(x) \mathbbm{1}_{ \{ u \leq g(x,\eta_{s-}(x))\} } \widetilde{N}_x(ds,d\nu,du) \right|^2 \right] \right)^{1/2} \notag
    \\
    &\ \ \ + \E\left[ \sup_{t \in [0,T \wedge \tau_m]}\left|\int_0^t \int_{\{\|\nu\| > 1 \}}\int_{\R_+} \nu(x) \mathbbm{1}_{ \{ u \leq g(x,\eta_{s-}(x))\} } \widetilde{N}_x(ds,d\nu,du) \right| \right]  \notag
    \\
    &\leq \sqrt{8}\left( \E\left[\int_0^{T} \1_{[0,\tau_m]}(s)c(x,\eta_s(x))ds \right] \right)^{1/2} \notag
    \\
    &\ \ \ + 2 \left( \E\left[ \int_0^{T} \int_{\{\|\nu\| \leq 1\} \backslash \{0\}}\int_{\R_+} \1_{[0,\tau_m]}(s)\nu(x)^2 \mathbbm{1}_{ \{ u \leq g(x,\eta_{s-}(x))\} } \widehat{N}_x(ds,d\nu,du)\right]\right)^{1/2} \notag
    \\
    &\ \ \ + 2 \E\left[ \int_0^{T} \int_{\{\|\nu\| > 1 \}}\int_{\R_+} \1_{[0,\tau_m]}(s)\nu(x) \mathbbm{1}_{ \{ u \leq g(x,\eta_{s-}(x))\} } \widehat{N}_x(ds,d\nu,du)  \right]  \notag
    \\
    &\leq \sqrt{8}\sqrt{C_2(x)}\left( \int_0^T \E\left[ \sup_{r \in [0,s \wedge \tau_m]}\eta_r(x) \right] ds \right)^{1/2} \notag
    \\
    &\ \ \ + 2 \sqrt{C_3(x)} \left( \int_{\{\|\nu\| \leq 1\} \backslash \{0\}} \nu(x)^2 H_1(x,d\nu) \right)^{1/2}\left( \int_0^{T} \E\left[ \sup_{r \in [0,s\wedge \tau_m]}\eta_r(x) \right] ds \right)^{1/2} \notag
    \\
    &\ \ \ + 2 C_3(x) \left( \int_{\{\|\nu\| > 1 \}}\nu(x) H_1(x,d\nu) \right) \int_0^T \E\left[ \sup_{r \in [0,s \wedge \tau_m]}\eta_r(x) \right] ds  \notag
\end{align*}
 Define $f_m(t;x) = \E\left[ \sup_{s \in [0,t \wedge \tau_m]} \eta_s(x)\right]$. Multiplying by $v(x)$ and summing over $x \in V$ we obtain
\begin{align*}
  &\ \sum_{x \in V}v(x)\E\left[ \sup_{t \in [0,T\wedge \tau_m]}|\mathcal{M}_t(x)| \right]
    \leq \sqrt{8} \sum_{x \in V}v(x)\sqrt{C_2(x)}\left( \int_0^T f_m(s;x) ds \right)^{1/2} 
    \\ &\ \ \ + 2 \sum_{x \in V}v(x)\sqrt{C_3(x)} \left( \int_{\{\|\nu\| \leq 1\} \backslash \{0\}} \nu(x)^2 H_1(x,d\nu) \right)^{1/2}\left( \int_0^{T} f_m(s;x) ds \right)^{1/2} 
    \\ &\ \ \ + 2 \sum_{x \in V} v(x) C_3(x) \left( \int_{\{\|\nu\| > 1 \}}\nu(x) H_1(x,d\nu) \right) \int_0^T f_m(s;x)ds 
    \\ &\leq \sqrt{8} \left(\sum_{y \in V}v(y)C_2(y) \right) \int_0^T \sum_{x \in V}v(x)f_m(s;x) ds  
    \\ &\ \ \ + 2 \left(\sum_{y \in V}v(y)C_3(y) \int_{\{\|\nu\| \leq 1\} \backslash \{0\}} \nu(y)^2 H_1(y,d\nu) \right) \int_0^{T} \sum_{x \in V}v(x)f_m(s;x) ds 
    \\ &\ \ \ + 2 C_4 \int_0^T \sum_{x \in V}v(x)f_m(s;x)ds 
    \\ &=: c_0 \int_0^T \sum_{x \in V}v(x)f_m(s;x)ds
\end{align*}
with a constant $c_0 \in (0,\infty)$. Using \eqref{SDE} combined with (A6), we find that
\begin{align*}
    &\ \E\left[ \sup_{t \in [0,T \wedge \tau_m]}\eta_t(x)\right] 
    \\ &\leq \E[\eta_0(x)] + \E\left[ \sup_{t \in [0,T\wedge \tau_m]}|\mathcal{M}_t(x)| \right] + \E\left[ \int_0^{T} \1_{[0,\tau_m]}(s)|B_0(x,\eta_{s})|ds\right] 
    \\ &\ \ \ + \sum_{y \in V \backslash \{x\}} \E\left[\int_0^{T} \int_{\X \backslash \{0\}}\int_{\R_+} \1_{[0,\tau_m]}(s)\nu(x) \mathbbm{1}_{ \{ u \leq g(y,\eta_{s-}(y))\} } N_y(ds,d\nu,du) \right] 
    \\ &\ \ \ + \E\left[\int_{0}^{T} \int_{\X \backslash \{0\}} \int_{\R_+}\1_{[0,\tau_m]}(s)\nu(x) \mathbbm{1}_{ \{ u \leq \rho(x,\eta_{s-},\nu)\} } M(ds,d\nu,du) \right]
    \\ &\leq \E[\eta_0(x)] + \E\left[ \sup_{t \in [0,T\wedge \tau_m]}|\mathcal{M}_t(x)| \right] + \int_0^{T} \E\left[\1_{[0,\tau_m]}(s)|B_0(x,\eta_{s})|\right] ds
    \\ &\ \ \ + \sum_{y \in V \backslash \{x\}} C_3(y)\left( \int_{\X \backslash \{0\}} \nu(x) H_1(y,d\nu) \right)\int_0^T f_m(s;y) ds
    \\ &\ \ \ + \int_0^T \int_{\X \backslash \{0\}} \nu(x) \E\left[ \1_{[0,\tau_m]}(s) \rho(x,\eta_{s}, \nu) \right] H_2(d\nu) ds.
\end{align*}
After multiplying the last inequality by $v(x)$ and taking the sum over $x$.
We are going to estimate the last three terms separately.
Recall that $\|B_0(\cdot,\eta)\|=\sum_{x\in V}v(x)B_0(x,\eta)$.
For the first one we obtain by (A6)
\begin{align*}
    \sum_{x \in V}v(x)\int_0^{T} \E\left[\1_{[0,\tau_m]}(s)|B_0(x,\eta_{s})|\right] ds
    &= \int_0^{T} \E\left[\1_{[0,\tau_m]}(s)\|B_0(\cdot,\eta_{s})\|\right] ds
    \\ &\leq C_6T + C_6\int_0^T \sum_{x \in V}v(x)f_m(s;x)ds.
\end{align*}
The second term gives by (A4)
\begin{align*}
   &\ \sum_{x \in V}v(x)\sum_{y \in V \backslash \{x\}} C_3(y)\left( \int_{\X \backslash \{0\}} \nu(x) H_1(y,d\nu) \right)\int_0^T f_m(s;y) ds
   \\ &= \sum_{y \in V}C_3(y)\left( \int_{\X \backslash \{0\}} \sum_{x \in V \backslash \{y\}} v(x)\nu(x) H_1(y,d\nu) \right)\int_0^T f_m(s;y) ds
   \\ &\leq C_4 \int_0^T \sum_{y \in V}v(y)f_m(s;y)ds.
\end{align*}
Finally, for the last term, we obtain by (A6)
\begin{align*}
    &\ \sum_{x \in V}v(x)\int_0^T \int_{\X \backslash \{0\}} \nu(x) \E\left[ \1_{[0,\tau_m]}(s) \rho(x,\eta_{s}, \nu) \right] H_2(d\nu) ds
    \\ &= \int_0^T \E\left[ \1_{[0,\tau_m]}(s) \int_{\X \backslash \{0\}} \sum_{x \in V}v(x)\nu(x) \rho(x,\eta_{s}, \nu) H_2(d\nu)  \right] ds
    \\ &\leq C_6T +  C_6\int_0^T \sum_{x \in V}v(x)f_m(s;x) ds.
\end{align*}
 Together these inequalities yield
 \begin{align*}
     \sum_{x \in V}v(x) f_m(T;x) 
     &\leq \E[\|\eta_0\|] + 2C_6T + \left(c_0 + 2C_6 + C_4\right)\int_0^T \sum_{x \in V}v(x)f_m(s;x)ds.
 \end{align*}
 The Gronwall inequality yields
 \[
  \sum_{x \in V}v(x) f_m(T;x) \leq \left(\E[\|\eta_0\|] + 2C_6T\right)e^{\left(c_0 + 2C_6 + C_4\right)T}.
 \]
 Letting $m \to \infty$ and using Fatou's lemma gives
 \begin{align*}
     \sum_{x \in V}v(x) \E\left[ \sup_{t \in [0,T]}\eta_t(x) \right]
     &\leq \sup_{m \geq 1} \sum_{x \in V}v(x) f_m(T;x) 
     \\ &\leq \left(\E[\|\eta_0\|] + 2C_6T\right)e^{\left(c_0 + 2C_6 + C_4\right)T} < \infty.
 \end{align*}
 With this, the statement of the theorem follows from \eqref{cull}.
\end{proof}

Next we prove a simple but useful observation used for the localization of coefficients.
\begin{Lemma}\label{lemma: localization}
 Let $(\Omega, \F, (\F_t)_{t \geq 0}, \P)$ be a stochastic basis with the usual conditions and let $(\eta_t)_{t \geq 0}$ be an $(\F_t)_{t \geq 0}$-adapted cadlag process such that $\|\eta_t\| < \infty$ holds a.s. Define $\tau_m(\eta)$ as in \eqref{eq: taum} with $\zeta = + \infty$. Then $(\tau_m(\eta))_{m \in \N}$ is an increasing sequence of stopping times satisfying $\tau_m(\eta) \nearrow \infty$ a.s. as $m \to \infty$.
 Finally, one has $\| \eta_{t-}\| \leq m$ and $\eta_{t-}(x) \leq \frac{m}{v(x)}$ a.s. for each $t \in [0, \tau_m]$ and $x \in V$.
\end{Lemma}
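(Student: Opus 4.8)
The plan is to reduce everything to the one-dimensional cadlag process $N_t := \|\eta_t\|$. Since $\|\cdot\| = d(\cdot,0)$ is continuous on $(\X,d)$ and $(\eta_t)_{t \ge 0}$ is cadlag and adapted, $(N_t)_{t\ge 0}$ is a real-valued, $(\F_t)_{t\ge 0}$-adapted cadlag process which is finite on an a.s. event; moreover $\tau_m(\eta) = \inf\{t \ge 0 : N_t > m\}$ is exactly the first entrance time of $N$ into the open set $(m,\infty)$. Monotonicity is immediate: $\{t : N_t > m+1\} \subseteq \{t : N_t > m\}$, so passing to infima gives $\tau_m(\eta) \le \tau_{m+1}(\eta)$. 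For the stopping-time property I would invoke the standard first-entrance-into-an-open-set argument: using right-continuity of $N$ one verifies the identity $\{\tau_m(\eta) < t\} = \bigcup_{s \in \mathbb{Q}\cap[0,t)}\{N_s > m\}$, whose right-hand side belongs to $\F_t$ by adaptedness; since the filtration satisfies the usual conditions, $\{\tau_m(\eta) \le t\} = \bigcap_{n\ge 1}\{\tau_m(\eta) < t+\tfrac1n\} \in \F_{t+} = \F_t$, so each $\tau_m(\eta)$ is a stopping time.

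Next I would establish $\tau_m(\eta) \nearrow \infty$ a.s. Set $\tau := \sup_m \tau_m(\eta) = \lim_{m\to\infty}\tau_m(\eta)$ and fix $T > 0$; it suffices to show $\P[\tau \le T] = 0$ and then let $T \to \infty$ along integers. Work on the a.s. event on which $s \mapsto N_s$ is a finite cadlag path. On $\{\tau \le T\}$ every $\tau_m(\eta)$ is finite and $\le T$, and since $\tau_m(\eta)$ is the infimum of $\{s: N_s > m\}$ there are $s_n \downarrow \tau_m(\eta)$ with $N_{s_n} > m$; right-continuity of $N$ then yields $N_{\tau_m(\eta)} \ge m$. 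Consequently, on $\{\tau \le T\}$ we would have $\sup_{s\in[0,T]} N_s \ge m$ for all $m$, hence $\sup_{s\in[0,T]} N_s = \infty$. But a cadlag real-valued function is bounded on the compact interval $[0,T]$, so $\sup_{s\in[0,T]} N_s < \infty$ on our a.s. event. This contradiction forces $\P[\tau \le T] = 0$.

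For the last assertion, note first that by the very definition of $\tau_m(\eta)$ as an infimum, $N_s = \|\eta_s\| \le m$ for every $s < \tau_m(\eta)$. Fix $t \in (0,\tau_m(\eta)]$; every $s < t$ satisfies $s < \tau_m(\eta)$, hence $\|\eta_s\| \le m$, and letting $s \uparrow t$ — using that $\eta_s \to \eta_{t-}$ in $(\X,d)$ together with continuity of $\|\cdot\|$ — gives $\|\eta_{t-}\| = \lim_{s\uparrow t}\|\eta_s\| \le m$ (the case $t = 0$ being trivial under the convention $\eta_{0-} := \eta_0$, and non-vacuous only when $\|\eta_0\| \le m$). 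Since all coordinates are non-negative, $v(x)\eta_{t-}(x) \le \sum_{y\in V} v(y)\eta_{t-}(y) = \|\eta_{t-}\| \le m$, i.e. $\eta_{t-}(x) \le m/v(x)$ for every $x \in V$, as claimed.

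The only points requiring (mild) care are the measurability bookkeeping — that $N = \|\eta_\cdot\|$ inherits the cadlag-adapted property from $\eta$, and the passage $\{\tau_m(\eta) < t\}\in\F_t \Rightarrow \{\tau_m(\eta) \le t\}\in\F_t$ via right-continuity of the filtration — together with the standard fact that a cadlag path is bounded on each compact interval, which is precisely the ingredient driving the non-explosion $\tau_m(\eta) \nearrow \infty$. I do not anticipate a genuine obstacle beyond these routine verifications.
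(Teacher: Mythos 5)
Your proof is correct, and it follows the same overall skeleton as the paper's: monotonicity of $\tau_m$ by set inclusion, non-explosion by fixing $T$ and pushing $m\to\infty$, and the last assertion by taking left limits and peeling off one non-negative coordinate. The one place where you genuinely diverge is the non-explosion step. The paper asserts the identity $\P[\tau_m(\eta)>t]=\P[\|\eta_t\|\le m]$ and then lets $m\to\infty$, but as literally written this is only a one-sided inclusion: $\{\tau_m(\eta)>t\}\subseteq\{\|\eta_t\|\le m\}$, while the converse fails (the process can exceed level $m$ before $t$ and come back). What actually holds is the sandwich $\{\sup_{s\le t}\|\eta_s\|<m\}\subseteq\{\tau_m(\eta)>t\}\subseteq\{\sup_{s\le t}\|\eta_s\|\le m\}$, and one then needs the standard fact that a cadlag path is bounded on compacts to conclude $\P[\tau_m(\eta)>t]\to1$. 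Your argument — that on $\{\tau\le T\}$ right-continuity forces $N_{\tau_m}\ge m$ for every $m$, contradicting boundedness of the cadlag path $N$ on $[0,T]$ — supplies exactly that missing ingredient and is the more careful version of what the paper intends. The rest (the debut-time argument for the stopping property using the rational-time union and $\F_{t+}=\F_t$, and the left-limit passage for the final bounds) matches the paper's reasoning, which it states more tersely.
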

\begin{proof}
 Since $(\F_t)_{t \geq 0}$ is right-continuous and $(\eta_t)_{t \geq 0}$ has cadlag paths, it follows that $\tau_m(\eta)$ is an $(\F_t)_{t \geq 0}$-stopping time.
 Next observe that, by definition, $\tau_m(\eta) \leq \tau_{m+1}(\eta)$ holds a.s. for each $m \in \N$. Let
 $\tau(\eta) := \sup_{m \in \N} \tau_m(\eta)$.
 Then for all $t > 0$ we obtain 
 \begin{align*}
   \P[ \tau(\eta) > t] &= \lim_{m \to \infty} \P[ \tau_m(\eta) > t]
   \\ &= \lim_{m \to \infty} \P[ \| \eta_{t} \| \leq m ] 
   = \P[ \| \eta_{t} \| < \infty ] =  1.
 \end{align*} 
 Letting $t \to \infty$ yields $\tau(\eta) = \infty$ a.s.. 
 The property $\| \eta_{t-}\| \leq m$ for $t \in [0,\tau_m]$ holds by definition of $\tau_m$, while the second inequality follows from
 \begin{displaymath}
  \eta_{t-}(x) \leq \frac{1}{v(x)} \| \eta_{t-}\| \leq \frac{m}{v(x)}.
 \end{displaymath}
\end{proof}
Finally, we prove a first-moment estimate for the solutions of \eqref{SDE} with parameters depending locally uniformly on the constants appearing in (A1) -- (A6).
\begin{Theorem}\label{first_moment_estimate}
 Suppose that (A1) -- (A4), and (A6) are satisfied. Then there exists a constant $C > 0$ such that each weak solution $(\eta_t)_{t \geq 0}$ of \eqref{SDE} satisfies 
 \begin{align}\label{unverkennbar = unmistakable}
     \E[ \| \eta_t \| ] \leq \left(1 + \E[ \| \eta_0 \| ]\right) e^{Ct}, \qquad t \geq 0.
 \end{align}
 Furthermore, let $(\xi_t)_{t \geq 0}$ be a weak solution to \eqref{SDE} with different functions $\tilde B, \tilde B_0, \tilde B_1, \tilde c, \tilde g, 
 \tilde \rho$ instead of $ B, B_0, B_1,  c,  g, \rho$, respectively. Assume that (A1) -- (A4) and (A6) are satisfied for $\tilde B, \tilde B_0, \tilde B_1, \tilde c, \tilde g, \tilde \rho$, and for all $\alpha, \beta \in \X$, 
$\alpha \leq \beta$, $x \in V$, and $0 \leq s \leq t$ we have
 $\tilde B_0(x, \alpha) \leq B_0(x, \beta)$, $\tilde c(x,s ) \leq c(x,t)$, $\tilde g(x,s) \leq g(x,t)$, $\tilde \rho(x, \alpha, \nu) \leq \rho(x, \beta, \nu)$. Then
 \begin{align}\label{bieder = tame}
     \E[ \| \xi_t \| ] \leq \left(1 + \E[ \| \xi_0 \| ]\right) e^{Ct}, \qquad t \geq 0.
 \end{align}
 with the same constant $C$ as in \eqref{unverkennbar = unmistakable}.
\end{Theorem}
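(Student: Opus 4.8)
The plan is to run the localization argument from the proof of Theorem~\ref{thm: conservative}, but to track the bare first moment $\phi(t):=\E[\|\eta_t\|]$ instead of its running supremum. This is cleaner here because the martingale parts of \eqref{SDE} then simply drop out under the expectation, and---crucially for the second assertion---the resulting constant will depend only on the data $C_3(\cdot),C_4,C_6$ that are controlled through the monotone domination hypotheses.

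First I would record that, by Theorem~\ref{thm: conservative}, the solution is non-explosive ($\tau_m=\tau_m(\eta)\nearrow\infty$) and, from its proof, $\E[\sup_{s\le t}\|\eta_s\|]<\infty$, so $\phi(t)<\infty$ for every $t$. Fix $m$; evaluating \eqref{SDE} at $x\in V$ and time $t\wedge\tau_m$ and taking expectations, the Brownian integral and the $\widetilde N_x$-integral vanish, being true martingales on $[0,t\wedge\tau_m]$ since $\eta_s(x)\le m/v(x)$ there bounds $c(x,\eta_s(x))$ and $g(x,\eta_s(x))$ via (A2)--(A3); the two remaining non-negative jump integrals may be replaced by their compensators by Tonelli. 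Using $B=B_0-B_1\le B_0$ from (A1), multiplying by $v(x)$, summing over $x$, and letting $m\to\infty$---dominated convergence on the left through $\|\eta_{t\wedge\tau_m}\|\le\sup_{s\le t}\|\eta_s\|$, monotone convergence on the right, and the reindexing $\sum_x\sum_{y\ne x}=\sum_y\sum_{x\ne y}$ already used for Theorem~\ref{thm: conservative}---I would arrive at
\begin{align*}
 \phi(t)&\le \E[\|\eta_0\|] + \E\int_0^t\Big(\|B_0(\cdot,\eta_s)\| + \int_{\X\backslash\{0\}}\sum_x v(x)\nu(x)\rho(x,\eta_s,\nu)H_2(d\nu)\Big)ds
 \\ &\quad + \E\int_0^t\sum_{y}g(y,\eta_s(y))\int_{\X\backslash\{0\}}\sum_{x\ne y}v(x)\nu(x)H_1(y,d\nu)\,ds.
\end{align*}
By (A3) (with $g(y,0)=0$) and (A4) the $H_1$-term is at most $C_4\|\eta_s\|$, while by (A6) the sum of the other two is at most $C_6(1+\|\eta_s\|)$; hence $\phi(t)\le \E[\|\eta_0\|]+C_6 t+(C_4+C_6)\int_0^t\phi(s)\,ds$, and Gronwall's inequality together with $1+C_6 t\le e^{C_6 t}$ gives \eqref{unverkennbar = unmistakable} with $C:=C_4+2C_6$.

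For the second assertion, I would observe that $(\xi_t)$ is also non-explosive with finite running-supremum moments by Theorem~\ref{thm: conservative}, so the same scheme applies to it and produces the analogous inequality with $\tilde B_0,\tilde g,\tilde\rho$ replacing $B_0,g,\rho$ (the Poisson measures $H_1,H_2$ being unchanged). Specializing the domination hypotheses to $\alpha=\beta$ and $s=t$ gives the pointwise bounds $\tilde B_0(x,\cdot)\le B_0(x,\cdot)$, $\tilde g(x,\cdot)\le g(x,\cdot)$, $\tilde\rho(x,\cdot,\nu)\le\rho(x,\cdot,\nu)$, together with $\tilde B_1\ge0$ from (A1); since $B_0,g,\rho$ still obey the bounds in (A3), (A4), (A6) with the original constants $C_3(\cdot),C_4,C_6$, the three estimates of the previous paragraph carry over verbatim. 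Thus $\E[\|\xi_t\|]$ satisfies the identical differential inequality $\E[\|\xi_t\|]\le\E[\|\xi_0\|]+C_6 t+(C_4+C_6)\int_0^t\E[\|\xi_s\|]\,ds$, and Gronwall yields \eqref{bieder = tame} with the very same $C=C_4+2C_6$.

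The one genuinely delicate point is the localization-and-limit passage: justifying that the stochastic integrals are honest martingales on each interval $[0,t\wedge\tau_m]$ (this is where $\eta_s(x)\le m/v(x)$ and (A2)--(A3) enter), and then interchanging $\lim_m$, $\sum_{x\in V}$, and $\E$---for which the non-explosion $\tau_m\nearrow\infty$ and the bound $\E[\sup_{s\le t}\|\eta_s\|]<\infty$ supplied by Theorem~\ref{thm: conservative} are precisely what is needed. Everything else is the bookkeeping already performed in the proof of Theorem~\ref{thm: conservative}.
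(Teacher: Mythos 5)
Your proof is correct and follows essentially the same route as the paper's: localize by the stopping times $\tau_m$, verify the stochastic integrals are honest martingales on $[0,t\wedge\tau_m]$ via (A2)--(A3) and the bound $\eta_s(x)\leq m/v(x)$, take expectations so the martingale parts drop out, reduce the non-negative jump integrals to their compensators, and apply (A3)/(A4)/(A6) to close a Gronwall inequality; the second assertion then follows by substituting the dominated tilde coefficients and observing the resulting constants are unchanged. The only superficial difference is organizational: you leave the $N_y$- and $M$-integrals uncompensated and invoke Tonelli to replace them by their compensators in expectation, whereas the paper folds their compensated versions into the martingale $\mathcal{M}_t(x)$; these are equivalent.
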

\begin{proof}
 The main part of the proof is establishing \eqref{unverkennbar = unmistakable}. Let $\tau_m(\eta)$ be the stopping time defined in Lemma \ref{lemma: localization}. Observe that
  \begin{align*}
     \eta_t(x) &= \eta_0(x) + \int_0^t \left( B(x,\eta_s)
     + \sum_{y \in V \backslash \{x\}} \int_{\X \backslash \{0\}} \int_{\R_+} \nu(x) \1_{ \{ u \leq g(y,\eta_{s-}(y))\} } H_1(y,d\nu)du \right)ds
 \\ &\ \ \ + \int_{0}^t \int_{\X \backslash \{0\}} \int_{\R_+}\nu(x) \mathbbm{1}_{ \{ u \leq \rho(x, \eta_{s-},\nu)\} } M(ds,d\nu,du) + \mathcal{M}_t(x)
 \end{align*}
 with $(\mathcal{M}_t(x))_{t \geq 0}$ given by
 \begin{align*}
  \mathcal{M}_{t}(x) &:= \int_0^t \sqrt{2c(x,\eta_s(x))}dW_s(x)
  \\ &\ \ \ + \int_0^t \int_{ \{ \| \nu \| \leq 1\} \backslash \{0\}}\int_{\R_+} \nu(x) \mathbbm{1}_{ \{ u \leq g(x,\eta_{s-}(x))\} } \widetilde{N}_x(ds,d\nu,du)
  \\ &\ \ \ + \int_0^t \int_{ \{ \| \nu \| > 1 \}}\int_{\R_+} \nu(x) \mathbbm{1}_{ \{ u \leq g(x,\eta_{s-}(x))\} } \widetilde{N}_x(ds,d\nu,du)
  \\ &\ \ \ + \sum_{y \in V \backslash \{x\}}\int_0^t \int_{\X \backslash \{0\}}\int_{\R_+} \nu(x) \mathbbm{1}_{ \{ u \leq g(y,\eta_{s-}(y))\} } \widetilde{N}_y(ds,d\nu,du).
 \end{align*}
 Next we prove that $(\mathcal{M}_{t \wedge \tau_m}(x))_{t\geq 0}$ is a martingale for each $m \geq 1$ and $x \in V$. Indeed, the first two terms are square integrable martingales, since by (A2) we have
 \begin{align}
 \notag
     \E\left[ \left| \int_0^{t \wedge \tau_m} \sqrt{2 c(x,\eta_s(x))} dW_s(x)\right|^2 \right ]
     &= \E \left[ \int_0^{t \wedge \tau_m} 2 c(x,\eta_s(x)) ds \right]
     \\ &\leq 2C_2(x)\frac{m}{v(x)} t < \infty,
     \label{der Teich = pond2}
 \end{align}
 and by (A3) and (A4) also
 \begin{align*}
     &\ \E\left[ \int_0^{t \wedge \tau_m} \int_{ \{ \| \nu \| \leq 1\} \backslash \{0\}}\int_{\R_+} \left|\nu(x) \mathbbm{1}_{ \{ u \leq g(x,\eta_{s-}(x))\} } \right|^2 duH_1(x,d\nu)ds\right]
     \\
     &= \E\left[ \int_0^{t \wedge \tau_m} \int_{ \{ \| \nu \| \leq 1\} \backslash \{0\}}\nu(x)^2 g(x,\eta_{s-}(x))  H_1(x,d\nu)ds \right]
     \\
     &\leq C_3(x) t \int_{ \{ \| \nu \| \leq 1\} \backslash \{0\}}\nu(x)^2 H_1(x,d\nu) < \infty.
 \end{align*}
 The third term is a martingale due to (A4) and
 \begin{align*}
     &\ \E\left[ \int_0^{t \wedge \tau_m} \int_{ \{ \| \nu \| > 1\}}\int_{\R_+} \left|\nu(x) \mathbbm{1}_{ \{ u \leq g(x,\eta_{s-}(x))\} } \right|  du H_1(x,d\nu)ds\right]
     \\
     &\leq  C_3(x)t \int_{ \{ \| \nu \| > 1\}} \nu(x) H_1(x,d\nu) < \infty.
 \end{align*}
 Finally, the last term is a martingale since by (A3)
 \begin{align*}
  &\ \sum_{y \in V \setminus \{x\}} \E\left[ \left| \int_0^{t \wedge \tau_m} \int_{\X \backslash \{0\}}\int_{\R_+} \nu(x) \mathbbm{1}_{ \{ u \leq g(y,\eta_{s-}(y))\} } \widetilde{N}_y(ds,d\nu,du) \right| \right]
  \\
  &\leq 2 \sum_{y \in V \setminus \{x\} } \E\left[ \int_{0}^{t \wedge \tau_m} \int_{\X \backslash \{0\} } \nu(x) g(y,\eta_{s-}(y)) H_1(y,d\nu) ds\right]
  \\
  &\leq \frac{2 }{v(x)} \sum_{y\in V \setminus \{x\} }C_3(y) \E\left[ \int_0^{t \wedge \tau_m}\eta_s(y)ds \right] \int_{\X \setminus \{0\} } \nu(x)v(x) H_1(y,d\nu) 
  \\
  &\leq \frac{2}{v(x)}\sum_{y \in V \setminus \{x\}} C_3(y) \E\left[ \int_0^{t \wedge \tau_m}\eta_s(y)ds \right] \int_{\X \setminus \{0\} } \sum_{w \in V \setminus \{y\}}\nu(w)v(w) H_1(y,d\nu) 
  \\
  &\leq \frac{C_4}{v(x)}\sum_{y \in V \setminus \{x\}} \E\left[ \int_0^{t \wedge \tau_m}\eta_s(y)ds \right]v(y)
  \\
  &\leq \frac{C_4}{v(x)}\E\left[ \int_0^{t \wedge \tau_m}\|\eta_s\| ds \right]
  \\
  &\leq \frac{C_4 m}{v(x)}t < \infty.
 \end{align*}
 This proves that $(\mathcal{M}_{t \wedge \tau_m}(x))_{t \geq 0}$ is a martingale.
 Hence taking expectations and using optimal stopping for integrable martingales, gives
 \begin{align*}
     \E[ \eta_{t \wedge \tau_m}(x) ] &= \E[ \eta_0(x) ] + \E\left[ \int_0^{t \wedge \tau_m} ( B_0(x,\eta_s) - B_1(x,\eta_s(x))) ds \right]
     \\ &\ \ \ + \E\left[\sum_{y \in V \backslash \{x\}} \int_{\X \backslash \{0\})} \nu(x) g(y,\eta_{s-}(y)) H_1(y,d\nu) \right]
     \\ &\ \ \ + \E\left[ \int_0^{t \wedge \tau_m} \int_{\X \backslash \{0\}} \nu(x) \rho(x,\eta_{s-}, \nu) H_2(d\nu)ds \right]
     \\ &\leq \E[ \eta_0(x) ] + \E\left[ \int_0^{t \wedge \tau_m} B_0(x,\eta_s)  ds \right]
     \\ &\ \ \ + \E\left[\sum_{y \in V \backslash \{x\}} \int_{\X \backslash \{0\}} \nu(x) g(y,\eta_{s-}(y)) H_1(y,d\nu) \right]
     \\ &\ \ \ + \E\left[ \int_0^{t \wedge \tau_m} \int_{\X \backslash \{0\}} \nu(x) \rho(x,\eta_{s-}, \nu) H_2(d\nu)ds \right]
 \end{align*}
 where we have used that $B_1$ is non-decreasing so that $B_1(x,\eta(x)) \geq B_1(x,0) = 0$. This yields by (A3), (A4), and (A6)
\begin{align*}
   \E\left[ \| \eta_{t\wedge \tau_m} \|  \right]
   &= \sum_{x \in V}v(x) \E[  \eta_{t \wedge \tau_m}(x) ]
     \\ &\leq \E[ \| \eta_0 \| ] + \E\left[ \int_0^{t \wedge \tau_m} \sum_{x \in V} v(x)B_0(x, \eta_s) ds \right]
     \\ &\ \ \ + \E\left[\sum_{x \in V}\sum_{y \in V \backslash \{x\}} \int_{\X \backslash \{0\}} v(x)\nu(x) g(y,\eta_{s-}(y)) H_1(y,d\nu) \right]
     \\ &\ \ \ + \E \left[ \int_0^{t \wedge \tau_m} \int_{\X \backslash \{0\} } \sum_{x \in V}v(x)\nu(x) \rho(x,\eta_{s-},\nu) H_2(d\nu) ds \right]
     \\ &\leq \E[ \| \eta_0 \| ] + 2C_6 t + (2C_6 + C_4) \int_0^t \E[ \| \eta_{s \wedge \tau_m}\|]ds
 \end{align*}
 where we have used (A3) and (A4) so that
 \begin{align*}
  &\ \sum_{x \in V} \sum_{y \in V \backslash \{x\}} \int_{\X \backslash \{0\}} v(x)\nu(x) g(y,\eta_{s-}(y)) H_1(y,d\nu)
  \\ &= \sum_{y \in V} g(y, \eta_{s-}(y))   \int_{\X \backslash \{0\}} \sum_{x \in V \backslash \{y\}} \nu(x)v(x) H_1(y,d\nu)
  \\ &\leq \sum_{y \in V} \frac{C_4 v(y)}{C_3(y)} g(y, \eta_{s-}(y)) 
  \\ &\leq C_4 \| \eta_{s-}\|.
 \end{align*}
  Inequality \eqref{unverkennbar = unmistakable} now follows from the Gronwall lemma.
  The proof of \eqref{bieder = tame} follows exactly the same path, we just need to replace $\tilde B, \tilde B_0, \tilde B_1, \tilde c, \tilde g, 
  \tilde \rho$ with $ B,  B_0,  B_1,  c, g, \rho$ respectively along the way. 
   For example, instead of \eqref{der Teich = pond2} we write
   \begin{align*}
     \E\left[ \left| \int_0^{t \wedge \tau_m} \sqrt{2 \tilde c(x,\xi_s(x))} dW_s(x)\right|^2 \right ]
     &= \E \left[ \int_0^{t \wedge \tau_m} 2 \tilde  c(x,\xi_s(x)) ds \right]
     \\
     &\leq \E \left[ \int_0^{t \wedge \tau_m} 2 c(x,\xi_s(x)) ds \right]
     \\
     &\leq 2C_2(x)\frac{m}{v(x)} t < \infty.
 \end{align*}
\end{proof}

\section{Pathwise uniqueness}

In this section we prove the pathwise uniqueness of the solution under slightly weaker conditions, i.e., we consider:
\begin{enumerate}
   \item[(A1')] The drift coefficient $B(x,\eta)$ has the form 
   $B(x,\eta) = B_0(x,\eta) - B_1(x,\eta(x))$ where $B_0(x,\cdot): \X \longrightarrow \R_+$ and $B_1(x,\cdot): \R_+ \longrightarrow \R_+$ are measurable mappings for each $x \in V$. Moreover, for each $R > 0$ there exists a constant $C_1(R) > 0$ such that
   \[
   \|B_0(\cdot, \eta) - B_0(\cdot, \xi) \| \leq C_1(R) \| \eta - \xi \|, \qquad x \in V,
   \]
   holds for all $\eta,\xi \in \X$ with $\| \eta\|, \| \xi \| \leq R$.
   Finally, the function $\R_+ \ni t \longmapsto B_1(x,t)$ is continuous and non-decreasing satisfying $B_1(x,0) = 0$ for each $x \in V$.
   
    \item[(A5')] For each $R > 0$ there exists a constant $C_5(R) > 0$ such that
    \[
     \int_{\X \backslash \{0\}} \sum_{x \in V}v(x)\nu(x) | \rho(x,\eta, \nu) - \rho(x,\xi, \nu)| H_2(d\nu) \leq C_5(R)\| \eta - \xi\|
    \]
    holds for all $\eta,\xi \in \X$ with $\| \eta \|, \|\xi\| \leq R$.
\end{enumerate}
In contrast to (A1) and (A5), the above conditions do not require that $B$ and $\rho$ are monotone with respect to the configuration $\eta$. 
The following is our main result on the uniqueness of \eqref{SDE}.
\begin{Theorem}\label{thm: uniqueness}
 Let $(\eta_t)_{t \geq 0}$ and $(\xi_t)_{t \geq 0}$ be two weak solutions to \eqref{SDE} defined on the same stochastic basis $(\Omega, \F, (\F_t)_{t \geq 0},\P)$ and suppose that conditions (A1'), (A2) -- (A4), and (A5') are satisfied. Let $\tau_m(\eta), \tau_m(\xi)$ be the stopping times defined in Lemma \ref{lemma: localization} and set $\tau_m := \tau_m(\eta) \wedge \tau_m(\xi)$. Then
 \[
   \E[ \| \eta_{t \wedge \tau_m} - \xi_{t \wedge \tau_m} \| ]
   \leq \E[ \| \eta_0 - \xi_0 \| ] e^{\left( C_1(m) + 2C_4 + C_5(m) \right)t}, \qquad t \geq 0
 \]
 holds for each $m \geq 1$.
 In particular, if $\eta_0 = \xi_0$ holds a.s., then 
 $\P[ \eta_t = \xi_t, \ t \geq 0] = 1$, i.e. pathwise uniqueness among weak solutions to \eqref{SDE} holds.
\end{Theorem}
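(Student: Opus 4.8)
The plan is to carry out an infinite-dimensional version of the Yamada--Watanabe argument. Set $Z_t(x) := \eta_t(x) - \xi_t(x)$; subtracting the two instances of \eqref{SDE} shows that for each $x \in V$ the coordinate $Z_t(x)$ solves a one-dimensional stochastic equation with drift $B(x,\eta_s) - B(x,\xi_s)$, Brownian coefficient $\sqrt{2c(x,\eta_s(x))} - \sqrt{2c(x,\xi_s(x))}$ against $W(x)$, compensated jumps against $\widetilde N_x$ of amplitude $\nu(x)(\1_{\{u \le g(x,\eta_{s-}(x))\}} - \1_{\{u \le g(x,\xi_{s-}(x))\}})$, uncompensated jumps against $N_y$ ($y \neq x$) of amplitude $\nu(x)(\1_{\{u \le g(y,\eta_{s-}(y))\}} - \1_{\{u \le g(y,\xi_{s-}(y))\}})$, and uncompensated jumps against $M$ of amplitude $\nu(x)(\1_{\{u \le \rho(x,\eta_{s-},\nu)\}} - \1_{\{u \le \rho(x,\xi_{s-},\nu)\}})$. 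Fix $m \ge 1$; by Lemma \ref{lemma: localization}, on $[0,\tau_m]$ one has $\|\eta_{t-}\|, \|\xi_{t-}\| \le m$, hence $\abs{Z_{t-}(x)} \le 2m/v(x)$, and $\tau_m \nearrow \infty$ a.s. Introduce the Yamada--Watanabe functions $\phi_k \in C^2(\R)$: choose $1 = a_0 > a_1 > \cdots \downarrow 0$ with $\int_{a_k}^{a_{k-1}} u^{-1}\,du = k$ and $0 \le \psi_k \in C_c((a_k,a_{k-1}))$ with $\psi_k(u) \le 2/(ku)$ and $\int \psi_k = 1$, and set $\phi_k(z) = \int_0^{\abs z}\int_0^y \psi_k(u)\,du\,dy$. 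Then $\phi_k$ is even and convex, $\max(0, \abs z - a_{k-1}) \le \phi_k(z) \le \abs z$, $\abs{\phi_k'} \le 1$ with $z\phi_k'(z) \ge 0$, and $0 \le \phi_k''(z) = \psi_k(\abs z) \le 2/(k\abs z)$. The scheme is: apply It\^o's formula to $\phi_k(Z_{t\wedge\tau_m}(x))$, take expectations, multiply by $v(x)$ and sum over $x \in V$, let $k \to \infty$, apply Gronwall's lemma, and finally let $m \to \infty$.

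Applying It\^o's formula to $\phi_k(Z_{t\wedge\tau_m}(x))$ produces local-martingale terms together with the following contributions, which I would bound as follows. The drift contributes $\int_0^{t\wedge\tau_m}\phi_k'(Z_s(x))(B(x,\eta_s) - B(x,\xi_s))\,ds$; its $B_1$-part $-\phi_k'(Z_s(x))(B_1(x,\eta_s(x)) - B_1(x,\xi_s(x)))$ is pointwise $\le 0$ because $B_1(x,\cdot)$ is non-decreasing and $z\phi_k'(z) \ge 0$, hence is discarded, while its $B_0$-part is bounded by $\abs{B_0(x,\eta_s) - B_0(x,\xi_s)}$, which after $\sum_x v(x)$ and (A1') (with $R = m$) is at most $C_1(m)\|Z_s\|$. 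The Brownian bracket contributes $\phi_k''(Z_s(x))(\sqrt{c(x,\eta_s(x))} - \sqrt{c(x,\xi_s(x))})^2$; using $(\sqrt a - \sqrt b)^2 \le \abs{a-b}$, (A2), and $\phi_k''(z) \le 2/(k\abs z)$ this is $\le 2C_2(x)/k$, which after $\sum_x v(x)$ is $O(1/k)$ by (A2). For the $\widetilde N_x$-jumps, integrating out $u$ in the compensator and using that $g(x,\cdot)$ is non-decreasing (so $\operatorname{sgn}(g(x,\eta_{s-}(x)) - g(x,\xi_{s-}(x))) = \operatorname{sgn}(Z_{s-}(x))$ whenever nonzero) and that $\phi_k$ is even, one obtains $\int_{\X \backslash \{0\}}\abs{g(x,\eta_{s-}(x)) - g(x,\xi_{s-}(x))}\big[\phi_k(\abs{Z_{s-}(x)} + \nu(x)) - \phi_k(\abs{Z_{s-}(x)}) - \nu(x)\phi_k'(\abs{Z_{s-}(x)})\big]H_1(x,d\nu)$; on $\{\|\nu\| \le 1\}$ the Taylor bound $\phi_k(r+\nu(x)) - \phi_k(r) - \nu(x)\phi_k'(r) \le 2\nu(x)^2/(kr)$ combined with $\abs{g(x,\eta_{s-}(x)) - g(x,\xi_{s-}(x))} \le C_3(x)\abs{Z_{s-}(x)}$ (by (A3)) makes the factor $r = \abs{Z_{s-}(x)}$ cancel, leaving $2C_3(x)k^{-1}\int_{\{\|\nu\| \le 1\} \backslash \{0\}}\nu(x)^2 H_1(x,d\nu)$, which after $\sum_x v(x)$ is $O(1/k)$ by the first bound in (A4); on $\{\|\nu\| > 1\}$ the bound $\phi_k(r+\nu(x)) - \phi_k(r) - \nu(x)\phi_k'(r) \le \nu(x)$ with (A3) gives $C_3(x)\abs{Z_{s-}(x)}\int_{\{\|\nu\|>1\}}\nu(x)H_1(x,d\nu)$, which after $\sum_x v(x)$ is $\le C_4\|Z_{s-}\|$ by the second bound in (A4). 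For the $N_y$-jumps ($y \neq x$), since $\abs{\phi_k(Z_{s-}(x) + h) - \phi_k(Z_{s-}(x))} \le \abs h$ and integrating out $u$ yields $\nu(x)\abs{g(y,\eta_{s-}(y)) - g(y,\xi_{s-}(y))} \le \nu(x)C_3(y)\abs{Z_{s-}(y)}$, summing $\sum_x v(x)$ and invoking the third bound in (A4) gives $\le C_4\|Z_{s-}\|$. For the $M$-jumps one likewise obtains $\int_{\X \backslash \{0\}}\nu(x)\abs{\rho(x,\eta_{s-},\nu) - \rho(x,\xi_{s-},\nu)}H_2(d\nu)$, which after $\sum_x v(x)$ is $\le C_5(m)\|Z_{s-}\|$ by (A5') (with $R = m$).

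Before summing, one verifies that the local martingales are true martingales on $[0,\tau_m]$: the Brownian integral and the $\{\|\nu\| \le 1\}$-compensated integral are $L^2$-martingales since their predictable brackets are integrable via $\abs{Z_{s-}(x)} \le 2m/v(x)$ together with (A2) and the first bound in (A4), while the $\{\|\nu\|>1\}$-, $N_y$- and $M$-compensated integrals are differences of integrable finite-variation processes, exactly as in the proofs of Theorems \ref{thm: conservative} and \ref{first_moment_estimate}. Taking expectations, multiplying by $v(x)$, summing over $x$ (by Tonelli; all terms are finite by the localization), and using $\phi_k(Z_0(x)) \le \abs{Z_0(x)}$ gives, with $L_m := C_1(m) + 2C_4 + C_5(m)$,
\[
 \sum_{x \in V}v(x)\,\E\big[\phi_k(Z_{t\wedge\tau_m}(x))\big] \ \le\ \E\big[\|\eta_0 - \xi_0\|\big] + \delta_k + L_m \int_0^t \E\big[\1_{[0,\tau_m]}(s)\|Z_s\|\big]\,ds,
\]
where $\delta_k \to 0$ as $k \to \infty$. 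Letting $k \to \infty$ and using Fatou's lemma (for the sum and for the expectation, all integrands being nonnegative) together with $\phi_k(z) \to \abs z$ on the left, and $\E[\1_{[0,\tau_m]}(s)\|Z_s\|] \le \E[\|Z_{s\wedge\tau_m}\|]$ on the right, yields $\E[\|Z_{t\wedge\tau_m}\|] \le \E[\|\eta_0 - \xi_0\|] + L_m\int_0^t \E[\|Z_{s\wedge\tau_m}\|]\,ds$; since $s \mapsto \E[\|Z_{s\wedge\tau_m}\|]$ is locally bounded (again by the localization), Gronwall's lemma gives the claimed estimate $\E[\|\eta_{t\wedge\tau_m} - \xi_{t\wedge\tau_m}\|] \le \E[\|\eta_0 - \xi_0\|]\,e^{L_m t}$. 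If $\eta_0 = \xi_0$ a.s., this forces $\eta_{t\wedge\tau_m} = \xi_{t\wedge\tau_m}$ a.s. for all $t$ and $m$; letting $m \to \infty$ (so $\tau_m \nearrow \infty$ a.s.) and using right-continuity of the paths, we conclude $\P[\eta_t = \xi_t,\ t \ge 0] = 1$.

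The main obstacle is the jump part. One must organize It\^o's formula so that the compensated ($\widetilde N_x$) and uncompensated ($N_y$, $M$) Poisson integrals are treated coherently, verify the integrability needed for the martingale property after $\tau_m$-localization (via the split $\{\|\nu\| \le 1\}$ versus $\{\|\nu\| > 1\}$ and condition (A4)), and --- most delicately --- exploit in the $\widetilde N_x$-term that the Lipschitz estimate $\abs{g(x,\eta_{s-}(x)) - g(x,\xi_{s-}(x))} \le C_3(x)\abs{Z_{s-}(x)}$ from (A3) cancels the $1/\abs{Z_{s-}(x)}$ singularity of $\phi_k''$, so that the small-jump contribution vanishes as $k \to \infty$. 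This is the jump analogue of the classical $1/2$-H\"older diffusion estimate, and it is where the first bound in (A4) is essential. Note that monotonicity of $B_1$ and of $g$ in the last variable is used only to discard or sign-control two of the terms, and is much weaker than the full monotonicity of $B$ and $\rho$ postulated in (A1) and (A5).
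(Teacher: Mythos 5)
Your proposal is correct and takes essentially the same route as the paper's proof: the same Yamada--Watanabe smoothing of the absolute value, the same decomposition of It\^o's formula into drift, Brownian, small-/large-jump, cross-site jump, and immigration terms, the same cancellation of the $1/\abs{Z_{s-}(x)}$ singularity against the Lipschitz bound from (A3), the same localization and appendix-style martingale verification, and the same Gronwall closure. The only cosmetic difference is that you make the $\phi_k$ construction explicit and phrase the $\widetilde N_x$ estimate in terms of $\abs{Z_{s-}(x)}$ rather than splitting cases on the sign of $\zeta_{s-}(x)$, which is equivalent by evenness of $\phi_k$.
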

\begin{proof}
  Define $\zeta_t := \eta_t - \xi_t$ and fix $x \in V$. Then
 \begin{align*}
  \zeta_t(x) 
  &= \zeta_0(x) + \int_0^t \left(B(x,\eta_s) - B(x,\xi_s) \right)ds 
    \\ &\ \ \ + \int_0^t \left( \sqrt{2c(x,\eta_s(x))} - \sqrt{2 c(x,\xi_s(x))}\right)dW_s(x)
    \\ &\ \ \ + \int_0^t \int_{\X \backslash \{0\}}\int_{\R_+} \nu(x) \left(\mathbbm{1}_{ \{ u \leq g(x,\eta_{s-}(x))\} } - \mathbbm{1}_{ \{ u \leq g(x,\xi_{s-}(x))\} } \right) \widetilde{N}_x(ds,d\nu,du)
    \\ &\ \ \ + \sum_{y \in V \backslash \{x\}}\int_0^t \int_{\X \backslash \{0\}}\int_{\R_+} \nu(x) \left(\mathbbm{1}_{ \{ u \leq g(y,\eta_{s-}(y))\} } - \mathbbm{1}_{ \{ u \leq g(y,\xi_{s-}(y))\} } \right) N_y(ds,d\nu,du) 
    \\ &\ \ \ + \int_{0}^t \int_{\X \backslash \{0\}} \int_{\R_+}\nu(x) \left(\mathbbm{1}_{ \{ u \leq \rho(x,\eta_{s-},\nu)\} } - \mathbbm{1}_{ \{ u \leq \rho(x,\xi_{s-},\nu)\} } \right) M(ds,d\nu,du).
 \end{align*}
 Let $\phi_k: \R \longrightarrow \R_+$ be a sequence of twice continuously differentiable functions such that for each $k \geq 1$,
 \begin{enumerate} \label{phi k}
     \item[(i)] $\phi_k(-z) = \phi_k(z) \nearrow |z|$ as $k \to \infty$,
     \item[(ii)] $\phi_k'(z) \in [0,1]$ for $z \geq 0$ and $\phi_k'(z) \in [-1,0]$ for $z \leq 0$,
     \item[(iii)] $\phi_k''(z)|z| \leq 2/k$ holds for all $z \in \R$.
 \end{enumerate} 
 The construction of such a function follows the same arguments as the classical Yamada-Watanabe theorem for pathwise uniqueness. To simplify the notation below, we set $D_h\phi_k(z) := \phi_k(z+h) - \phi_k(z)$ for $z,h \in \R$. Let $z,h \in \R$ such that $zh \geq0$. Then using the mean-value theorem one can check that 
 \begin{align}\label{eq: estimate on phik}
  D_h\phi_k(z) \leq |h|, \ \ D_h\phi_k(z) - \phi_k'(z)h \leq \frac{h^2}{k|z|}, \ \  \text{ and }
  D_h\phi_k(z) - \phi_k'(z)h \leq |h|.  
 \end{align}
 Applying the It\^{o} formula to $\zeta_t(x)$ gives
 \begin{align}\label{eq:00}
 \phi_k(\zeta_t(x)) = \phi_k(\zeta_0(x)) 
     + \sum_{j=1}^5 \mathcal{R}_j(t) + \mathcal{M}(t),
 \end{align}
 where the processes $\mathcal{R}_1, \dotsc, \mathcal{R}_5$ are given by
 \label{mathcal R}
  \begin{align*}
 \mathcal{R}_1(t) &= \int_0^t \phi_k'(\zeta_s(x))\left( B(x,\eta_s) - B(x,\xi_s) \right)ds
 \\ \mathcal{R}_2(t) &= \frac{1}{2}\int_0^t \phi_k''(\zeta_s(x))\left( \sqrt{2c(x,\eta_s(x))} - \sqrt{2 c(x,\xi_s(x))}\right)^2 ds
 \\ \mathcal{R}_3(t) &= \int_0^t \int_{\X \backslash \{0\}} \int_{\R_+} \left( D_{\Delta_0(x,s)}\phi_k(\zeta_{s-}(x)) - \phi_k'(\zeta_{s-}(x))\Delta_0(x,s) \right) ds H_1(x,d\nu)du
  \\ \mathcal{R}_4(t) &= \sum_{y \in V \backslash \{x\}}\int_0^t \int_{\X \backslash \{0\}} \int_{\R_+} D_{\Delta_0(y,s)}\phi_k(\zeta_{s-}(x)) ds H_1(y,d\nu)du
 \\ \mathcal{R}_5(t) &= \int_0^t \int_{\X \backslash \{0\}}\int_{\R_+} D_{\Delta_1(x,s)}\phi_k(\zeta_{s-}(x)) ds H_2(d\nu)du
 \end{align*}
 with increments given by
 \begin{align*}
     \Delta_0(z,s) &= \nu(x) \left(\mathbbm{1}_{ \{ u \leq g(z,\eta_{s-}(z))\} } - \mathbbm{1}_{ \{ u \leq g(z,\xi_{s-}(z))\} }\right),
     \\ \Delta_1(z,s) &= \nu(z) \left(\mathbbm{1}_{ \{ u \leq \rho(z,\eta_{s-},\nu)\} } - \mathbbm{1}_{ \{ u \leq \rho(z,\xi_{s-},\nu)\} } \right).
 \end{align*}
 Note that $\Delta_0(z,s)$ and $\Delta_1(z,s)$ also depend on $u \geq 0$. The process $(\mathcal{M}(t))_{t \geq 0}$  given by
 \begin{align}\label{eq: martingale pathwise uniqueness}
     \mathcal{M}(t) &= \int_0^t \phi_k'(\zeta_s(x)) \left( \sqrt{2c(x,\eta_s(x))} - \sqrt{2 c(x,\xi_s(x))}\right)dW_s(x)
     \\ \notag &\ \ \ + \sum_{y \in V}\int_0^t \int_{\X \backslash \{0\}} \int_{\R_+}  D_{\Delta_0(y,s)}\phi_k(\zeta_{s-}(x)) \widetilde{N}_y(ds,d\nu,du)
     \\ \notag &\ \ \ + \int_0^t \int_{\X \backslash \{0\}}\int_{\R_+} D_{\Delta_1(x,s)}\phi_k(\zeta_{s-}(x)) \widetilde{M}(ds,d\nu,du),
 \end{align}
 is a local martingale.
 Here $\widetilde{M} = M - \widehat{M}$ denotes the compensated Poisson random measure. Note that $\mathcal{R}_j$ and $\mathcal{M}(t)$ do also depend on the previously fixed point $x$. Recall that $\tau_m$ satisfies, by 
 Lemma \ref{lemma: localization}, $\tau_m \longrightarrow \infty$ and it holds that
 \begin{align}\label{eq: boundedness by localization}
  \eta_{s-}(x), \xi_{s-}(x) \leq \frac{m}{v(x)} \ \ \text{ and }\ \ \| \eta_{s-} \|, \| \xi_{s-}\| \leq m, \qquad s \in [0, \tau_m], \ \ x \in V.
 \end{align}
 Using Property \eqref{eq: boundedness by localization} it is not difficult to see that $(\mathcal{M}(t \wedge \tau_m))_{t \geq 0}$ is a martingale for each $k$ and each $m$. For sake of completeness, the proof is given in the appendix. Below we will show that
  \begin{align}
  \notag 
   \mathcal{R}_1(t \wedge \tau_m) &\leq \int_0^{t \wedge \tau_m} |B_0(x,\eta_s) - B_0(x,\xi_s)|ds,
   \\  \mathcal{R}_2(t \wedge \tau_m) &\leq C_2(x) \frac{2t}{k}, \label{verwirrend}
   \\ \notag \mathcal{R}_3(t \wedge \tau_m) &\leq \frac{C_3(x)t}{k}\int_{ \{ \| \nu \| \leq 1\} \backslash \{0\}} \nu(x)^2 H_1(x,d\nu) 
   \\ \notag &\qquad + C_3(x)\left( \int_{\{\| \nu \| > 1\}} \nu(x)H_1(x,d\nu) \right) \int_0^{t \wedge \tau_m}|\eta_{s-}(x) - \xi_{s-}(x)| ds,
   \\ \mathcal{R}_4(t \wedge \tau_m) &\leq \sum_{y \in V \backslash \{x\}} \left( \int_{\X \backslash \{0\}} \nu(x) H_1(y,d\nu) \right) C_3(y) \int_0^{t \wedge \tau_m}|\eta_{s-}(y) - \xi_{s-}(y)| ds,
    \\ \notag \mathcal{R}_5(t \wedge \tau_m) &\leq \int_0^{t \wedge \tau_m} \int_{\X \backslash \{0\}} \nu(x) |\rho(x,\eta_{s-}, \nu) - \rho(x,\xi_{s-},\nu)| H_2(d\nu)ds.
  \end{align}
  Taking then expectations in \eqref{eq:00} and using the above estimates gives 
  \begin{align*}
      \E[ \phi_k(\zeta_{t \wedge \tau_m}(x))]
      &= \E[ \phi_k(\zeta_0(x)) ]
      + \E \left[ \sum_{j=1}^{5}\mathcal{R}_j(t \wedge \tau_m) \right]
      \\
      &\leq \E[ \phi_k(\zeta_0(x)) ]
      + \E\left[ \int_0^{t \wedge \tau_m} |B_0(x,\eta_s) - B_0(x,\xi_s)| ds \right]
      \\
      &\ \ \ + C_2(x) \frac{2t}{k}
      + \frac{C_3(x)t}{k}\int_{ \{ \| \nu \| \leq 1\} \backslash \{0\}} \nu(x)^2 H_1(x,d\nu) 
      \\
      &\ \ \ + C_3(x)\left( \int_{\{\| \nu \| > 1\}} \nu(x) H_1(x,d\nu) \right) \E\left[ \int_0^{t \wedge \tau_m} |\eta_{s-}(x) - \xi_{s-}(x)| ds \right]
      \\
      &\ \ \ + \sum_{y \in V \backslash \{x\}} \left( \int_{\X \backslash \{0\}} \nu(x) H_1(y,d\nu) \right) C_3(y)\E\left[ \int_0^{t \wedge \tau_m}|\eta_{s-}(y) - \xi_{s-}(y)| ds\right]
      \\
      &\ \ \ + \E\left[\int_0^{t \wedge \tau_m} \int_{\X \backslash \{0\}} \nu(x) |\rho(x,\eta_{s-}, \nu) - \rho(x,\xi_{s-},\nu)| H_2(d\nu)ds \right].
  \end{align*}
  Letting first $k \to \infty$, then multiplying each term by $v(x)$  and summing up over $x$, and finally using (i) yields 
  \begin{align*}
      &\ \E[ \| \eta_{t \wedge \tau_m} - \xi_{t \wedge \tau_m} \| ]
      \\ &= \sum_{x \in V}v(x) \E[ | \eta_{t \wedge \tau_m}(x) - \xi_{t \wedge \tau_m}(x)|]
      \\ &\leq \sum_{x \in V}v(x) \E[ |\eta_0(x) - \xi_0(x)| ]
      + \sum_{x \in V}v(x) \E\left[ \int_0^{t \wedge \tau_m}| B_0(x,\eta_{s-}) - B_0(x,\xi_{s-})| ds \right]
      \\ &\ \ \ + \sum_{x \in V}v(x)C_3(x)\left( \int_{\{\| \nu \| > 1\}} \nu(x) H_1(x,d\nu) \right) \E\left[ \int_0^{t \wedge \tau_m} |\eta_{s-}(x) - \xi_{s-}(x)| ds \right]
      \\ &\ \ \ + \sum_{x \in V}\sum_{y \in V \backslash \{x\}} \left( \int_{\X \backslash \{0\}} v(x)\nu(x) H_1(y,d\nu) \right) C_3(y) \E\left[\int_0^{t \wedge \tau_m}|\eta_{s-}(y) - \xi_{s-}(y)| ds \right]
      \\ &\ \ \ + \E\left[\int_0^{t \wedge \tau_m} \int_{\X \backslash \{0\}} \sum_{x \in V}v(x)\nu(x) |\rho(x,\eta_{s-}, \nu) - \rho(x,\xi_{s-},\nu)| H_2(d\nu)ds \right]
      \\ &\leq \E[ \| \eta_0 - \xi_0 \| ] + \E\left[ \int_0^{t \wedge \tau_m} \| B_0(\cdot, \eta_{s-}) - B_0(\cdot, \xi_{s-}) \| ds \right]
      \\ &\ \ \ + (2C_4 + C_5(m))\E\left[ \int_0^{t \wedge \tau_m} \| \eta_{s-} - \xi_{s-}\| ds \right]
      \\ &\leq \E[ \| \eta_0 - \xi_0 \| ] + (C_1(m) + 2C_4 + C_5(m)) \int_0^{t} \E\left[\| \eta_{s \wedge \tau_m} - \xi_{s \wedge \tau_m}\| \right] ds
  \end{align*}
  where we have used (A1'), (A4), (A5'), and
  \begin{align*}
   &\ \sum_{x \in V}\sum_{y \in V \backslash \{x\}} \left( \int_{\X \backslash \{0\}} v(x)\nu(x) H_1(y,d\nu) \right) C_3(y) \E\left[\int_0^{t \wedge \tau_m}|\eta_{s-}(y) - \xi_{s-}(y)| ds \right]
    \\ &= \sum_{y \in V}  \left( \int_{\X \backslash \{0\}} \sum_{x \in V \backslash \{y\}}v(x)\nu(x) H_1(y,d\nu) \right) C_3(y) \E\left[\int_0^{t \wedge \tau_m}|\eta_{s-}(y) - \xi_{s-}(y)| ds \right]
      \\ &\leq C_4\sum_{y \in V} v(y) \E\left[\int_0^{t \wedge \tau_m}|\eta_{s-}(y) - \xi_{s-}(y)| ds \right].
  \end{align*}
  The assertion of the theorem follows from the Gronwall lemma. Hence it remains to prove the estimates for $\mathcal{R}_j(t \wedge \tau_m)$, $j = 1,\dots, 5$ in \eqref{verwirrend}.
  For the first term, we obtain from (A1') combined with (ii) that 
  $\phi_k'(\zeta_s(x))(B_1(x,\eta_s(x)) - B_1(x,\xi_s(x))) \geq 0$
  holds a.s. for $s \in [0,t \wedge \tau_m]$.
  Hence we obtain 
  \begin{align*}
     \mathcal{R}_1(t \wedge \tau_m) 
      &= \int_0^{t \wedge \tau_m} \phi_k'(\zeta_s(x))(B_0(x,\eta_s) - B_0(x,\xi_s))ds 
      \\ &\ \ \ - \int_0^{t \wedge \tau_m} \phi_k'(\zeta_s(x))(B_1(x,\eta_s(x)) - B_1(x,\xi_s(x)))ds
      \\ &\leq \int_0^{t \wedge \tau_m} |B_0(x,\eta_s) - B_0(x,\xi_s)|ds
  \end{align*}
  For the second term, we first observe that (A2) and property (iii) yield
  \begin{align*}
      \phi_k''(\zeta_{s-}(x)) | c(x,\eta_{s-}(x)) - c(x,\xi_{s-}(x))|
     &\leq C_2(x)\phi_k''(\zeta_{s-}(x))| \eta_{s-}(x) - \xi_{s-}(x)|
     \\ &\leq C_2(x) \frac{2}{k},
  \end{align*}
  where we have used (A2). Hence using the elementary inequality $(a-b)^2 \leq |a^2 - b^2|$ for $a,b > 0$ for the first inequality, we find that
   \begin{align*}
      \mathcal{R}_2(t \wedge \tau_m) 
      &\leq \int_0^{t \wedge \tau_m} \phi_k''(\zeta_s(x)) |c(x,\eta_s(x)) - c(x,\xi_s(x))| ds
      \leq C_2(x) \frac{2t}{k}.
  \end{align*}
  To estimate the third term $\mathcal{R}_3$, we decompose the integral against $H_1(x,d\nu)$ into ${\{ \| \nu \| \leq 1\} \backslash \{0\}}$ and $\{ \| \nu \| > 1\}$ to find that $\mathcal{R}_3(t) = \mathcal{R}_3^1(t) + \mathcal{R}_3^2(t)$, where
  \begin{align*}
      \mathcal{R}_3^1(t) &= \int_0^t \int_{ \{ \| \nu \| \leq 1\} \backslash \{0\}} \int_{\R_+} \left(  D_{\Delta_0(x,s)}\phi_k(\zeta_{s-}(x)) - \phi_k'(\zeta_{s-}(x))\Delta_0(x,s) \right) ds H_1(x,d\nu)du
      \\
      \mathcal{R}_3^2(t) &= \int_0^t \int_{ \{ \| \nu \| > 1\}} \int_{\R_+} \left( D_{\Delta_0(x,s)}\phi_k(\zeta_{s-}(x)) - \phi_k'(\zeta_{s-}(x))\Delta_0(x,s) \right) ds H_1(x,d\nu)du.
  \end{align*}
  In order to estimate these integrals, we first compute the integral against $du$. Namely, observe that by (A3), $\zeta_{s-}(x) \leq 0$ implies that $g(x,\eta_{s-}(x)) \leq g(x,\xi_{s-}(x))$ and hence $\Delta_0(x,s) \leq 0$ while $\zeta_{s-}(x) > 0$ implies $\Delta_0(x,s) \geq 0$.
  Combining both observations we find for $\| \nu \| \leq 1$ by \eqref{eq: estimate on phik}
  \begin{align*}
     &\ \int_{\R_+} \left( D_{\Delta_0(x,s)}\phi_k(\zeta_{s-}(x)) - \phi_k'(\zeta_{s-}(x))\Delta_0(x,s) \right) du
     \\
     &= \int_{\R_+} \1_{ \{ \zeta_{s-}(x) > 0 \} }\left( D_{\Delta_0(x,s)}\phi_k(\zeta_{s-}(x)) - \phi_k'(\zeta_{s-}(x))\Delta_0(x,s) \right) du
     \\
     &\ \ \ +\int_{\R_+} \1_{ \{ \zeta_{s-}(x) \leq 0 \} }\left( D_{\Delta_0(x,s)}\phi_k(\zeta_{s-}(x)) - \phi_k'(\zeta_{s-}(x))\Delta_0(x,s) \right) du
     \\
     &= \int_{g(x,\xi_{s-}(x))}^{g(x,\eta_{s-}(x))} \1_{ \{ \zeta_{s-}(x) > 0 \} }\left( D_{\nu(x)}\phi_k(\zeta_{s-}(x)) - \phi_k'(\zeta_{s-}(x))\nu(x) \right) du
     \\
     &\ \ \ + \int_{g(x,\eta_{s-}(x))}^{g(x,\xi_{s-}(x))} \1_{ \{ \zeta_{s-}(x) \leq 0 \} }\left( D_{-\nu(x)}\phi_k(\zeta_{s-}(x)) + \phi_k'(\zeta_{s-}(x))\nu(x) \right) du
     \\
     &\leq \left| g(x,\eta_{s-}(x)) - g(x,\xi_{s-}(x))\right|\frac{\nu(x)^2}{k|\zeta_{s-}(x)|}
     \\
     &\leq \frac{C_3(x)}{k} \nu(x)^2
  \end{align*}
  while for $\| \nu \| > 1$, we obtain 
  \begin{align*}
      &\ \int_{\R_+} \left( D_{\Delta_0(x,s)}\phi_k(\zeta_{s-}(x)) - \phi_k'(\zeta_{s-}(x))\Delta_0(x,s) \right) du
     \\ &=  \int_{g(x,\xi_{s-}(x))}^{g(x,\eta_{s-}(x))} \1_{ \{ \zeta_{s-}(x) > 0 \} }\left( D_{\nu(x)}\phi_k( \zeta_{s-}(x)) - \phi_k'(\zeta_{s-}(x))\nu(x) \right) du
     \\ &\ \ \ + \int_{g(x,\eta_{s-}(x))}^{g(x,\xi_{s-}(x))} \1_{ \{ \zeta_{s-}(x) \leq 0 \} }\left( D_{-\nu(x)}\phi_k(\zeta_{s-}(x)) + \phi_k'(\zeta_{s-}(x))\nu(x) \right) du
     \\ &\leq \left| g(x,\eta_{s-}(x)) - g(x,\xi_{s-}(x))\right| \nu(x)
     \\ & \leq C_3(x)\nu(x)|\eta_{s-}(x) - \xi_{s-}(x)|.
  \end{align*}
 For the first part, we obtain 
  \begin{align*}
      \mathcal{R}_3^1(t \wedge \tau_m) 
      \leq \frac{C_3(x)t}{k} \int_{ \{ \| \nu \| \leq 1\} \backslash \{0\}} \nu(x)^2 H_1(x,d\nu),
  \end{align*}
  while the second part is estimated as follows:
  \begin{align*}
     \mathcal{R}_3^2(t \wedge \tau_m) 
     &\leq C_3(x) \left( \int_{ \{ \| \nu \| > 1\} } \nu(x) H_1(x,d\nu) \right) \int_0^{t \wedge \tau_m}|\eta_{s-}(x) - \xi_{s-}(x)| ds.
  \end{align*}
  For the fourth term, we obtain from \eqref{eq: estimate on phik}
  \begin{align*}
      \mathcal{R}_4(t\land\tau_m) &\leq \sum_{y\in V \setminus \{x\}}\int_0^{t\land\tau_m}\int_{\X \setminus \{0\}}\nu(x)|g(y,\eta_{s-}(y))-g(y,\xi_{s-}(y))|ds H_1(y,d\nu)
      \\ &\leq \sum_{y \in V \backslash \{x\}} \left( \int_{\X \backslash \{0\}} \nu(x) H_1(y,d\nu) \right) C_3(y) \int_0^{t \wedge \tau_m}|\eta_{s-}(y) - \xi_{s-}(y)| ds.
  \end{align*}
  Finally, we find that
  \begin{align*}
      \mathcal{R}_5(t \wedge \tau_m) &\leq \int_0^{t \wedge \tau_m} \int_{\X\backslash \{0\}} \int_{\R_+} |\Delta_1(x,s)| ds H_2(d\nu)du
      \\ &= \int_0^{t \wedge \tau_m} \int_{\X \backslash \{0\}} \nu(x) | \rho(x,\eta_{s-},\nu) - \rho(x,\xi_{s-}, \nu)| ds H_2(d\nu).
  \end{align*}
  This proves the desired inequalities for $\mathcal{R}_j(t\wedge \tau_m)$, $j = 1,\dots, 5$ and hence completes the proof of this statement.
\end{proof}

\section{Comparison principles}

In this section, we show that under conditions (A1) -- (A5), the process is monotone with respect to the initial condition. Moreover, we establish a monotonicity principle with respect to the drift parameters. Let us start with the following technical result. The desired comparison property is then proved afterwards.

\begin{Lemma}\label{lemma: monotonicity}
 Suppose that conditions (A1) and (A5) are satisfied.
 Then
 \[
  \sum_{x \in V}v(x)(B_0(x, \eta) - B_0(x,\xi))^+ \leq C_1(m)\sum_{x\in V} v(x)(\eta(x)-\xi(x))^+
 \]
 and 
 \begin{align*}
  &\ \int_{\X \backslash \{0\}} \sum_{x \in V} v(x)\nu(x)(\rho(x,\eta,\nu) - \rho(x,\xi,\nu))^+ H_2(d\nu) 
  \\ &\qquad \leq C_5(m)\sum_{x \in V}v(x)(\eta(x) - \xi(x))^+
 \end{align*}
 hold for all $\eta, \xi \in \X$ satisfying $\| \eta \|, \| \xi \| \leq m$ for some $m \in \N$ where $z^+ = \max\{z,0\}$.
\end{Lemma}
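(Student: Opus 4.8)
\emph{Proof idea.} The plan is to exploit the monotonicity of $B_0$ and $\rho$ in the configuration variable together with the local Lipschitz bounds of (A1) and (A5), the key device being to compare $\eta$ not with $\xi$ directly but with the pointwise minimum $\zeta := \eta \wedge \xi$, i.e.\ $\zeta(x) = \min\{\eta(x),\xi(x)\}$. Since $0 \le \zeta(x) \le \eta(x)$ for every $x \in V$, we have $\zeta \in \X$ with $\|\zeta\| \le \|\eta\| \le m$, so both $\eta$ and $\zeta$ lie in the ball of radius $m$ on which the constants $C_1(m)$ and $C_5(m)$ are available. Moreover $\zeta \le \eta$ and $\zeta \le \xi$, so the monotonicity part of (A1) yields $B_0(x,\zeta) \le B_0(x,\eta)$ and $B_0(x,\zeta) \le B_0(x,\xi)$ for all $x$. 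Since $t \mapsto t^+$ is non-decreasing and $B_0 \ge 0$, this gives the pointwise estimate
\[
 (B_0(x,\eta) - B_0(x,\xi))^+ \le (B_0(x,\eta) - B_0(x,\zeta))^+ = B_0(x,\eta) - B_0(x,\zeta) = |B_0(x,\eta) - B_0(x,\zeta)|.
\]

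Multiplying by $v(x)$ and summing over $x \in V$, the right-hand side becomes $\|B_0(\cdot,\eta) - B_0(\cdot,\zeta)\|$, which by (A1) is bounded by $C_1(m)\|\eta - \zeta\|$. It then only remains to use the elementary identity $\eta(x) - \min\{\eta(x),\xi(x)\} = (\eta(x)-\xi(x))^+$, so that $\|\eta - \zeta\| = \sum_{x \in V} v(x)(\eta(x)-\xi(x))^+$; this is precisely the asserted first inequality.

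The second inequality is proved the same way with $\rho$ replacing $B_0$: by the monotonicity part of (A5), $\rho(x,\zeta,\nu) \le \rho(x,\eta,\nu)$ and $\rho(x,\zeta,\nu) \le \rho(x,\xi,\nu)$ for all $x \in V$, $\nu \in \X \backslash \{0\}$, hence $(\rho(x,\eta,\nu) - \rho(x,\xi,\nu))^+ \le |\rho(x,\eta,\nu) - \rho(x,\zeta,\nu)|$. Multiplying by $v(x)\nu(x)$, summing over $x$, and integrating against $H_2(d\nu)$ --- all integrands being non-negative, so that Tonelli's theorem applies without issue --- the Lipschitz part of (A5) at radius $m$ bounds the left-hand side by $C_5(m)\|\eta - \zeta\| = C_5(m)\sum_{x \in V} v(x)(\eta(x)-\xi(x))^+$, as required. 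There is no serious obstacle in this argument; the only point demanding a little care is the choice $\zeta = \eta \wedge \xi$ rather than $\eta \vee \xi$, which is what simultaneously keeps the $\X$-norm bounded by $m$ and provides a configuration dominated by $\eta$, so that the positive part can be replaced by an absolute value with the correct sign.
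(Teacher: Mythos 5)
Your proposal is correct and takes essentially the same approach as the paper: both compare $\eta$ with the pointwise minimum $\eta\wedge\xi$, apply the monotonicity in (A1)/(A5) to kill the contribution from $\xi$, and invoke the Lipschitz bounds on the ball of radius $m$. The only cosmetic difference is that the paper first splits $(B_0(x,\eta)-B_0(x,\xi))^+$ subadditively and then observes that the second term vanishes, whereas you use monotonicity of $t\mapsto t^+$ directly; the inequality $(a-b)^+\le(a-c)^+$ for $b\ge c$ and the vanishing of $(B_0(x,\eta\wedge\xi)-B_0(x,\xi))^+$ are the same observation phrased differently.
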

\begin{proof}
 For given $\eta, \xi \in \X$ we define 
 \[
    \min(\eta,\xi)(x) = \begin{cases}
                     \xi(x), &\text{if }\eta(x) \geq \xi(x)
                     \\
                     \eta(x), &\text{if }\eta(x) < \xi(x),
                   \end{cases} \qquad x \in V.
  \]
  Then $\min(\eta,\xi) \leq \xi$ and $\|\eta-\min(\eta,\xi)\| = \sum_{x\in V}v(x)(\eta(x)-\xi(x))^+$. Using (A1), we obtain 
   for  $\eta, \xi \in \X$ satisfying $\| \eta \|, \| \xi \| \leq m$
  \begin{align*}
    \sum_{x\in V}&v(x)(B_0(x,\eta)-B_0(x,\xi))^+
    \\
    &\leq \sum_{x\in V} v(x)(B_0(x,\eta)-B_0(x,\min(\eta,\xi)))^+ 
     +\sum_{x\in V}v(x)(B_0(x,\min(\eta,\xi))-B_0(x,\xi))^+    
    \\ &\leq \sum_{x\in V} v(x)|B_0(x,\eta)-B_0(x,\min(\eta,\xi))|
    \\ &\leq C_1(m)\|\eta-\min(\eta,\xi)\|
    \\ &= C_1(m)\sum_{x\in V} v(x)(\eta(x)-\xi(x))^+.
  \end{align*}
  Analogously, using (A5), we find that
  \begin{align*}
      &\ \int_{\X \backslash \{0\}} \sum_{x \in V} v(x)\nu(x)(\rho(x,\eta,\nu) - \rho(x,\xi,\nu))^+ H_2(d\nu) 
      \\ &\leq \int_{\X \backslash \{0\}} \sum_{x \in V} v(x)\nu(x)(\rho(x,\eta,\nu) - \rho(x,\min(\eta,\xi),\nu))^+ H_2(d\nu) 
      \\ &\ \ \ + \int_{\X \backslash \{0\}} \sum_{x \in V} v(x)\nu(x)(\rho(x,\min(\eta,\xi),\nu) - \rho(x,\xi,\nu))^+ H_2(d\nu) 
    \\ &\leq \int_{\X \backslash \{0\}} \sum_{x \in V} v(x)\nu(x)|\rho(x,\eta,\nu) - \rho(x,\min(\eta,\xi),\nu)| H_2(d\nu) 
    \\ &\leq C_5(m) \| \eta - \min(\eta,\xi)\|
    \\ &= C_5(m)\sum_{x \in E}V(x)(\eta(x) - \xi(x))^+.
  \end{align*}
  This proves the assertion.
\end{proof}
The following is the main result of this section.
\begin{Theorem}\label{comparison_ic}
 Suppose that conditions (A1) -- (A5) are satisfied. 
 Let $(\eta_t)_{t \geq 0}$ and $(\xi_t)_{t \geq 0}$ be two weak solutions to \eqref{SDE} defined on the same stochastic basis. Then for each $m \in \N$ and $t \geq 0$ it holds that
 \begin{align*}
  &\ \E\left[ \sum_{x \in V}v(x)(\eta_{t \wedge \tau_m}(x)-\xi_{t \wedge \tau_m}(x))^+ \right]
  \\ &\qquad \qquad \leq \E\left[ \sum_{x \in V}v(x)(\eta_0(x) - \xi_0(x))^+ \right]
  e^{(C_1(m) + 2C_4 + C_5(m))t},
 \end{align*}
 where $\tau_m := \tau_m(\eta) \wedge \tau_m(\xi)$
 is a sequence of stopping times with $\tau_m(\eta),\tau_m(\xi)$ defined as in Lemma \ref{lemma: localization}.
 In particular, if $\P[\eta_0 \leq \xi_0] = 1$,
 then $\P[ \eta_t \leq \xi_t, \ \ t \geq 0] = 1$.
\end{Theorem}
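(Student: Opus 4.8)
The plan is to imitate, line by line, the proof of Theorem~\ref{thm: uniqueness}, but with the even Yamada--Watanabe approximations of $|z|$ replaced by one-sided smooth approximations of $z^+:=\max\{z,0\}$, and to exploit the monotonicity hypotheses in (A1), (A3) and (A5) to discard the contributions of the ``wrong sign''. Concretely, fix $x\in V$, set $\zeta_t:=\eta_t-\xi_t$, and write down the stochastic equation for $\zeta_t(x)$ exactly as in the proof of Theorem~\ref{thm: uniqueness}; since the two weak solutions are driven by the same noise terms, the stochastic integrals subtract. Choose $\psi_k\in C^2(\R)$ with $\psi_k\geq 0$, $\psi_k(z)=0$ for $z\leq 0$, $\psi_k(z)\nearrow z^+$ as $k\to\infty$, $\psi_k'(z)\in[0,1]$ with $\psi_k'(z)=0$ for $z\leq 0$, and $\psi_k''(z)\,z\leq 2/k$ for $z>0$; such $\psi_k$ are obtained by integrating twice a bump $\rho_k$ supported in an interval $(a_k,a_{k-1})\subset(0,\infty)$ with $0\leq\rho_k(z)\leq 2/(kz)$ and $\int\rho_k=1$. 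Writing $D_h\psi_k(z):=\psi_k(z+h)-\psi_k(z)$, these properties give: $D_h\psi_k(z)\leq h^+$ for all $z,h\in\R$; $D_h\psi_k(z)=\psi_k'(z)=0$ whenever $z\leq 0$ and $h\leq 0$; and $0\leq D_h\psi_k(z)-\psi_k'(z)h\leq\min\{h^2/(kz),\,h\}$ when $z>0$ and $h\geq 0$.

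Applying It\^o's formula to $\psi_k(\zeta_t(x))$ produces $\psi_k(\zeta_t(x))=\psi_k(\zeta_0(x))+\sum_{j=1}^5\mathcal{R}_j(t)+\mathcal{M}(t)$ with the same structure as in Theorem~\ref{thm: uniqueness}, and $(\mathcal{M}(t\wedge\tau_m))_{t\geq 0}$ is a true martingale by the same computation deferred to the appendix, using the localization bounds $\|\eta_{s-}\|,\|\xi_{s-}\|\leq m$ and $\eta_{s-}(x),\xi_{s-}(x)\leq m/v(x)$ on $[0,\tau_m]$ from Lemma~\ref{lemma: localization}. The $\mathcal{R}_j(t\wedge\tau_m)$ are then bounded as follows. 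Since $\psi_k'\geq 0$ and $B_1(x,\cdot)$ is non-decreasing, the $B_1$-part of $\mathcal{R}_1$ is $\geq 0$ (when $\zeta_s(x)>0$ both factors are nonnegative; when $\zeta_s(x)\leq 0$ one has $\psi_k'(\zeta_s(x))=0$), so $\psi_k'\in[0,1]$ gives $\mathcal{R}_1(t\wedge\tau_m)\leq\int_0^{t\wedge\tau_m}(B_0(x,\eta_s)-B_0(x,\xi_s))^+\,ds$. The term $\mathcal{R}_2$ is treated verbatim and is $\leq 2C_2(x)t/k$. For $\mathcal{R}_3$: by (A3), on $\{\zeta_{s-}(x)\leq 0\}$ one has $\Delta_0(x,s)\leq 0$, hence $\zeta_{s-}(x)+\Delta_0(x,s)\leq 0$ and the integrand vanishes; on $\{\zeta_{s-}(x)>0\}$ one has $\Delta_0(x,s)\geq 0$ and the inequalities above reproduce the bound on $\mathcal{R}_3$ from Theorem~\ref{thm: uniqueness} with $|\eta_{s-}(x)-\xi_{s-}(x)|$ replaced by $(\eta_{s-}(x)-\xi_{s-}(x))^+$. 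For $\mathcal{R}_4$ and $\mathcal{R}_5$, using $D_h\psi_k(z)\leq h^+$, integrating out $u$, and using the Lipschitz bound together with monotonicity of $g$ (resp.\ (A5)), one obtains
\[
 \mathcal{R}_4(t\wedge\tau_m)\leq\sum_{y\in V\setminus\{x\}}\Big(\int_{\X\setminus\{0\}}\nu(x)H_1(y,d\nu)\Big)C_3(y)\int_0^{t\wedge\tau_m}(\eta_{s-}(y)-\xi_{s-}(y))^+\,ds,
\]
\[
 \mathcal{R}_5(t\wedge\tau_m)\leq\int_0^{t\wedge\tau_m}\int_{\X\setminus\{0\}}\nu(x)\big(\rho(x,\eta_{s-},\nu)-\rho(x,\xi_{s-},\nu)\big)^+H_2(d\nu)\,ds.
\]

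Taking expectations in the It\^o identity, letting $k\to\infty$ (monotone convergence, $\psi_k(\zeta)\nearrow\zeta^+$, so in particular $\mathcal{R}_2$ and the $\|\nu\|\leq 1$-part of $\mathcal{R}_3$ disappear), multiplying by $v(x)$ and summing over $x\in V$, I would invoke Lemma~\ref{lemma: monotonicity} to bound $\sum_x v(x)(B_0(x,\eta_s)-B_0(x,\xi_s))^+\leq C_1(m)\sum_x v(x)(\eta_s(x)-\xi_s(x))^+$ and the $\mathcal{R}_5$-sum by $C_5(m)\sum_x v(x)(\eta_{s-}(x)-\xi_{s-}(x))^+$, and use the constants in (A4) to bound each of the $\mathcal{R}_3$- and $\mathcal{R}_4$-sums by $C_4\int_0^{t\wedge\tau_m}\sum_x v(x)(\eta_{s-}(x)-\xi_{s-}(x))^+\,ds$. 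Replacing $s-$ by $s$ under $ds$ and using $\int_0^{t\wedge\tau_m}(\cdot)_s\,ds\leq\int_0^t(\cdot)_{s\wedge\tau_m}\,ds$, with $g_m(t):=\E[\sum_x v(x)(\eta_{t\wedge\tau_m}(x)-\xi_{t\wedge\tau_m}(x))^+]$ one arrives at $g_m(t)\leq g_m(0)+(C_1(m)+2C_4+C_5(m))\int_0^t g_m(s)\,ds$, and Gronwall's lemma gives the stated inequality. If $\P[\eta_0\leq\xi_0]=1$ then $g_m(0)=0$, hence $g_m\equiv 0$, so $\eta_{t\wedge\tau_m}(x)\leq\xi_{t\wedge\tau_m}(x)$ a.s.\ for every $x\in V$ and $t\geq 0$; letting $m\to\infty$ (using $\tau_m\nearrow\infty$) and then running over a countable dense set of times together with right-continuity of the paths yields $\P[\eta_t\leq\xi_t\ \forall t\geq 0]=1$.

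The main obstacle is the sign bookkeeping: one must match the sign of each jump increment $\Delta_0(\cdot,s)$, $\Delta_1(\cdot,s)$ to that of $\zeta_{s-}(x)$, through (A3) and (A5), so that the contributions from $\{\zeta_{s-}(x)\leq 0\}$ and the $B_1$-drift drop out, and then verify that the surviving contributions are precisely those controlled by Lemma~\ref{lemma: monotonicity} and by the summability conditions in (A4). Once the one-sided functions $\psi_k$ and their estimates are in place, the remainder is essentially a transcription of the proof of Theorem~\ref{thm: uniqueness}.
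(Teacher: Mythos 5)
Your proposal is correct and follows essentially the same route as the paper: one-sided smooth approximations of $z^+$ (your $\psi_k$ are exactly the paper's $\phi_k$), It\^o's formula for $\psi_k(\zeta_t(x))$, the same sign analysis on the jump increments and the $B_1$-drift using (A1), (A3), (A5), then Lemma~\ref{lemma: monotonicity}, summation over $x$, and Gronwall. The only cosmetic differences are that you spell out the construction of $\psi_k$ and the sharper estimate $D_h\psi_k(z)\leq h^+$ (the paper uses $\leq |h|$), and you make the final passage $m\to\infty$ explicit.
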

\begin{proof}
 Define $\zeta_t := \eta_t - \xi_t$ and fix $x \in V$. 
 Let $\phi_k: \R \longrightarrow \R_+$ be a sequence of twice continuously differentiable functions such that for each $k \geq 1$:
 \begin{enumerate} \label{paraphernalia}
     \item[(i)] $\phi_k(z) \nearrow z^+ := \max\{0,z\}$ as $k \to \infty$ for $z \geq 0$,
     \item[(ii)] $\phi_k(z) = \phi_k'(z) = \phi_k''(z) = 0$ for $z \leq 0$,
     \item[(iii)] $\phi_k'(z) \in [0,1]$ for $z \geq 0$,
     \item[(iv)] $\phi_k''(z)z \leq 2/k$ holds for all $z \geq 0$.
 \end{enumerate} 
 Note that the sequence in Thm. \ref{thm: uniqueness} approximates the absolute value function, while the function above approximates the rectified linear unit, had has been previously used in, e.g., \cite{FL10, Ma13, BLP15, FJR19b}. 
 
 To simplify the notation below, we set
 $D_h\phi_k(z) := \phi_k(z+h) - \phi_k(z)$ with $z,h \in \R$. Using the mean-value theorem one can check that \eqref{eq: estimate on phik} holds for all $z,h \in \R$.
 Applying the It\^{o} formula to $\zeta_t(x)$ gives
 \begin{align}\label{decomp_monotonicity}
 \phi_k(\zeta_t(x)) = \phi_k(\zeta_0(x)) 
     + \sum_{j=1}^5 \mathcal{R}_j(t) + \mathcal{M}(t),
 \end{align}
 where the processes $\mathcal{R}_1, \dots, \mathcal{R}_5, \mathcal{M}$ are given as in the proof of Theorem \ref{thm: uniqueness} and, in particular, also depend on the fixed value $x$.
 Let $\tau_m := \tau_m(\eta) \wedge \tau_m(\xi)$ with $\tau_m(\eta), \tau_m(\xi)$ defined in Lemma \ref{lemma: localization}. Then $\tau_m \longrightarrow \infty$ and \eqref{eq: boundedness by localization} holds. The same arguments as in the appendix prove that $(\mathcal{M}(t \wedge \tau_m))_{t \geq 0}$ is a martingale for each $k$ and each $m$. Below we will show that 
  \begin{align*}
   \mathcal{R}_1(t \wedge \tau_m) &\leq \int_0^{t \wedge \tau_m} (B_0(x,\eta_s)-B_0(x,\xi_s))^+ ds
   \\ \mathcal{R}_2(t \wedge \tau_m) &\leq C_2(x) \frac{2t}{k},
    \\ \mathcal{R}_3(t \wedge \tau_m) &\leq \frac{C_3(x)t}{k}\int_{ \{ \| \nu \| \leq 1\} \backslash \{0\}} \nu(x)^2 H_1(x,d\nu) 
   \\ &\qquad + C_3(x)\left( \int_{\{\| \nu \| > 1\}} \nu(x)H_1(x,d\nu) \right) \int_0^{t \wedge \tau_m}(\eta_{s-}(x) - \xi_{s-}(x))^+ ds,
   \\ \mathcal{R}_4(t \wedge \tau_m) &\leq \sum_{y \in V \backslash \{x\}} \left( \int_{\X \backslash \{0\}} \nu(x) H_1(y,d\nu) \right) C_3(y) \int_0^{t \wedge \tau_m}(\eta_{s-}(y) - \xi_{s-}(y))^+ ds,
   \\ \mathcal{R}_5(t \wedge \tau_m) &\leq \int_0^{t \wedge \tau_m} \int_{\X \backslash \{0\}} \nu(x)  (\rho(x,\eta_{s-},\nu) - \rho(x,\xi_{s-}, \nu))^+ ds H_2(d\nu)
  \end{align*}
  Taking then expectations in \eqref{decomp_monotonicity} and using the above estimates gives
  \begin{align*}
      \E[ \phi_k(\zeta_{t \wedge \tau_m}(x))]
      &= \E[ \phi_k(\zeta_0(x)) ]
      + \E \left[ \sum_{j=1}^{5}\mathcal{R}_j(t \wedge \tau_m) \right]
      \\
      &\leq \E[ \phi_k(\zeta_0(x)) ]
      + \E\left[ \int_0^{t \wedge \tau_m} (B_0(x,\eta_s) - B_0(x,\xi_s))^+ ds \right]
      \\
      &\ \ \ + C_2(x) \frac{2t}{k}
      + \frac{C_3(x)t}{k}\int_{ \{ \| \nu \| \leq 1\} \backslash \{0\}} \nu(x)^2 H_1(x,d\nu) 
      \\ &\ \ \ + C_3(x)\left( \int_{\{\| \nu \| > 1\}} \nu(x)H_1(x,d\nu) \right) \E\left[ \int_0^{t \wedge \tau_m}(\eta_{s-}(x) - \xi_{s-}(x))^+ ds \right]
      \\ &\ \ \ + \sum_{y \in V \backslash \{x\}} \left( \int_{\X \backslash \{0\}} \nu(x) H_1(y,d\nu) \right) C_3(y) \E\left[\int_0^{t \wedge \tau_m}(\eta_{s-}(y) - \xi_{s-}(y))^+ ds \right]
      \\ &\ \ \ + \E\left[\int_0^{t \wedge \tau_m} \int_{\X \backslash \{0\}} \nu(x) (\rho(x,\eta_{s-}, \nu) - \rho(x,\xi_{s-},\nu))^+ H_2(d\nu)ds \right].
  \end{align*}
  Letting first $k \to \infty$ and then taking the $v$-weighted sum over  $x\in V$ we get by (i) 
  \begin{align*}
      \E&\left[\sum_{x \in V}v(x) (\eta_{t \wedge \tau_m}(x) - \xi_{t \wedge \tau_m}(x))^+\right]
      \\ &\leq \sum_{x\in V} v(x) \E\left[(\eta_0(x)-\xi_0(x))^+\right] 
      + \E\left[\int_0^{t \wedge \tau_m}\sum_{x\in V} v(x)(B_0(x,\eta_s)-B_0(x,\xi_s))^+ ds\right]
      \\ &\ \ \ + 2C_4 \E\left[\int_0^{t \wedge \tau_m} \sum_{x\in V} v(x)(\eta_{s-}(x)-\xi_{s-}(x))^+ ds\right]
      \\ &\ \ \ +\E\left[\int_0^{t \wedge \tau_m}\int_{\X \setminus\{0\}}\sum_{x\in V}v(x)\nu(x)(\rho(x,\eta_{s-},\nu) - \rho(x,\xi_{s-},\nu))^+ H_2(d\nu) ds \right]
      \\ &\leq \E\left[\sum_{x\in V} v(x) (\eta_0(x)-\xi_0(x))^+\right]
      \\ &\ \ \ + \left(C_1(m)+2C_4+C_5(m)\right)\E\left[\int_0^{t \wedge \tau_m}\sum_{x\in V} v(x)(\eta_{s-}(x)-\xi_s(x))^+ ds\right]
  \end{align*}
  where we have used Lemma \ref{lemma: monotonicity}. The assertion follows from the Gronwall lemma. Hence it remains to prove the estimates for $\mathcal{R}_j(t \wedge \tau_m)$, $j = 1,\dots, 5$.
  
  The first estimate above follows directly by the properties of $\phi^\prime$. Indeed, it follows from (A1) and (ii) that $\phi_k'(\zeta_s(x))(B_1(x,\eta_s(x)) - B_1(x,\xi_s(x))) \geq 0$ holds a.s. for $s \in [0,t \wedge \tau_m]$. Thus we obtain
  \begin{align*}
  \mathcal{R}_1(t \wedge \tau_m) & =   \int_0^{t \wedge \tau_m} \phi_k'(\zeta_{s-}(x))(B(x,\eta_s) - B(x,\xi_s))ds
   \\ &= \int_0^{t \wedge \tau_m} \phi_k'(\zeta_{s-}(x))(B_0(x,\eta_s) - B_0(x,\xi_s))ds
   \\ &\ \ \ - \int_0^{t \wedge \tau_m} \phi_k'(\zeta_{s-}(x))(B_1(x,\eta_s(x)) - B_1(x,\xi_s(x)))ds
   \\ &\leq \int_0^{t \wedge \tau_m} \phi_k'(\zeta_{s-}(x))(B_0(x,\eta_s) - B_0(x,\xi_s))ds
   \\ &\leq \int_0^{t \wedge \tau_m}(B_0(x,\eta_s) - B_0(x,\xi_s))^+ds.
  \end{align*}
  The desired estimates for $\mathcal{R}_2, \mathcal{R}_3$, and $\mathcal{R}_4$ can be shown in exactly the same way as in the proof of Theorem \ref{thm: uniqueness}. Let us now consider the term $\mathcal{R}_5$. By (ii) we have 
  \begin{align*}
       &\ \int_0^{\infty} D_{\Delta_1(x,s)}\phi_k(\zeta_{s-}(x)) du
       \\ &\leq \int_{0}^{\infty} \1_{ \{ \rho(x,\eta_{s-},\nu) \geq \rho(x,\xi_{s-}, \nu) \} } D_{\Delta_1(x,s)}\phi_k(\zeta_{s-}(x)) du 
       \\ &\leq \int_{\rho(x,\xi_{s-},\nu)}^{\rho(x,\eta_{s-},\nu)}\1_{ \{ \rho(x,\eta_{s-},\nu) \geq \rho(x,\xi_{s-}, \nu) \} } \nu(x)du
      \\ &= (\rho(x, \eta_{s-},\nu) - \rho(x,\xi_{s-},\nu))^+ \nu(x).
  \end{align*}
  This implies the desired estimate for $\mathcal{R}_5(t\wedge \tau_m)$.
  Hence we have shown all the desired inequalities for $\mathcal{R}_j(t\wedge \tau_m)$, $j = 1,\dotsc, 5$ and the proof of the theorem is complete.
\end{proof}

Theorem \ref{comparison_ic} can be generalized to the case when $(\eta_t)_{t \geq 0}$ and $(\xi_t)_{t \geq 0}$ are solutions to equations with different 
functions $ B,  B_0,  B_1,  c,  g, \rho$. Here we only give a simple version with different drifts. 
\begin{Corollary}\label{thm comparison different drift}
Let $(B, B_0, B_1, c, g, \rho)$ and $(\widetilde B,\widetilde  B_0, \widetilde B_1, c,  g,  \rho)$ be $C_{\overline{1,6}}$-admissible tuples. On the same stochastic basis let $(\eta_t)_{t \geq 0}$ satisfy \eqref{SDE} and let $(\xi_t)_{t \geq 0}$ satisfy 
\begin{align}\label{SDE different drift}
    \xi_t(x) &= \xi_0(x) + \int_0^t \widetilde B(x,\xi_s)ds 
    + \int_0^t \sqrt{2c(x,\xi_s(x))}dW_s(x)
    \\ \notag &\ \ \ + \int_0^t \int_{\X \backslash \{0\}}\int_{\R_+} \nu(x) \mathbbm{1}_{ \{ u \leq g(x,\xi_{s-}(x))\} } \widetilde{N}_x(ds,d\nu,du)
    \\ \notag &\ \ \ + \sum_{y \in V \backslash \{x\}}\int_0^t \int_{\X \backslash \{0\}}\int_{\R_+} \nu(x) \mathbbm{1}_{ \{ u \leq g(y,\xi_{s-}(y))\} } N_y(ds,d\nu,du)
    \\ \notag &\ \ \ + \int_{0}^t \int_{\X \backslash \{0\}} \int_{\R_+}\nu(x) \mathbbm{1}_{ \{ u \leq \rho(x,\xi_{s-},\nu)\} } M(ds,d\nu,du).
\end{align}
 Assume that for all $\alpha, \beta \in \X$, $\alpha \leq \beta$, and $x \in V$ one has $B(x, \alpha) \leq \widetilde B (x, \beta)$. Then for each $m \in \N$ and $t \geq 0$ it holds that
 \begin{align*}
  &\ \E\left[ \sum_{x \in V}v(x)(\eta_{t \wedge \tau_m}(x)-\xi_{t \wedge \tau_m}(x))^+ \right]
  \\ &\qquad \qquad \leq \E\left[ \sum_{x \in V}v(x)(\eta_0(x) - \xi_0(x))^+ \right]
  e^{(C_1(m) + 2C_4 + C_5(m))t},
 \end{align*}
 where $\tau_m := \tau_m(\eta) \wedge \tau_m(\xi)$
 is a sequence of stopping times with $\tau_m(\eta),\tau_m(\xi)$ defined as in Lemma \ref{lemma: localization}. In particular, if $\P[\eta_0 \leq \xi_0] = 1$, then $\P[ \eta_t \leq \xi_t, \  t \geq 0] = 1$.
\end{Corollary}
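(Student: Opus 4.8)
The plan is to copy the proof of Theorem~\ref{comparison_ic} essentially verbatim; the only genuinely new point is the treatment of the drift term, and everything else transcribes word for word because the coefficients $c$, $g$, $\rho$ are common to \eqref{SDE} and \eqref{SDE different drift}. Set $\zeta_t := \eta_t - \xi_t$, fix $x \in V$, and let $(\phi_k)_{k \geq 1}$ be the same sequence of twice continuously differentiable approximations of $z \mapsto z^+$ (properties (i)--(iv)) used in the proof of Theorem~\ref{comparison_ic}. It\^o's formula applied to $\zeta_t(x)$ gives
\[
 \phi_k(\zeta_t(x)) = \phi_k(\zeta_0(x)) + \sum_{j=1}^5 \mathcal{R}_j(t) + \mathcal{M}(t),
\]
where $\mathcal{R}_2, \dots, \mathcal{R}_5$ and the local martingale $\mathcal{M}$ are \emph{identical} to those occurring in the proof of Theorem~\ref{comparison_ic}, while
\[
 \mathcal{R}_1(t) = \int_0^t \phi_k'(\zeta_s(x))\bigl( B(x,\eta_s) - \widetilde B(x,\xi_s) \bigr)\, ds.
\]
With $\tau_m := \tau_m(\eta) \wedge \tau_m(\xi)$ as in Lemma~\ref{lemma: localization}, the argument from the appendix shows that $(\mathcal{M}(t \wedge \tau_m))_{t \geq 0}$ is a true martingale, and the bounds for $\mathcal{R}_2(t\wedge\tau_m), \dots, \mathcal{R}_5(t \wedge \tau_m)$ carry over unchanged, so it only remains to estimate $\mathcal{R}_1(t\wedge\tau_m)$.

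For this we route the comparison through the intermediate configuration $\min(\eta_s,\xi_s)$ from Lemma~\ref{lemma: monotonicity}, which satisfies $\min(\eta_s,\xi_s) \leq \xi_s$ and $\min(\eta_s,\xi_s) \leq \eta_s$. Since $\phi_k'$ vanishes on $(-\infty,0]$, only the event $\{\zeta_s(x) > 0\}$ matters, on which $\eta_s(x) > \xi_s(x)$ and hence $\min(\eta_s,\xi_s)(x) = \xi_s(x)$. Writing $B = B_0 - B_1$ from (A1) and applying the hypothesis $B(x,\alpha) \leq \widetilde B(x,\beta)$ (for $\alpha \leq \beta$) to $\alpha = \min(\eta_s,\xi_s) \leq \xi_s = \beta$, we get $B(x,\min(\eta_s,\xi_s)) \leq \widetilde B(x,\xi_s)$, whence on $\{\zeta_s(x)>0\}$
\[
 B(x,\eta_s) - \widetilde B(x,\xi_s) \leq B(x,\eta_s) - B(x,\min(\eta_s,\xi_s)) = \bigl(B_0(x,\eta_s) - B_0(x,\min(\eta_s,\xi_s))\bigr) - \bigl(B_1(x,\eta_s(x)) - B_1(x,\xi_s(x))\bigr),
\]
and the last bracket is nonnegative because $B_1(x,\cdot)$ is non-decreasing and $\eta_s(x) > \xi_s(x)$. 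Using $\phi_k' \in [0,1]$ together with the monotonicity of $B_0$ (so that $B_0(x,\eta_s) - B_0(x,\min(\eta_s,\xi_s)) \geq 0$), we conclude
\[
 \mathcal{R}_1(t \wedge \tau_m) \leq \int_0^{t \wedge \tau_m} \bigl( B_0(x,\eta_s) - B_0(x,\min(\eta_s,\xi_s)) \bigr)^+ ds .
\]

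Collecting all estimates, taking expectations, letting $k \to \infty$, multiplying by $v(x)$ and summing over $x \in V$, and invoking Lemma~\ref{lemma: monotonicity} with the pair $\eta_s$, $\min(\eta_s,\xi_s)$ (using $\|\eta_s\|,\|\xi_s\| \leq m$ on $[0,\tau_m]$ and the identity $\sum_{x} v(x)(\eta_s(x) - \min(\eta_s,\xi_s)(x))^+ = \sum_x v(x)(\eta_s(x) - \xi_s(x))^+$) together with (A4), exactly as in the proof of Theorem~\ref{comparison_ic}, yields
\[
 \E\!\left[ \sum_{x \in V}v(x)(\eta_{t \wedge \tau_m}(x)-\xi_{t \wedge \tau_m}(x))^+ \right] \leq \E\!\left[ \sum_{x \in V}v(x)(\eta_0(x) - \xi_0(x))^+ \right] + \bigl(C_1(m) + 2C_4 + C_5(m)\bigr)\!\int_0^t \E\!\left[ \sum_{x \in V}v(x)(\eta_{s \wedge \tau_m}(x)-\xi_{s\wedge\tau_m}(x))^+ \right] ds,
\]
and the Gronwall lemma gives the claimed inequality. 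If in addition $\P[\eta_0 \leq \xi_0] = 1$, the right-hand side vanishes, so $\eta_{t\wedge\tau_m} \leq \xi_{t\wedge\tau_m}$ a.s.\ for every $m$ and $t$; since $\tau_m \nearrow \infty$ a.s.\ by Lemma~\ref{lemma: localization} and the paths are c\`adl\`ag, letting $m \to \infty$ gives $\P[\eta_t \leq \xi_t,\ t \geq 0] = 1$. The main (and only) subtlety is the reduction of $\mathcal{R}_1$: one must interpolate through $\min(\eta_s,\xi_s)$ so the \emph{cross}-hypothesis $B(x,\alpha)\le\widetilde B(x,\beta)$ can be combined with the monotonicity of $B_0$ and $B_1$ built into (A1), noting that the $B_1$-cancellation is available precisely on the support $\{\zeta_s(x)>0\}$ of $\phi_k'$.
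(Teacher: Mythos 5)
Your proof is correct, and the overall scaffolding (It\^o on $\phi_k(\zeta_t(x))$, localisation with $\tau_m$, identical estimates for $\mathcal R_2,\dots,\mathcal R_5$ and the martingale, weighted sum, Gronwall, then $m\to\infty$) coincides with the paper's. The one place where you diverge is the reduction of $\mathcal R_1$. The paper simply applies the hypothesis $B(x,\alpha)\le\widetilde B(x,\beta)$ with $\alpha=\beta=\xi_s$ to obtain $\widetilde B(x,\xi_s)\ge B(x,\xi_s)$, and since $\phi_k'\ge 0$ this reduces the $\mathcal R_1$ of the corollary directly to the $\mathcal R_1$ of Theorem~\ref{comparison_ic}, after which nothing else needs to be re-proved. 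You instead apply the hypothesis with $\alpha=\min(\eta_s,\xi_s)\le\xi_s=\beta$, decompose $B=B_0-B_1$, cancel the $B_1$-difference on $\{\zeta_s(x)>0\}$ by monotonicity of $B_1$, and then bound the $B_0$-difference using the Lipschitz property of (A1) together with $\|\eta_s-\min(\eta_s,\xi_s)\|=\sum_x v(x)(\eta_s(x)-\xi_s(x))^+$. Both routes are valid and land on the same Gronwall inequality; the paper's swap at $\xi_s$ is shorter because it literally re-uses the finished $\mathcal R_1$-estimate from Theorem~\ref{comparison_ic}, whereas your interpolation through $\min(\eta_s,\xi_s)$ re-derives that estimate from scratch (which, as a side effect, shows you do not even need the $(\cdot)^+$ on the $B_0$-difference since it is automatically nonnegative). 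No gap; just a more roundabout handling of the single new term.
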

\begin{proof}
    The proof follows the same steps as the proof of Theorem \ref{comparison_ic}, the only difference is that 
    with these settings 
    \[
     \mathcal{R}_1(t) = \int_0^t \phi_k'(\zeta_s(x))\left( B(x,\eta_s) - \widetilde B(x,\xi_s) \right)ds,
    \]
    so we only need to note that 
    \[
    \int_0^t \phi_k'(\zeta_s(x))\left( B(x,\eta_s) - \widetilde B(x,\xi_s) \right)ds
    \leq 
    \int_0^t \phi_k'(\zeta_s(x))\left( B(x,\eta_s) -   B(x,\xi_s) \right)ds.
    \]
\end{proof}

Finally, we formulate an auxiliary comparison principle used for the construction of solutions of \eqref{SDE}. 
\begin{Theorem}\label{thm special comparison}
Let $|V| < \infty$ and let $(\widetilde B, \widetilde B_0, \widetilde B_1, c, \widetilde g, \widetilde \rho)$  
 be a $C_{\overline{1,6}}$-admissible tuple
and  for $ V' \subset V$ let 
 \begin{align}
     B(x, \alpha)   & = \widetilde   B (x, \alpha) \1\{x \in V' \}
    \label{Wagnis = venture1}
     \\
       g(x, \alpha (x))& =  \widetilde g(x, \alpha (x))\1\{x \in V' \},
           \\
        \rho (x, \alpha, \nu) & = \widetilde \rho (x, \alpha, \nu)\1\{x \in V' \}.
      \label{Wagnis = venture3}
 \end{align}
 On the same stochastic basis let $(\eta_t)_{t \geq 0}$ satisfy \eqref{SDE}
 and let $(\xi_t)_{t \geq 0}$ satisfy 
\begin{align}\label{SDE special comparison}
    \xi_t(x) &= \xi_0(x) + \int_0^t \widetilde B(x,\xi_s)ds 
    + \int_0^t \sqrt{2c(x,\xi_s(x))}dW_s(x)
    \\ \notag &\ \ \ + \int_0^t \int_{\X \backslash \{0\}}\int_{\R_+} \nu(x) \mathbbm{1}_{ \{ u \leq \widetilde g(x,\xi_{s-}(x))\} } \widetilde{N}_x(ds,d\nu,du)
    \\ \notag &\ \ \ + \sum_{y \in V \backslash \{x\}}\int_0^t \int_{\X \backslash \{0\}}\int_{\R_+} \nu(x) \mathbbm{1}_{ \{ u \leq \widetilde g(y,\xi_{s-}(y))\} } N_y(ds,d\nu,du)
    \\ \notag &\ \ \ + \int_{0}^t \int_{\X \backslash \{0\}} \int_{\R_+}\nu(x) \mathbbm{1}_{ \{ u \leq \widetilde \rho(x,\xi_{s-},\nu)\} } M(ds,d\nu,du),
\end{align}
 Assume further that $\P[\eta_0 \leq \xi_0] = 1$ and for $x \in V \setminus V' $, $\P[ \eta_0(x) = 0] = 1$. Then $\P[ \eta_t \leq \xi_t, \  t \geq 0] = 1$.
\end{Theorem}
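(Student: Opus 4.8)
My plan is to run the Yamada--Watanabe--type argument of Theorem~\ref{comparison_ic} essentially verbatim, the genuinely new input being a careful bookkeeping at the sites $x\in V\setminus V'$, where $\eta$ loses the coefficients $B,g,\rho$ while $\xi$ keeps $\widetilde B,\widetilde g,\widetilde\rho$.

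First I would set $\zeta_t:=\eta_t-\xi_t$, fix $x\in V$, subtract \eqref{SDE} and \eqref{SDE special comparison}, and apply It\^o's formula to $\phi_k(\zeta_t(x))$ with the sequence $(\phi_k)$ approximating $z\mapsto z^+$ (properties (i)--(iv) from the proof of Theorem~\ref{comparison_ic}), obtaining the decomposition $\phi_k(\zeta_t(x))=\phi_k(\zeta_0(x))+\sum_{j=1}^5\mathcal R_j(t)+\mathcal M(t)$, with $\mathcal R_1,\dots,\mathcal R_5$ the drift-, diffusion-, $\widetilde N_x$-, $N_y$- ($y\neq x$), and $M$-contributions and $\mathcal M$ a local martingale. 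After localising by $\tau_m:=\tau_m(\eta)\wedge\tau_m(\xi)$ (Lemma~\ref{lemma: localization}), the argument of the appendix shows that $(\mathcal M(t\wedge\tau_m))_{t\ge0}$ is a true martingale for each $k,m$, using (A2)--(A4) and the bounds $\eta_{s-}(x),\xi_{s-}(x)\le m/v(x)$, $\|\eta_{s-}\|,\|\xi_{s-}\|\le m$ on $[0,\tau_m]$.

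Next comes the estimation of the $\mathcal R_j(t\wedge\tau_m)$, split according to whether $x\in V'$ or $x\in V\setminus V'$. For $x\in V'$ the coefficients of $\eta$ coincide with the tilded ones, so each $\mathcal R_j$ is exactly the term appearing in the proof of Theorem~\ref{comparison_ic} and inherits the same bounds: $\mathcal R_1\le\int_0^{t\wedge\tau_m}(\widetilde B_0(x,\eta_s)-\widetilde B_0(x,\xi_s))^+\,ds$ (because $\widetilde B_1$ is nondecreasing and $\phi_k'\ge0$), $\mathcal R_2\le 2C_2(x)t/k$, the two $\widetilde g$-bounds for $\mathcal R_3,\mathcal R_4$ obtained by splitting $\{\|\nu\|\le1\}$ and $\{\|\nu\|>1\}$ and combining (ii)--(iv) with $|\widetilde g(y,a)-\widetilde g(y,b)|\le C_3(y)|a-b|$, and $\mathcal R_5\le\int_0^{t\wedge\tau_m}\int_{\X\setminus\{0\}}\nu(x)(\widetilde\rho(x,\eta_{s-},\nu)-\widetilde\rho(x,\xi_{s-},\nu))^+H_2(d\nu)\,ds$. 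For $x\in V\setminus V'$ the initial term drops out, $\phi_k(\zeta_0(x))=\phi_k(-\xi_0(x))=0$, since $\eta_0(x)=0$ and $\phi_k\equiv0$ on $(-\infty,0]$. Moreover, because $\phi_k$ is nondecreasing with $\phi_k=\phi_k'=\phi_k''\equiv0$ on $(-\infty,0]$: $\mathcal R_2$ still obeys $\mathcal R_2\le 2C_2(x)t/k$ ($c$ is unchanged); the terms carried by $\xi$ but not by $\eta$ --- the self-branching $\widetilde N_x$-term, the immigration $M$-term, and the jumps $N_y$ with $y\in V\setminus V'$ --- all produce downward jumps of $\zeta$ and hence contribute $\le0$ to $\mathcal R_3,\mathcal R_5,\mathcal R_4$ respectively (for the compensated $\widetilde N_x$-term one retains the usual compensator correction, bounded by $h^2/(k|\zeta_{s-}(x)|)$ on $\{\zeta_{s-}(x)>0\}$ and equal to $0$ elsewhere); the jumps $N_y$ with $y\in V'$ are treated as the $\mathcal R_4$-terms of Theorem~\ref{comparison_ic}; and the drift is $\mathcal R_1=\int_0^{t\wedge\tau_m}\phi_k'(\zeta_s(x))(-\widetilde B_0(x,\xi_s)+\widetilde B_1(x,\xi_s(x)))\,ds$, whose first part is $\le0$ as $\widetilde B_0\ge0$, while the second part --- the competition term acting on $\xi$ but not on $\eta$ --- is controlled by exploiting that $\phi_k'$ is supported on $\{\eta_s(x)>\xi_s(x)\}$ together with the monotonicity of $\widetilde B_1$ and the hypothesis $\eta_0(x)=0$.

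Finally, I would multiply by $v(x)$, sum over the finite set $V$, invoke Lemma~\ref{lemma: monotonicity} for the $\widetilde B_0$- and $\widetilde\rho$-sums and condition (A4) for the $N_y$-sums, take expectations so that $\mathcal M$ vanishes, and let $k\to\infty$ with $\phi_k(\zeta_{t\wedge\tau_m}(x))\uparrow(\eta_{t\wedge\tau_m}(x)-\xi_{t\wedge\tau_m}(x))^+$ by (i). This gives
\[
 \E\Big[\sum_{x\in V}v(x)(\eta_{t\wedge\tau_m}(x)-\xi_{t\wedge\tau_m}(x))^+\Big]\le\big(C_1(m)+2C_4+C_5(m)\big)\int_0^t\E\Big[\sum_{x\in V}v(x)(\eta_{s\wedge\tau_m}(x)-\xi_{s\wedge\tau_m}(x))^+\Big]\,ds,
\]
with zero initial datum because $\eta_0\le\xi_0$. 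Gronwall's inequality forces the left-hand side to vanish for every $m$ and $t$; letting $m\to\infty$ (Lemma~\ref{lemma: localization}) and using right-continuity of paths yields $\P[\eta_t\le\xi_t\ \forall t\ge0]=1$. The step I expect to be the main obstacle is precisely the analysis at the sites $x\in V\setminus V'$: one must check that every extra term of $\xi$ relative to $\eta$ --- above all the competition contribution $\widetilde B_1(x,\xi_s(x))$ and the compensator correction of the self-branching term --- either has the favourable sign for $\zeta=\eta-\xi$ or, thanks to $\eta_0(x)=0$ and the support properties of $\phi_k$, can be absorbed into the linear-in-$\|(\eta-\xi)^+\|$ Gronwall estimate rather than producing an uncontrolled additive term.
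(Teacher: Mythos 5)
Your proposal takes a genuinely different route from the paper's, and the difference leaves a gap at the sites $x\in V\setminus V'$. The paper's first step is the simplification you are missing: for $x\in V\setminus V'$ the coefficients $B(x,\cdot)$, $g(x,\cdot)$, $\rho(x,\cdot,\cdot)$ all vanish and $\eta_0(x)=0$ a.s., and the paper asserts $\eta_t(x)=0$ a.s.\ for all $t\ge 0$; consequently $\zeta_t(x)=-\xi_t(x)\le 0$ and these sites contribute nothing, so the It\^o/Gronwall argument is run only over $x\in V'$. There the coefficients of $\eta$ and $\xi$ agree term by term, the only new piece being $\mathcal D_4$, which the paper bounds by $\mathcal R_4$ because the $N_y$-jumps with $y\in V\setminus V'$ are downward for $\zeta$ (a point you also make correctly).

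You, by contrast, apply It\^o at every $x\in V$ and try to estimate the $x\notin V'$ terms directly. That is where it breaks: the drift contribution $\phi_k'(\zeta_s(x))\,\widetilde B_1(x,\xi_s(x))$ at such a site has the unfavourable sign, and condition (A1) gives no Lipschitz control whatsoever on $\widetilde B_1$ (it is only continuous, non-decreasing, and $\widetilde B_1(x,0)=0$), so this term cannot be absorbed into a linear Gronwall estimate in $\sum_x v(x)(\eta_s(x)-\xi_s(x))^+$. Your suggested control -- ``$\phi_k'$ supported on $\{\eta_s(x)>\xi_s(x)\}$, monotonicity of $\widetilde B_1$, $\eta_0(x)=0$'' -- goes the wrong way: on that set monotonicity only yields $\widetilde B_1(x,\xi_s(x))\le \widetilde B_1(x,\eta_s(x))$, which does not help, and $\eta_0(x)=0$ alone does not force $\eta_s(x)=0$ for $s>0$, since the $N_y$-integrals with $y\in V'$ can deposit mass at $x$. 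The observation that actually closes the argument is precisely the paper's opening line: once one knows $\eta_t(x)\equiv 0$ on $V\setminus V'$, the factor $\phi_k'(\zeta_s(x))$ vanishes identically there (recall $\phi_k'\equiv 0$ on $(-\infty,0]$), so the problematic drift term never appears and the Gronwall sum need only run over $V'$. Without noting this first, your plan does not go through.
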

\begin{proof}
Since $(\widetilde B, \widetilde B_0, \widetilde B_1, c, \widetilde g, \widetilde \rho)$ is $C_{\overline{1,6}}$-admissible, the tuple
$( B,  B_0,   B_1, c,  g,  \rho)$ is a $C_{\overline{1,6}}$-admissible as well by \eqref{Wagnis = venture1}-\eqref{Wagnis = venture3}. For $x  \in V \setminus V'$ we have a.s. $\eta _t (x) = 0$ and hence $\P[ \eta_t(x) \leq \xi_t(x), \  t \geq 0] = \P[ 0 \leq \xi_t(x), \  t \geq 0]= 1$. Let $\{ \phi_k\} _{k \in \N}$ be the sequence of functions introduced in the proof of Theorem \ref{comparison_ic}. Set $\zeta_t := \eta_t - \xi_t$. Recall the notation $D_h\phi_k(z) := \phi_k(z+h) - \phi_k(z)$ for $z,h \in \R$.

 For $x \in V'$ by the the It\^{o} formula
 \begin{align}\label{Ito to zeta}
 \phi_k(\zeta_t(x)) = \phi_k(\zeta_0(x)) 
     + \sum_{j=1}^5 \mathcal{D}_j(t) + \mathcal{M}(t),
 \end{align}
 where 
 \begin{align*}
 \mathcal{D}_1(t) &= \int_0^t \phi_k'(\zeta_s(x))\left( B(x,\eta_s) - \widetilde B(x,\xi_s) \right)ds
 \\ \mathcal{D}_2(t) &= \frac{1}{2}\int_0^t \phi_k''(\zeta_s(x))\left( \sqrt{2c(x,\eta_s(x))} - \sqrt{2 c(x,\xi_s(x))}\right)^2 ds
 \\ \mathcal{D}_3(t) &= \int_0^t \int_{\X \backslash \{0\}} \int_{\R_+} \left( D_{\widetilde \Delta_0(x,s)}\phi_k(\zeta_{s-}(x)) - \phi_k'(\zeta_{s-}(x))\widetilde \Delta_0(x,s) \right) ds H_1(x,d\nu)du
  \\ \mathcal{D}_4(t) &= \sum_{y \in V \backslash \{x\}}\int_0^t \int_{\X \backslash \{0\}} \int_{\R_+} D_{\widetilde \Delta_0(y,s)}\phi_k(\zeta_{s-}(x)) ds H_1(y,d\nu)du
 \\ \mathcal{D}_5(t) &= \int_0^t \int_{\X \backslash \{0\}}\int_{\R_+} D_{\widetilde \Delta_1(x,s)}\phi_k(\zeta_{s-}(x)) ds H_2(d\nu)du
 \end{align*}
  with increments given by 
 \begin{align*}
     \widetilde \Delta_0(z,s) &= \nu(x) \left(\mathbbm{1}_{ \{ u \leq g(z,\eta_{s-}(z))\} } - \mathbbm{1}_{ \{ u \leq \widetilde  g(z,\xi_{s-}(z))\} }\right),
     \\ \widetilde \Delta_1(z,s) &= \nu(z) \left(\mathbbm{1}_{ \{ u \leq \rho(z,\eta_{s-},\nu)\} } - \mathbbm{1}_{ \{ u \leq \widetilde  \rho(z,\xi_{s-},\nu)\} } \right)
 \end{align*}
 and  
 \begin{align*}
     \mathcal{M}(t) &= \int_0^t \phi_k'(\zeta_s(x)) \left( \sqrt{2c(x,\eta_s(x))} - \sqrt{2 c(x,\xi_s(x))}\right)dW_s(x)
     \\ \notag &\ \ \ + \sum_{y \in V}\int_0^t \int_{\X \backslash \{0\}} \int_{\R_+}  D_{\widetilde \Delta_0(y,s)}\phi_k(\zeta_{s-}(x)) \widetilde{N}_y(ds,d\nu,du)
     \\ \notag &\ \ \ + \int_0^t \int_{\X \backslash \{0\}}\int_{\R_+} D_{\widetilde \Delta_1(x,s)}\phi_k(\zeta_{s-}(x)) \widetilde{M}(ds,d\nu,du).
 \end{align*}
 The process $(\mathcal{M}(t), t\geq 0)$ is a local martingale. 
For $x \in V'$, we use \eqref{Wagnis = venture1}-\eqref{Wagnis = venture3} to find that
\begin{equation}\label{die Narbe = scar}
     \mathcal{D}_j(t) =  \mathcal{R}_j(t),
     \ \ \ t \geq 0, j = 1,2,3,5.
\end{equation}
 where $\mathcal{R}_j(t)$ are given as in the proof of Theorem \ref{thm: uniqueness}. For $\mathcal{D}_4(t)$ we write 
    \begin{align*}
         \mathcal{D}_4(t) &= \sum_{y \in V  \setminus V'}\int_0^t \int_{\X \backslash \{0\}} \int_{\R_+} \big[ \phi_k(\zeta_{s-}(x) +  \widetilde \Delta_0(y,s))
          - \phi_k(\zeta_{s-}(x) ) \big]ds H_1(y,d\nu)du
          \\ &\qquad + \sum_{y \in V ' \backslash \{x\} }\int_0^t \int_{\X \backslash \{0\}} \int_{\R_+} \big[ \phi_k(\zeta_{s-}(x) +  \widetilde \Delta_0(y,s))
          - \phi_k(\zeta_{s-}(x) ) \big]ds H_1(y,d\nu)du
    \end{align*}
    Recall that $\Delta _0, \Delta _1$ were introduced in the proof of Theorem \ref{thm: uniqueness}. For $y \in V \setminus V'$ for $u > 0$
\begin{align*}
   \widetilde \Delta_0(y,s) = -\nu(x)  \mathbbm{1}_{ \{ u \leq \widetilde  g(y,\xi_{s-}(y))\} } \leq 0= 
\Delta_0(y,s) 
\end{align*}
whereas for $y \in V ' \backslash \{x\}$ we have $\widetilde \Delta_0(y,s) = \Delta_0(y,s)$. Since $\phi_k$ is non-decreasing we arrive at 
    \begin{align*}
         \mathcal{D}_4(t) &  \leq  \sum_{y \in V  \setminus V'}\int_0^t \int_{\X \backslash \{0\}} \int_{\R_+} \big[ \phi_k(\zeta_{s-}(x) +    \Delta_0(y,s))
          - \phi_k(\zeta_{s-}(x) ) \big]ds H_1(y,d\nu)du
          \\
          & + \sum_{y \in V ' \backslash \{x\} }\int_0^t \int_{\X \backslash \{0\}} \int_{\R_+} \big[ \phi_k(\zeta_{s-}(x) +  \Delta_0(y,s))
          - \phi_k(\zeta_{s-}(x) ) \big]ds H_1(y,d\nu)du
          \\ &= \mathcal{R} _4(t).
    \end{align*}
    Combining this with \eqref{Ito to zeta} and \eqref{die Narbe = scar}
    we get 
     \begin{align*} 
 \phi_k(\zeta_t(x)) \leq \phi_k(\zeta_0(x)) 
     + \sum_{j=1}^5 \mathcal{R}_j(t) + \mathcal{M}(t).
 \end{align*}
  
From here the proof goes in exactly the same path
as the proof of Theorem \ref{comparison_ic}
follows from \eqref{decomp_monotonicity}. The fact that here we have an inequality instead of an equality in \eqref{decomp_monotonicity} does not make a difference.
 \end{proof}

\section{Construction of a weak solution}

Firstly we study the case where $V$ is finite. In such a case $\X = \R_+^{|V|}$ and we take $v(x) = 1$ so that $\|\eta\| = \sum_{k = 1}^{|V|}|\eta_k|$ corresponds to the $1$-norm on $\R^{|V|}$. In this case, \eqref{SDE} becomes a classical SDE for which we may use existing results on the existence of weak solutions. The precise statement is summarized in the next lemma. 
\begin{Lemma}\label{lemma: finite E}
 Suppose that $V$ is a finite set and that conditions (A1) -- (A6) are satisfied for $v(x) = 1$. Then for each $\eta_0$ being $\F_0$-measurable with $\E[\|\eta_0\|]<\infty$, \eqref{SDE} has a unique strong solution in $\X = \R_+^{|V|}$.
\end{Lemma}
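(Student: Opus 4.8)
The plan is to reduce the statement to weak existence and then invoke pathwise uniqueness. Conditions (A1)--(A6) in particular imply the weaker conditions (A1'), (A2)--(A4) and (A5'), so Theorem~\ref{thm: uniqueness} already provides pathwise uniqueness of \eqref{SDE}. By the Yamada--Watanabe--Engelbert theorem (\cite[Theorem~3.14]{MR2336594}) it therefore suffices to construct \emph{one} weak solution $(\eta_t)_{t\geq 0}$ with values in $\X=\R_+^{|V|}$: pathwise uniqueness then upgrades it to a unique strong solution, and the moment bound \eqref{unverkennbar = unmistakable} of Theorem~\ref{first_moment_estimate} guarantees it stays in $\X$.

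For the weak existence I would argue by truncation. For $n\in\N$ put $\eta^{(n)}(x):=\eta(x)\wedge n$ and set $B_1^{(n)}(x,t)=B_1(x,t\wedge n)$, $c^{(n)}(x,t)=c(x,t\wedge n)$, $g^{(n)}(x,t)=g(x,t\wedge n)$, $B_0^{(n)}(x,\eta)=B_0(x,\eta^{(n)})$, $\rho^{(n)}(x,\eta,\nu)=\rho(x,\eta^{(n)},\nu)$. Since $t\mapsto t\wedge n$ is $1$-Lipschitz and non-decreasing and $\eta\mapsto\eta^{(n)}$ preserves the partial order, the truncated tuple is bounded and continuous, retains all the monotonicity of (A1), (A3), (A5), and satisfies (A1)--(A4), (A6) with the \emph{same} constants $C_2,C_3,C_4,C_6$ (only the Lipschitz constant of $B_0^{(n)}$ becomes an $n$-dependent but finite number), and is dominated pointwise by the original tuple. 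For these bounded continuous coefficients \eqref{SDE} is a finite-dimensional stochastic equation driven by the $|V|$ Brownian motions and by Poisson random measures whose large-jump part has finite intensity (by (A4)) and whose jump kernel has finite first moment (by (A6)); weak existence of a c\`adl\`ag solution $(\eta^n_t)_{t\geq 0}$ is then classical --- in the case $|V|=1$ it reduces to \cite{FL10}, in the affine case to \cite{BLP15}, and the same tightness/martingale-problem argument applies for general finite $V$ and general coefficients. The solution remains in the cone $\R_+^{|V|}$ because $c(x,0)=g(x,0)=B_1(x,0)=0$, $B_0^{(n)},\rho^{(n)}\geq 0$, and all jumps are non-negative, so no coordinate can cross $0$ downwards.

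It remains to let $n\to\infty$. As the truncated coefficients obey (A1)--(A4), (A6) with constants independent of $n$, the moment bound \eqref{unverkennbar = unmistakable} together with the supremum estimate used in the proof of Theorem~\ref{thm: conservative} gives $\sup_n\E[\sup_{s\leq t}\|\eta^n_s\|]<\infty$ for every $t$; combined with a bound on the jump activity (again uniform in $n$ by (A4), (A6)) this yields, via Aldous' criterion, tightness of $\{\eta^n\}_n$ in the Skorokhod space $D([0,\infty),\R_+^{|V|})$, jointly with the driving noises. Passing to a subsequential limit through Skorokhod's representation theorem and identifying it with the help of the martingale problem associated with \eqref{SDE} produces a weak solution of the original equation, which is non-explosive by Theorem~\ref{thm: conservative}. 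The step I expect to be most delicate is precisely this identification of the limit: one has to show that the limiting process still solves \eqref{SDE} driven by honest Poisson random measures with the prescribed compensators, which is where the martingale formulation and the uniform control of the large-jump intensity from (A4) enter. By contrast, the superlinear competition drift $B_1$, the square-root diffusion coefficients, and the invariance of $\R_+^{|V|}$ are not obstacles, since uniqueness is supplied by Theorem~\ref{thm: uniqueness} and cone-invariance follows from the vanishing of $c,g,B_1$ at $0$.
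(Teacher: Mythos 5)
Your overall strategy coincides with the paper's: reduce to weak existence via pathwise uniqueness (Theorem~\ref{thm: uniqueness}) and Yamada--Watanabe--Engelbert, approximate by a truncated equation, prove non-negativity, and pass to the limit via Aldous tightness plus martingale-problem identification. The truncation is genuinely different, though: you cap the state value (replacing $\eta$ by $\eta\wedge n$ inside all coefficients), while the paper instead caps the jump size, restricting the Poisson integrals to $\{\|\nu\|\leq n\}$, which immediately makes the jump measures finite on the integration domains and lets it invoke the cited existence result \cite{MR4350978} directly. Your claim that (A4) makes the large-jump part have ``finite intensity'' is imprecise --- (A4) bounds first moments $\int_{\{\|\nu\|>1\}}\nu(x)H_1(x,d\nu)$, not the total mass $H_1(x,\{\|\nu\|>1\})$ --- so the invocation of ``classical'' existence for your truncated system with a $\sigma$-finite jump kernel is not as automatic as you suggest.

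The more substantive gap is the non-negativity/setup issue. Your truncated coefficients $c^{(n)}(x,t)=c(x,t\wedge n)$, $g^{(n)}(x,t)=g(x,t\wedge n)$ are still only defined for $t\geq 0$, so $\sqrt{2c^{(n)}(x,\eta(x))}$ is undefined for $\eta(x)<0$ and the SDE is not a classical equation on $\R^{|V|}$ to which off-the-shelf existence theorems apply. The paper handles this by first extending all coefficients to $\R^{|V|}$ via $\eta^+=\max\{0,\eta\}$ (writing $c(x,\eta_s^+(x))$, $g(x,\eta_{s-}^+(x))$, etc.), constructing a solution on all of $\R^{|V|}$, and only then proving a posteriori that the solution stays non-negative --- and that last step is not just the one-line ``no coordinate can cross $0$ downwards'' you assert, but a stopping-time contradiction argument (define $\tau=\inf\{t:\eta_t(x)\leq-\varepsilon\}$, then $\sigma$ the last time before $\tau$ at which the trajectory was at $0$; on $\{\sigma\leq r<\tau\}$ the path is non-decreasing because $c,g$ vanish and the remaining jump and drift contributions are non-negative, contradicting $\eta_\tau(x)\leq -\varepsilon$). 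Your intuition is correct but the rigorous argument, which the paper attributes to \cite[Section~2]{FL10}, is missing, and without the $\eta^+$ device it is not even clear how your truncated SDE is posed in the first place.
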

\begin{proof}
 It follows from \cite{MR4350978} that for each $n \geq 1$ the equation
 \begin{align}\label{SDE finite E}
    \eta_t(x) &= \eta_0(x) + \int_0^t B(x,\eta^+_s)ds 
    + \int_0^t \sqrt{2c(x,\eta_s^+(x))}dW_s(x)
    \\ \notag &\ \ \ + \int_0^t \int_{\X \backslash \{0\}}\int_{\R_+} \1_{\{\|\nu\| \leq n\}}\nu(x) \mathbbm{1}_{ \{ u \leq g(x,\eta_{s-}^+(x))\} } \widetilde{N}_x(ds,d\nu,du)
    \\ \notag &\ \ \ + \sum_{y \in V \backslash \{x\}}\int_0^t \int_{\X \backslash \{0\}}\int_{\R_+} \1_{\{\|\nu\| \leq n\}}\nu(x) \mathbbm{1}_{ \{ u \leq g(y,\eta_{s-}^+(y))\} } N_y(ds,d\nu,du)
    \\ \notag &\ \ \ + \int_{0}^t \int_{\X \backslash \{0\}} \int_{\R_+}\1_{\{\|\nu\| \leq n\}}\nu(x) \mathbbm{1}_{ \{ u \leq \rho(x,\eta_{s-}^+,\nu)\} } M(ds,d\nu,du)
 \end{align}
 with $\eta^+(y) = \max\{0,\eta(y)\}$ for $y \in V$ has a weak solution on $\X = \R^{|V|}$. Since the coefficients $c^+(x,t) = c(x,t^+)$, $g^+(x,t) = g(x,t^+)$, and $\rho^+(x,t,\nu) = \rho(x,t^+,\nu)$ still satisfy the conditions (A1) -- (A6), in view of Theorem \ref{thm: uniqueness}, also pathwise uniqueness holds and hence this solution is strong. By following the argument given in \cite[Section 2]{FL10}, we prove that this solution is nonnegative.  
 Suppose that there exists $\e > 0$ and $x \in V$ such that  
 $\tau = \inf\{ t \geq 0 \ : \ \eta_t(x) \leq -\e \}$ satisfies $\P[\tau < \infty] > 0$. Then $\eta_{\tau}(x) = \eta_{\tau-}(x) \leq -\e$ holds on $\{\tau < \infty\}$.
 Let $\sigma = \inf\{ s \in (0,\tau) \ : \ \eta_t(x) \leq 0, \ \ \forall t \in [s,\tau]\}$. Then $\sigma < \tau$ a.s., and hence we can find a deterministic time $r \geq 0$ such that $\{\sigma \leq r < \tau \}$ has positive probability. A.s. on this event, we find for $t \geq r$
 \begin{align*}
    \eta_{t \wedge \tau}(x) &= \eta_{r \wedge \tau}(x) + \int_{r \wedge \tau}^{t \wedge \tau} B(x,\eta_s^+)ds 
    \\ \notag &\ \ \ + \sum_{y \in E \backslash \{x\}}\int_{r \wedge \tau}^{t \wedge \tau} \int_{\X \backslash \{0\}}\int_{\R_+} \nu(x) \mathbbm{1}_{ \{ u \leq g(y,\eta_{s-}^+(y))\} } N_y(ds,d\nu,du)
    \\ \notag &\ \ \ + \int_{r \wedge \tau}^{t \wedge \tau} \int_{\X \backslash \{0\}} \int_{\R_+}\nu(x) \mathbbm{1}_{ \{ u \leq \rho(x,\eta_{s-}^+,\nu)\} } M(ds,d\nu,du).
 \end{align*}
 In view of condition (A1) we have $B(x,\eta) \geq 0$ whenever $\eta \in \X$ is such that $\eta(x) = 0$. Thus $t \longmapsto \eta_{t \wedge \tau}(x)$ is non-decreasing. Since $\eta_{r}(x) > -\e$ on $\{r < \tau\}$, we get a contradiction to $\eta_{\tau}(x) = \eta_{\tau-}(x) \leq -\e$. Hence the solution is nonnegative.
 
 It remains to show that we can pass to the limit $n \to \infty$. This procedure is rather standard, so we only provide a sketch of the proof. Let $(\eta_t^n)_{n \geq 1}$ be the unique strong solution of \eqref{SDE finite E}. It is not difficult to show that
 \[
  \sup_{n \geq 1}\E\left[ \sup_{t \in [0,T]} |\eta_t^{(n)}(x)| \right] < \infty, \qquad \forall x \in V.
 \]
 Hence using the Aldous criterion, we find that the sequence of processes $(\eta_t^n)_{n \geq 1}$ is tight on the Skorohod space. Using convergence of the martingale problems, we may show that any of its limits is a weak solution of \eqref{SDE} (with $|V| < \infty$). This completes the proof.
\end{proof}

In the second step,  we use Lemma \ref{lemma: finite E} to approximate a weak solution via $V_N \nearrow V$ where $V_N$ is an increasing sequence of finite sets.

\begin{Theorem}\label{thm weak existence}
 Suppose that conditions (A1) -- (A6) are satisfied. Then weak existence holds for \eqref{SDE} and any $\F_0$-measurable initial condition $\eta_0 \in \X$ satisfying $\E[\|\eta_0\|] < \infty$.
\end{Theorem}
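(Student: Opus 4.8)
The plan is to realize a weak solution of \eqref{SDE} as the monotone limit of strong solutions of finite subsystems. Fix an increasing exhaustion $V = \bigcup_{N\geq 1} V_N$ by finite sets. For each $N$, let $\eta^N$ be the strong solution, on the finite state space $\X_{V_N}$ and driven by the restrictions of the given noises, of the system obtained from \eqref{SDE} by switching off the coefficients $B$, $g$ and $\rho$ on $V \setminus V_N$ --- that is, of \eqref{SDE} in the situation of Theorem \ref{thm special comparison} with $V' = V_N$ --- with initial datum the restriction $\eta_0|_{V_N}$. Existence and uniqueness of $\eta^N$ come from Lemma \ref{lemma: finite E}: the restricted coefficients satisfy (A1)--(A6) on $\X_{V_N}$ with the weight $v|_{V_N}$ and with constants bounded by the original $C_1, \dots, C_6$, since every sum in (A1)--(A6) becomes a sub-sum over $V_N$. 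Denote by $\bar\eta^N \in \X$ the extension of $\eta^N$ by zero.

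Next I would show that $(\bar\eta^N)_N$ is non-decreasing in $N$ and has uniformly bounded first moments. Monotonicity on $V \setminus V_N$ is trivial, while on $V_N$ it follows from the comparison principle: inside the finite ambient set $V_{N+1}$, $\bar\eta^N$ is a solution of the system with $B,g,\rho$ switched off outside $V_N$ and with initial configuration supported in $V_N$, so Theorem \ref{thm special comparison} --- together with the comparison principles Theorem \ref{comparison_ic} and Corollary \ref{thm comparison different drift} to reconcile the two subsystems, which live on different sets of active sites --- yields $\bar\eta^N_t \leq \eta^{N+1}_t = \bar\eta^{N+1}_t$ for all $t \geq 0$, a.s. The uniform moment bound is exactly estimate \eqref{bieder = tame} of Theorem \ref{first_moment_estimate}, since the restricted coefficients are dominated by the original ones in the required monotone sense: $\E[\|\bar\eta^N_t\|] \leq (1 + \E[\|\eta_0\|]) e^{Ct}$ with $C$ independent of $N$. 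Consequently the limit $\eta_t(x) := \lim_{N\to\infty} \bar\eta^N_t(x)$ exists a.s.\ for every $x \in V$ and $t \geq 0$; by Fatou $\E[\|\eta_t\|] \leq (1 + \E[\|\eta_0\|]) e^{Ct} < \infty$, so $\eta_t \in \X$ a.s., and by monotone/dominated convergence $\E\big[\int_0^T \|\bar\eta^N_s - \eta_s\|\, ds\big] \to 0$ for each $T > 0$.

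It then remains to pass to the limit $N \to \infty$ in \eqref{SDE} for the process $(\eta_t)$ and to select a cadlag modification; the latter is automatic once \eqref{SDE} is verified on a countable dense set of times, its right-hand side being cadlag. The drift term passes to the limit by continuity of $B_1(x,\cdot)$ and the local Lipschitz bound on $B_0$ from (A1); the Brownian term by the Lipschitz bound on $c(x,\cdot)$ from (A2) and $L^1$-convergence of It\^{o} integrals, after localizing by the stopping times $\tau_m$ of Lemma \ref{lemma: localization}. The delicate --- and main --- obstacle is the passage to the limit in the three Poisson integrals, where continuity alone is not enough: one uses that $g(y,\cdot)$ and $\rho(x,\cdot,\nu)$ are non-decreasing and Lipschitz by (A3) and (A5), so that, for Lebesgue-a.e.\ $u$ and $s$, the rate indicators $\1_{\{u \leq g(y,\bar\eta^N_{s-}(y))\}}$ and $\1_{\{u \leq \rho(x,\bar\eta^N_{s-},\nu)\}}$ increase to $\1_{\{u \leq g(y,\eta_{s-}(y))\}}$ and $\1_{\{u \leq \rho(x,\eta_{s-},\nu)\}}$ as $N \to \infty$. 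Monotone convergence handles the compensators; combining this with the uniform first-moment bound and a Burkholder--Davis--Gundy estimate (again after localization by $\tau_m$) yields convergence of the compensated jump integrals, identifying $(\eta_t)$ as a weak solution of \eqref{SDE} with the prescribed noises and initial condition $\eta_0$. The two points requiring the most care are precisely this limit in the Poisson terms --- since it rests on the monotonicity of $g$ and $\rho$ rather than on any uniform continuity, it has to be coupled with the moment estimates and the localization --- and, more routinely, the bookkeeping that makes the finite subsystems fit together monotonically with $N$-uniform constants.
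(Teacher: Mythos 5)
Your proposal follows the same strategy as the paper's own proof: construct finite approximations by switching the coefficients $B_0$, $g$, $\rho$ off outside an exhausting sequence $V_N$, solve these via Lemma~\ref{lemma: finite E}, use the comparison principle of Theorem~\ref{thm special comparison} to establish that the approximating solutions increase in $N$, invoke Theorem~\ref{first_moment_estimate} for an $N$-uniform first-moment bound, and pass to the limit term by term in \eqref{SDE}. The main differences are cosmetic: for the jump terms the paper works directly with the It\^o isometry (small jumps) and an $L^1$ compensator bound (large jumps), using the Lipschitz hypotheses (A3) and (A5) to control the $du$-integral of the difference of indicators and a deterministic truncation level $R$ instead of your proposed localization by $\tau_m$ with BDG, and for the drift the paper explicitly appeals to monotonicity of $B_0$, $B_1$ from (A1) together with monotone convergence rather than the local Lipschitz constant $C_1(R)$. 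One small infelicity worth noting: your invocation of Theorem~\ref{comparison_ic} and Corollary~\ref{thm comparison different drift} "to reconcile the two subsystems" is unnecessary --- viewing $\bar\eta^N$ as the process on the finite ambient set $V_{N+1}$ with coefficients cut off outside $V_N$ is a direct consequence of pathwise uniqueness, after which a single application of Theorem~\ref{thm special comparison} on $V_{N+1}$ gives the monotonicity in $N$.
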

\begin{proof}
 \textit{Step 1.} Fix any stochastic basis $(\Omega, \F, (\F_t)_{t \geq 0}, \P)$, let noise terms given as in (N1) -- (N4), and let $\eta_0$ be an $\F_0$-measurable random variable with $\E[\|\eta_0\|] < \infty$. Let $(V_N)_{N\in \N}$ be a sequence of finite sets in $V$ such that $V_N \nearrow V$. Define $B^N(x,\eta) = \1_{V_N}(x)B_0(x,\eta) - B_1(x,\eta(x))$, $g^N(x,t) = \1_{V_N}(x)g(x,t)$, and $\rho^N(x,\eta,\nu) = \1_{V_N}(x)\rho(x,\eta,\nu)$. Then conditions (A1) -- (A6) are still satisfied with $V$ replaced by $V_N$, and \eqref{SDE} takes for these restricted coefficients the form
  \begin{align*}
    \eta_t^N(x) &= \eta^N_0(x) + \int_0^t B^N(x,\eta^N_s)ds 
    + \int_0^t \sqrt{2c(x,\eta^N_s(x))}dW_s(x)
    \\ \notag &\ \ \ + \int_0^t \int_{\X \backslash \{0\}}\int_{\R_+} \nu(x) \mathbbm{1}_{ \{ u \leq g^N(x,\eta^N_{s-}(x))\} } \widetilde{N}_x(ds,d\nu,du)
    \\ \notag &\ \ \ + \sum_{y \in V \backslash \{x\}}\int_0^t \int_{\X \backslash \{0\}}\int_{\R_+} \nu(x) \mathbbm{1}_{ \{ u \leq g^N(y,\eta^N_{s-}(y))\} } N_y(ds,d\nu,du)
    \\ \notag &\ \ \ + \int_{0}^t \int_{\X \backslash \{0\}} \int_{\R_+} \nu(x) \mathbbm{1}_{ \{ u \leq \rho^N(x,\eta^N_{s-},\nu)\} } M(ds,d\nu,du)
\end{align*}
where $\eta_0^N$ is defined by $\eta_0^N(x) = \1_{V_N}(x)\eta_0(x)$. Thus the equation is effectively an equation for $\eta_t^N(x)$ with $x \in V_N$ which has a unique strong solution due to Lemma \ref{lemma: finite E}.
 
 \textit{Step 2.} Using $B^N \leq B^{N+1}$ and $\rho^N \leq \rho^{N+1}$ and the comparison principle in Theorem \ref{thm special comparison}, we find that $\P[ \eta_t^N \leq \eta_t^{N+1}, \ \ t \geq 0] = 1$ for $N \geq 1$. Since $B^N\leq B$ and $\rho^N\leq\rho$ for all $N\in\N$, we may apply Theorem \ref{first_moment_estimate} with $C$ independent of $N$ to show that 
 \[
  \sup_{t \in [0,T]}\sup_{N \geq 1}\E[ \| \eta_t^N\| ] < \infty, \qquad T > 0.
 \]
 Define a new process $(\eta_t)_{t \geq 0}$ by $\eta_t(x) = \sup_{N \geq 1} \eta_t^N(x)$ for $x \in V$. Then $(\eta_t)_{t \geq 0}$ is $(\F_t)_{t \geq 0}$-adapted and by monotone convergence, we see that
 \begin{equation*}
  \sup_{t \in [0,T]}\E[ \|\eta_t\| ] \leq \sup_{t \in [0,T]}\sup_{N \geq 1}\E[ \| \eta_t^N\| ] < \infty,
 \end{equation*}
 i.e., $(\eta_t)_{t \geq 0}$ takes values in $\X$. Note that by monotone convergence, we also have
 \begin{equation}\label{dominated_convergence}
  \lim_{N \to \infty}\int_0^T \E\left[\|\eta_t-\eta_t^N\| \right] dt = 0.
 \end{equation}
 
 \textit{Step 3.} The arguments in Step 2 already infer that the process $(\eta_t)_{t\geq 0}$ is $\X$-valued. Therefore, it remains to show that $(\eta_t)_{t \geq 0}$ is a solution to \eqref{SDE}. We consider all terms of \eqref{SDE} separately.  Convergence of the initial conditions, i.e. 
 $\lim_{N\to\infty}\eta_0^N(x)=\eta_0(x)$ is clear. For the drift we obtain
 \begin{align*}
     &\ \E\left[\left| \int_0^t\mathbbm{1}_{V_N}(x)B(x,\eta_s^N)ds - \int_0^t B(x,\eta_s)ds \right| \right]
     \\ &\leq \int_0^t \E\left[ |B_0(x,\eta_s^N) - B_0(x,\eta_s)|\right] ds 
     + \int_0^t \E \left[ |B_1(x,\eta_s^N(x)) - B_1(x,\eta_s(x))| \right] ds
     \\ &\ \ \ + \1_{V_N^c}(x) \int_0^t \E\left[ |B(x,\eta_s)|\right] ds.
 \end{align*}
 The first two terms converge to zero as $N \to \infty$ by monotone convergence being applicable due to condition (A1). The last term is finite due to the boundedness of the first moment and hence converges to zero for fixed $x \in V$. For the continuous noise part, we obtain
 \begin{align*}
  &\E \left[\left| \int_0^t \sqrt{2 c(x,\eta_s(x))} dW_s(x) - \int_0^t \sqrt{2 c(x,\eta_s^N(x))} dW_s(x)\right|^2 \right]
  \\ &= \int_0^t \E\left[ \left(\sqrt{2c(x,\eta_s(x))} - \sqrt{2c(x,\eta_s^N(x))} \right)^2 \right]ds
  \\ &\leq 2 \int_0^t \E\left[| c(x,\eta_s(x)) - \1_{V_N}(x)c(x,\eta_s^N(x))|\right] ds
   \\ &\leq 2 \int_0^t \E\left[ 1_{\{\eta_s(x) \leq R\}} |c(x,\eta_s(x)) - c(x,\eta_s^N(x))| \right] ds
     \\ &\ \ \ + 2 \int_0^t \E\left[ \1_{\{\eta_s(x) > R\}}|c(x,\eta_s(x)) - c(x,\eta_s^N(x))| \right] ds
     \\ &\leq 2 C_2(x)  \E\left[ |\eta_s(x) - \eta_s^N(x)| \right] ds
     + 4C_2(x)  \int_0^t\E\left[ 1_{\{\eta_s(x) > R\}}\eta_s(x)\right]ds
 \end{align*}
 where we have used $\eta_s^N(x) \leq \eta_s(x)$.
 Thus, taking first for fixed $R$ the limit $N \to \infty$, and then letting $R \to \infty$, proves the convergence to zero. 

 For the stochastic integrals against $\widetilde{N}_x$, we split the integrals into $\{\|\nu\|\leq 1\}\setminus\{0\}$ and $\{\|\nu\| > 1\}$ and study them separately. First, note that using It\^o's isometry (e.g. \cite[p. 63]{ikeda_watanabe}),
 \begin{align*}
     &\E\Big[\Big(\int_0^t\int_{\{\|\nu\|\leq 1\}\setminus\{0\}}\int_{\R_+}\nu(x)\mathbbm{1}_{\{u\leq g(x,\eta_{s-}(x))\}}\tilde{N}_x(ds,d\nu,du)
     \\
     &\hspace{50pt}-\int_0^t\int_{\{\|\nu\|\leq 1\}\setminus\{0\}}\int_{\R_+}\nu(x)\mathbbm{1}_{\{u\leq g(x,\eta^N_{s-}(x))\mathbbm{1}_{V_N}(x)\}}\tilde{N}_x(ds,d\nu,du)\Big)^2\Big]
     \\
     &=\int_0^t\int_{\{\|\nu\|\leq 1\}\setminus\{0\}}\nu(x)^2\E\Big[\int_{\R_+}|\mathbbm{1}_{\{u\leq g(x,\eta_{s-}(x))\}}-\mathbbm{1}_{\{u\leq g(x,\eta^N_{s-}(x))\}}\mathbbm{1}_{V_N}(x)|du\Big]H_1(x,d\nu)ds
     \\
     &\leq\mathbbm{1}_{V_N}(x)\int_0^t\int_{\{\|\nu\|_V\leq 1\}\setminus\{0\}}\nu(x)^2\E|g(x,\eta_{s-}(x))-g(x,\eta^N_{s-}(x))|H_1(x,d\nu)ds
     \\
     &\hspace{50pt}+\mathbbm{1}_{E_N^c}(x)\int_0^t\int_{\{\|\nu\|\leq 1\}\setminus\{0\}}\nu(x)^2\E[g(x,\eta_{s-}(x)]H_1(x,d\nu)ds
     \\
     &\leq C_3(x)\int_0^t\int_{\{\|\nu\|\leq 1\}\setminus\{0\}}\nu(x)^2\E|\eta_{s-}(x)-\eta^N_{s-}(x)|H_1(x,d\nu)
     \\
     &\hspace{50pt}+\mathbbm{1}_{V_N^c}(x)C_3(x)\int_0^t\int_{\{\|\nu\|\leq 1\}\setminus\{0\}}\nu(x)^2\E(\eta_{s-}(x))H_1(x,d\nu)ds
 \end{align*}
 where we used (A3) in the end. The first term tends to zero as $N\to \infty$ due to \eqref{dominated_convergence}, while the second one due to the indicator function.
 For the integrals against $\{\|\nu\| > 1\}$ we find that 
 \begin{align*}
     &\frac{1}{2}\E\Big[\Big|\int_0^t\int_{\{\|\nu\|>1\}}\int_{\R_+}\nu(x)\mathbbm{1}_{\{u\leq g(x,\eta_s(x))\}}\tilde{N}_x(ds,d\nu,du)
     \\ &\hspace{50pt}-\int_0^t\int_{\{\|\nu\|>1\}}\int_{\R_+}\nu(x)\mathbbm{1}_{\{u\leq g(x,\eta^N_s(x))\}}\mathbbm{1}_{V_N}(x)\tilde{N}_x(ds,d\nu,du)\Big|\Big]
     \\ &\leq \int_0^t\int_{\{\|\nu\|>1\}}\nu(x)\E\Big[\int_{\R_+}\big|\mathbbm{1}_{\{u\leq g(x,\eta_s(x))\}}-\mathbbm{1}_{\{u\leq g(x,\eta^N_s(x))\}}\mathbbm{1}_{V_N}(x)\big|du\Big]H_1(x,d\nu)ds
    \\ &\leq \mathbbm{1}_{V_N^c}(x)\int_0^t\int_{\{\|\nu\|>1\}}\nu(x)\E\big[g(x,\eta_s(x))\big]H_1(x,d\nu)ds
  \\ &\hspace{10pt}+\mathbbm{1}_{V_N}(x)\int_0^t\int_{\{\|\nu\|>1\}}\nu(x)\E\big[\big|g(x,\eta_s(x))-g(x,\eta_s^N(x))\big|\big]H_1(x,d\nu)ds
  \\ &=\mathbbm{1}_{V_N^c}(x)C_3(x)\int_{\{\|\nu\|>1\}}\nu(x)H_1(x,d\nu)\int_0^t\E[\eta_s(x)]ds
  \\
  &\hspace{10pt}+ C_3(x)\int_{\{\|\nu\|>1\}}\nu(x)H_1(x,d\nu)\int_0^t\E\big|\eta_s(x)-\eta_s^N(x)\big|ds
 \end{align*}
 Also here the right-hand side tends to zero as $N \to \infty$.
 
 For the integrals against $N_y$ we obtain
 \begin{align*}
     \E&\Big[\Big|\sum_{y\in E\setminus\{x\}}\int_0^t\int_{\X\setminus\{0\}}\int_{\R_+}\nu(x)\mathbbm{1}_{\{u\leq g(y,\eta_{s-}(y))\}}-\nu(x)\mathbbm{1}_{\{u\leq g(y,\eta_{s-}^N(y))\}}\mathbbm{1}_{V_N}(x)N_y(ds,d\nu,du)\Big|\Big]
     \\ &\leq\mathbbm{1}_{V_N^c}(x)\sum_{y\in V\setminus\{x\}}C_3(y)\int_{\X \setminus\{0\}}\nu(x)H_1(y,d\nu)\int_0^t\E[\eta_s(y)]ds
     \\ &\hspace{10pt} + \mathbbm{1}_{V_N}(x)\sum_{y\in V\setminus\{0\}}C_3(y)\int_{\X\setminus\{0\}}\nu(x)H_1(y,d\nu)\int_0^t\E\big[\big|\eta_s(y)-\eta_s^N(y)\big|\big]ds
 \end{align*}
 We estimate both terms separately using (A4). The first one is bounded by
 \begin{align*}
     \mathbbm{1}_{V_N^c}(x)\sum_{y\in E\setminus\{x\}}&C_3(y)\int_{\X \setminus\{0\}}\nu(x)H_1(y,d\nu)\int_0^t\E[\eta_s(y)]ds
     \\ &\leq \mathbbm{1}_{V_N^c}(x)\frac{C_4}{v(x)}\sum_{y\in V \backslash \{x\}}v(y)\int_0^t\E[\eta_s(y)]ds
     \\ &= \mathbbm{1}_{V_N^c}(x)\frac{C_4}{v(x)}\int_0^t\E[\|\eta_s\|]ds<\infty.
 \end{align*}
 Since the last expression is finite, it tends to zero as $N \to \infty$.
 The second term tends to zero as $N \to \infty$ due to \eqref{dominated_convergence} and
 \begin{align*}
     \sum_{y\in V\setminus\{x\}}&C_3(y)\int_{\X \setminus\{0\}}\nu(x)H_1(y,d\nu)\int_0^t\E\big[|\eta_s(y)-\eta_s^N(y)|\big]ds
    \leq\frac{C_4}{v(x)}\int_0^t\E\big[\|\eta_s-\eta_s^N\|]ds.
 \end{align*}
 Finally, for the last integral, we find that 
 \begin{align*}
     \E&\Big[\Big|\int_0^t\int_{\X\setminus\{0\}}\int_{\R_+}\nu(x)\mathbbm{1}_{\{u\leq\rho(x,\eta_{s-},\nu)\}}-\nu(x)\mathbbm{1}_{\{u\leq\rho(x,\eta_{s-}^N,\nu)\}}\mathbbm{1}_{V_N}(x)M(ds,d\nu,du)\Big|\Big]
     \\ &\leq\mathbbm{1}_{V_N^c}(x)\E\Big[\int_0^t\int_{\X\setminus\{0\}}\nu(x)\rho(x,\eta_{s-},\nu)dsH_2(d\nu)\Big]
     \\ &\hspace{10pt}+\mathbbm{1}_{V_N}(x)\E\Big[\int_0^t\int_{\X\setminus\{0\}}\nu(x)|\rho(x,\eta_{s-},\nu)-\rho(x,\eta_{s-}^N,\nu)|ds H_2(d\nu) \Big].
 \end{align*}
 By (A6), the first term is finite (and hence convergent) due to 
 \begin{align*}
     \E\Big[\int_0^t\int_{\X\setminus\{0\}}\nu(x)\rho(x,\eta_{s-},\nu)dsH_2(d\nu)\Big]\leq\frac{C_6}{v(x)}\int_0^t (1+\E\|\eta_s\|)ds<\infty.
 \end{align*}
 For the second term, fix $R>0$. Using (A5) and (A6), we get
 \begin{align*}
     \E&\Big[\int_0^t\int_{\X\setminus\{0\}}\nu(x)|\rho(x,\eta_{s-},\nu)-\rho(x,\eta_{s-}^N,\nu)|dsH_2(d\nu)\Big]
     \\ &\leq\int_0^t\int_{\X\setminus\{0\}}\nu(x)\E\big[\mathbbm{1}_{\{\|\eta_{s-}\|\leq R\}}|\rho(x,\eta_{s-},\nu))-\rho(x,\eta_{s-}^N,\nu)|H_2(d\nu)ds
     \\ &\hspace{10pt}+\int_0^t\E\Big[\int_{\X\setminus\{0\}}\nu(x)\mathbbm{1}_{\{\|\eta_{s-}\|>R\}}|\rho(x,\eta_{s-},\nu)-\rho(x,\eta_{s-}^N,\nu)|H_2(d\nu)\Big]ds
     \\ &\leq\frac{C_5(R)}{v(x)}\int_0^t\E\|\eta_{s-}-\eta_{s-}^N\|_Vds+\int_0^t\E\Big[\int_{\X\setminus\{0\}}\nu(x)\mathbbm{1}_{\{\|\eta_s\|>R\}}\rho(x,\eta_{s-},\nu)H_2(d\nu)\Big]ds
     \\ &\leq\frac{C_5(R)}{v(x)}\int_0^t\E\|\eta_{s-}-\eta_{s-}^N\|ds+\int_0^t\frac{C_6}{v(x)}\E\big[\mathbbm{1}_{\{\|\eta_s\|>R\}}(1+\|\eta_{s-}\|)\big]ds.
 \end{align*}
 As in the calculation for the Brownian part, this estimate implies convergence to zero when $N \to \infty$.
\end{proof}

\section{Proof of Theorem \ref{thm: invariant measures}}

In this section, we prove the existence of an invariant measure, and convergence in the Wasserstein distance towards this measure, i.e., we prove Theorem \ref{thm: invariant measures}.
\begin{proof}[Proof of Theorem \ref{thm: invariant measures}]
 Let $(\eta_t)_{t \geq 0}$ and $(\xi_t)_{t \geq 0}$ be the unique solutions to \eqref{SDE} with deterministic initial conditions $\eta_0,\xi_0 \in \X$ such that $\xi_0 \leq \eta_0$. Then
 $\xi_t \leq \eta_t$ a.s., and hence
 \begin{align*}
     \E[\|\eta_t - \xi_t\|] &= \sum_{x \in V}v(x)\E[(\eta_t(x) - \xi_t(x))]
     \\ &= \sum_{x \in V}v(x) \left( \E[\eta_t(x)] - \E[\xi_t(x)]\right)
     \\ &= \sum_{x \in V}v(x) (\eta_0(x) - \xi_0(x)) 
     + \sum_{x \in V}v(x)\int_0^t \E\left[\widetilde{B}(x,\eta_s(x)) - \widetilde{B}(x,\xi_s(x)\right]ds  
     \\ &\leq \sum_{x \in V}v(x) (\eta_0(x) - \xi_0(x))  - A \int_0^t \left( \sum_{x \in V}v(x)\E[(\eta_s(x) - \xi_s(x))] \right) ds
     \\ &= \| \eta_0 - \xi_0 \| - A \int_0^t \E[ \|\eta_s - \xi_s\|] ds.
 \end{align*}
 The Gronwall lemma yields $\E[\|\eta_t - \xi_t\|] \leq \E[\|\eta_0 - \xi_0\|]e^{-At}$.
 For general deterministic $\xi_0,\eta_0 \in V$ we let $V = \{ x_k \ : \ k \geq 1 \}$ be a numeration of $V$, and define
 \[
  \xi_0^n(x) = \begin{cases} \eta_0(x_k), & k = 1, \dotsc, n
  \\ \xi_0(x_k), & k > n
  \end{cases}
 \]
 with $\xi_0^0 = \xi_0$. Then
 \[
  \xi_0^{n+1}(x_k) - \xi_0^n(x_k) = \begin{cases}
  0, & k \neq n + 1
  \\ \eta_0(x_{n+1}) - \xi_{0}(x_{n+1}), & k = n+1
  \end{cases}
 \]
 and hence
 for each $n \in \N$ either
 $\xi_0^n \leq \xi_0^{n + 1}$ or $\xi_0^{n+1} \leq \xi_0^n$. Let $(\xi_t^n)_{t \geq 0}$ be the unique solution of \eqref{SDE} with initial condition $\xi^n_0$. Previous consideration yields
 \begin{align*}
  \E[\|\xi_t^n - \xi^{n+1}_t\|] &\leq \E[\|\xi_0^{n} - \xi^{n+1}_0\|]e^{-At}
  \\ &= v(x_{n+1})|\eta_0(x_{n+1}) - \xi_0(x_{n+1})| e^{-At}.
 \end{align*}
 Hence we obtain
 \begin{align*}
     \E[ \|\eta_t - \xi_t|] 
     &\leq \sum_{k=0}^{n-1} \E[\|\xi_t^k - \xi_t^{k+1}\|]
     + \E[\|\xi_t^{n} - \eta_t \|]
     \\ &\leq  e^{-At}\sum_{k=0}^{n-1} v(x_{k+1})|\eta_0(x_{k+1}) - \xi_0(x_{k+1})|+ \E[\|\xi_t^{n} - \eta_t \|]
     \\ &\leq e^{-At}\| \eta_0 - \xi_0\| + \E[\|\xi_t^{n} - \eta_t \|].
 \end{align*}
 Since the constants $\sup_{R > 0}(C_1(R) + C_5(R)) < \infty$, Theorem \ref{thm: uniqueness} implies that
 \[
  \E[ \|\xi_t^n - \eta_t\| ] \leq \|\xi_0^n - \eta_0\| e^{ct}
  = \sum_{k=n+1}^{\infty}v(x_k)\eta_0(x_k) e^{ct} \longrightarrow 0, \ \ n \to \infty,
 \]
 where the constant $c$ is independent of $n$.
 Hence we obtain
 \[
  \E[ \|\eta_t - \xi_t\|]  \leq e^{-At}\| \eta_0 - \xi_0\|
 \]
 which readily yields \eqref{eq: wasserstein 1 contraction estimate}.
 Since $(\X,d)$ is a Polish space, $(\mathcal{P}_1(\X),W_1)$ is a Polish space as well (see e.g. \cite[Theorem 6.18]{villani}). Therefore the existence and uniqueness of an invariant measure as well as \eqref{eq: wasserstein 1 inv measure} are immediate consequences of \eqref{eq: wasserstein 1 contraction estimate}.
 This completes the proof.
\end{proof}

\section{Linear speed of spread and growth bound}

In this section, we prove Theorem \ref{thm: at most linear}. We consider $V$ to be the vertex set of an infinite connected graph $G = (V,E)$ of bounded degree. Let $\mathrm{dist}(z,z')$ be the graph distance for $z,z' \in V$, and for $x \in V$ and $r >0$ we define
\[
\mathbb{B}(x,r) : = \{z \in V: \text{dist}(z,x) \leq r\}
\]
as the set of nodes in $V$ that are within the graph distance $r$ from $x$. Denote by $d$ the maximum degree of $G$, that is, the maximum  degree of its vertices. Note that $d \geq 2$ since $G$ is connected. For a given $x \in V$ and $k\in \N$ there are at most $d^k$ distinct nodes $y \in V$ satisfying $\text{dist}(x,y) = k$. Hence
\begin{equation}\label{quietschen = sqeak}
    \#\mathbb{B}(x,r) \leq  1+d+...+d^r \leq d^{r+1} , 
\ \ \ r \in \N. 
\end{equation}
The proof of Theorem \ref{thm: at most linear} relies on a heat kernel estimate that is a direct consequence of \cite[Corollary 12]{HeatKernelGraph} (see also \cite{Pang93}) as formulated below.
\begin{Lemma}\label{verdrehen = twist, distort}
 Let $(S_t, t \geq 0)$ be a
 nearest neighbour continuous-time random walk
 on an infinite connected graph $\widetilde G $ of bounded degree 
 with vertex set $\widetilde V$. The jump rate from $u \in  \widetilde V$ to $v  \in \widetilde V$ is given by $ \beta(u,v) >0$
 if $u \sim v$, and $0$ otherwise. Here $u \sim v$ indicates that $u,v$ are neighbours. Assume that $\sup_{u \sim v} \beta(u,v) < \infty$ and there exists $m > 0$ such that
 \[
  \sum\limits_{v:\ v \sim u} \beta(u,v) \geq m > 0, \qquad \forall u \in \widetilde{V}.
 \]
 Let $K(t,u,v) = \P _u \{ S_t = v \}$ be the transition probability starting from $u$ to be at $v$ at time $t$. Then for $u,v \in \widetilde V$ and $t \geq 0$
 \begin{equation}\label{kauen = chew}
   K(t,u,v) \leq 
      \frac {1}{m} \exp \Big[  -\widetilde d(u,v) \ln \Big(  \frac{2\widetilde d(u,v)}{et} \Big) \Big],
 \end{equation}
 where $\widetilde d$ is the graph distance in $\widetilde G$. 
\end{Lemma}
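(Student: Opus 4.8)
The plan is to deduce Lemma~\ref{verdrehen = twist, distort} directly from \cite[Corollary 12]{HeatKernelGraph} (see also \cite{Pang93}), which supplies an off-diagonal upper bound for the transition kernel of a continuous-time random walk on a graph of bounded degree, expressed through the combinatorial graph distance, under a strictly positive lower bound on the total exit rate at each vertex. The argument therefore consists almost entirely of fitting the walk $(S_t)_{t\ge0}$ into the setting of that corollary and then reading off \eqref{kauen = chew}.

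First I would record the two quantitative properties of the jump mechanism that the cited estimate needs. Write $\Lambda(u):=\sum_{v:\,v\sim u}\beta(u,v)$ for the rate at which $(S_t)$ leaves $u$. The standing hypothesis $\sum_{v\sim u}\beta(u,v)\ge m$ is precisely the lower bound required by \cite[Corollary 12]{HeatKernelGraph}, and it is responsible for the prefactor $\tfrac1m$. Conversely, since $\widetilde G$ has bounded degree and $\beta(u,v)\le\sup_{u'\sim v'}\beta(u',v')<\infty$, the exit rates are uniformly bounded, $\sup_{w\in\widetilde V}\Lambda(w)<\infty$; this guarantees that $(S_t)$ is non-explosive, so that $K(t,u,v)=\P_u\{S_t=v\}$ indeed defines a probability kernel, and it is also what allows one to use the combinatorial distance $\widetilde d$ on $\widetilde G$ as the length scale in the estimate. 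Feeding these two facts into \cite[Corollary 12]{HeatKernelGraph} yields exactly
\[
 K(t,u,v)\;\le\;\frac1m\,\exp\!\Big[-\widetilde d(u,v)\,\ln\Big(\tfrac{2\,\widetilde d(u,v)}{et}\Big)\Big],\qquad u,v\in\widetilde V,\ t\ge0,
\]
which is \eqref{kauen = chew}. If the statement in \cite{HeatKernelGraph} is phrased with a slightly different normalization — say a heat kernel relative to a reference measure, or the normalized generator — the only additional step is the routine reduction to the counting measure on $\widetilde V$, under which the heat-kernel density coincides with $K(t,u,v)$, together with absorbing $m\le\Lambda(u)\le\sup_{w\in\widetilde V}\Lambda(w)$ into the constants.

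The main obstacle is not conceptual but one of matching conventions: one has to check that the distance appearing in the conclusion of \cite[Corollary 12]{HeatKernelGraph} is, or dominates, the combinatorial graph distance $\widetilde d$, and that the numerical constants — the factor $2$ inside the logarithm and the prefactor $\tfrac1m$ — come out exactly as stated rather than with some larger universal constant. Heuristically the bound reflects the fact that, in order to have $S_t=v$, the walk must perform at least $\widetilde d(u,v)$ jumps by time $t$, and the number of jumps up to time $t$ is stochastically dominated by a Poisson variable with parameter $t\sup_{w\in\widetilde V}\Lambda(w)$, whose upper tail is of the type $\exp[-r\ln(r/(et))+O(r)]$; the sharp constants and the passage to the graph metric, however, are exactly what the cited corollary delivers once the hypotheses above are verified, and so the lemma follows.
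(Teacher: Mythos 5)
Your proposal takes the same route as the paper: both deduce the lemma directly from \cite[Corollary 12]{HeatKernelGraph} after verifying the rate bounds and matching notation. The paper's proof is slightly more explicit in one place you subsume under ``matching constants'': the bound in \cite{HeatKernelGraph} carries a factor depending on a spectral constant $\Lambda$, and the paper invokes \cite[Lemma 2]{HeatKernelGraph} to bound $0\le\Lambda\le d-1$ and thereby drop it, rather than just appealing to boundedness of exit rates.
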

We note that $K$ is also referred to as the heat kernel. To see that Lemma \ref{verdrehen = twist, distort} does indeed follow from \cite[Corollary 12]{HeatKernelGraph} we take in notation of \cite{HeatKernelGraph} $b(g) =   \beta(g)$, $g \in { \rm{ \widetilde S}}$, $a(u) = \sum_{v:\ v \sim u}\beta(u,v)$, so that $k \equiv 1$. The constant $\Lambda$ is defined in \cite{HeatKernelGraph}
as an infimum of the spectrum of a certain operator
which in our case can be seen as the generator of $(S_t, t \geq 0)$.
The inequalities  $0 \leq \Lambda \leq d-1$ 
for a $d$-regular graph
follow from \cite[Lemma 2]{HeatKernelGraph}, 
which enables us to
drop $\Lambda$ in \eqref{kauen = chew}.

Under the assumptions of Theorem \ref{thm: at most linear}, recall that $\widetilde{B}$ denotes the effective drift defined in \eqref{eq: effective drift}. It is convenient to separate the martingale components from the drift of the process. To this end, we rewrite \eqref{SDE} in such a way that all stochastic integrals become martingales, i.e.
\begin{align}\label{SDE sulky}
    \eta_t(x) &= \eta_0(x) + \int_0^t \widetilde{B}(x,\eta_s)ds + \int_0^t \sqrt{2c(x,\eta_s(x))} dW_s(x) 
    \\ \notag &\ \ \ + \sum_{y \in V}\int_0^t \int_{\X} \int_{\R_+} \nu(x) \1_{\{u \leq g(y,\eta_{s-}(y))\}} \widetilde{N}_{y}(ds,d\nu,du).
\end{align}
where the noise terms are the same as in (N1) -- (N4). 
\begin{Lemma}\label{lemma: compact support}
    Suppose that the conditions of Theorem \ref{thm: at most linear} are satisfied. Then
    \[
     \int_{\X\backslash \{0\}} \nu(x) H_1(y,d\nu) = 0
    \]
    holds for all $x,y \in V$ such that $\mathrm{dist}(x,y) > R$ and $g(y,\cdot) \neq 0$.
\end{Lemma}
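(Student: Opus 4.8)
The plan is to test the linear-growth bound \eqref{Wirrsal = confusion} on the one-point configuration $s\delta_y$ and to extract the claim by a sign argument. First I would fix $x,y \in V$ with $\mathrm{dist}(x,y) > R$ and $g(y,\cdot) \neq 0$. Since $\mathrm{dist}(x,y) > R \geq 0$, we have $x \neq y$, and the range condition on $b$ in Theorem \ref{thm: at most linear} gives $b(x,y) = 0$. For $s > 0$ I set $\eta^s := s\delta_y$; note $\eta^s \in \X$ because $\|\eta^s\| = \sum_{z \in V} v(z)\eta^s(z) = s\, v(y) < \infty$.

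Next I would evaluate the effective drift at $\eta^s$. Using \eqref{eq: effective drift} with $H_2(d\nu) = 0$ and the facts that $\eta^s(z) = 0$ for $z \neq y$ and $g(z,0) = 0$ (condition (A3)), the sum over $z \in V \setminus \{x\}$ in \eqref{eq: effective drift} collapses to the single index $z = y$, so that
\[
 \widetilde{B}(x,\eta^s) = B(x,\eta^s) + g(y,s)\int_{\X\setminus\{0\}}\nu(x)H_1(y,d\nu).
\]
By (A1), $B(x,\eta^s) = B_0(x,\eta^s) - B_1(x,\eta^s(x)) = B_0(x,\eta^s) \geq 0$, since $\eta^s(x) = 0$ and $B_1(x,0) = 0$, and $B_0$ is $\R_+$-valued. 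On the other hand, \eqref{Wirrsal = confusion} together with $b(x,y) = 0$ yields $\widetilde{B}(x,\eta^s) \leq \sum_{z \in V}b(x,z)\eta^s(z) = b(x,y)\, s = 0$. Since both $B(x,\eta^s)$ and $g(y,s)\int_{\X\setminus\{0\}}\nu(x)H_1(y,d\nu)$ are nonnegative and their sum is $\leq 0$, each summand must equal $0$; in particular $g(y,s)\int_{\X\setminus\{0\}}\nu(x)H_1(y,d\nu) = 0$ for every $s > 0$.

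Finally, because $g(y,\cdot)$ is non-decreasing with $g(y,0) = 0$ by (A3) and $g(y,\cdot)\neq 0$, there exists $s_0 > 0$ with $g(y,s_0) > 0$; dividing the previous identity at $s = s_0$ by $g(y,s_0)$ gives $\int_{\X\setminus\{0\}}\nu(x)H_1(y,d\nu) = 0$, which is the assertion. I do not expect any genuine obstacle here: the only points needing care are the bookkeeping that the sum in \eqref{eq: effective drift} reduces to one term and the nonnegativity of $B(x,\eta^s)$, both of which are immediate from (A1) and the normalization $g(z,0) = 0$; the remainder is a sign chase.
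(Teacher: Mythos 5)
Your proof is correct and follows essentially the same route as the paper: both plug the one-point configuration $\eta = s\delta_y$ (the paper uses $\varepsilon$ in place of $s$) into the bound \eqref{Wirrsal = confusion}, use $b(x,y)=0$ together with $B_0\geq 0$, $B_1(x,0)=0$, $g(z,0)=0$, and $H_2=0$ to reduce $\widetilde B(x,\eta)$ to the nonnegative term $g(y,s)\int\nu(x)H_1(y,d\nu)$, conclude this term is $0$ for all $s$, and finish by choosing $s_0$ with $g(y,s_0)>0$. The only surface-level difference is that you compute $\widetilde B(x,\eta^s)$ exactly and run a sign argument, whereas the paper records the same content as a two-sided inequality; the substance is identical.
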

\begin{proof}
 Using the particular form of $\widetilde{B}$ combined with \eqref{Wirrsal = confusion}, we obtain for each $x \in V$
 \begin{align*}
  \sum_{y \in V \backslash \{x\}}g(y,\eta(y))\int_{\X \backslash \{0\}}\nu(x)H_1(y,d\nu) - B_1(x,\eta(x)) \leq \widetilde{B}(x,\eta) \leq \sum_{y \in V}b(x,y)\eta(y).  
 \end{align*}
 Take $y \neq x$ arbitrary and $\eta(w) = \e\1_{\{w = y\}}$, then
 \[
  g(y,\e)\int_{\X \backslash \{0\}}\nu(x)H_1(y,d\nu) \leq b(x,y)\e, \qquad \e > 0, \ x,y \in V, \ x \neq y.
 \]
 Now let $x,y \in V$ be such that $\mathrm{dist}(x,y) > R$ and $g(y,\cdot) \neq 0$. Then $b(x,y) = 0$ and hence $g(y,\e)\int_{\X \backslash \{0\}}\nu(x)H_1(y,d\nu)=0$. Since $g(y, \cdot) \neq 0$, we find $\e > 0$ such that $g(y,\e) > 0$, which gives $\int_{\X \backslash \{0\}}\nu(x)H_1(y,d\nu) = 0$.
\end{proof}
We are now prepared to prove the result.
\begin{proof}[Proof of Theorem \ref{thm: at most linear}]
 \textit{Step 1.} As a first step we use a comparison principle to reduce the problem to the case of a constant drift. Let $\xi$ be the unique strong solution of
\begin{align*}
    \xi_t(x) &= \eta_0(x) + \sum_{y \in V}\int_0^t b(x,y)\xi_s(y) ds + \int_0^t \sqrt{2c(x,\xi_s(x))} dW_s(x) 
    \\ \notag &\ \ \ + \sum_{y \in V}\int_0^t \int_{\X} \int_{\R_+} \nu(x) \1_{\{u \leq g(y,\xi_{s-}(y))\}} \widetilde{N}_{y}(ds,d\nu,du), \ \ \ x \in V.
\end{align*}
Since $\sum_{y \in V}b(x,y)\eta(y) \geq \widetilde{B}(x,\eta)$ holds for all $x \in V$ and $\eta \in \X$, the comparison principle implies that $\eta_t(x) \leq \xi_t(x)$ holds a.s.. Hence it suffices to prove the assertion for $\xi_t$. Similarly, letting 
\[
 M := \sup\limits_{x, y \in V} b(x,y) < \infty,
\]
we may consider another process $(\zeta_t)_{t \geq 0}$ defined as the unique strong solution of 
\begin{align*}
 \zeta_t(x) &= \eta_0(x) + M\sum_{y \in V}\int_0^t \1_{\{y: \mathrm{dist}(x,y) \leq R\}}\zeta_s(y) ds + \int_0^t \sqrt{2c(x,\zeta_s(x))} dW_s(x) 
    \\ \notag &\ \ \ + \sum_{y \in V}\int_0^t \int_{\X} \int_{\R_+} \nu(x) \1_{\{u \leq g(y,\zeta_{s-}(y))\}} \widetilde{N}_{y}(ds,d\nu,du), \ \ \ x \in V.
\end{align*}
Since $b(x,y) \leq M \1_{\{\mathrm{dist}(x,y)\leq R\}}$, the comparison principle in Theorem \ref{thm comparison different drift} 
yields a.s. $\zeta_t(x) \geq \xi_t(x)$ for all $t\geq 0$ and $x \in V$. Hence, it suffices to prove the assertion for $\zeta_t$.

 \textit{Step 2.} In this step we derive an estimate of the growth of $\E[\zeta_t(x)]$ with respect to $x \in V$ and show \eqref{der Vorrat = supply}. For this purpose, consider a new graph $\widehat G = (V, \widehat E)$ with vertex set $V$, and $u, v  \in V$ are neighbours in $\widehat E$ if and only if $\text{dist}(x, y) \leq R$. Let $\widehat{d}$ be the graph distance on $\widehat{G}$. Then note that $\widehat d(x,y) \leq \text{dist}(x,y) \leq R \widehat d(x,y)$ holds for all $x,y \in V$. Consider a continuous-time random walk $(S_t, t\geq 0)$ on $\widehat{G}$ with transition rates $q_{x,y} = M \1_{\{\text{dist}(x,y) \leq R \}}$ for $x,y \in V$, and let $K(t,x_0,y)$ be the transition probabilities at time $t$ from $x_0$ to $y$. Applying Lemma \ref{verdrehen = twist, distort} to this random walk on $\widehat G$ and using the equivalence of the graph distances $\widehat{d}$ and $\mathrm{dist}$, we find that
\begin{align*}\label{der Schal}
    \notag K(t,x_0,y) &\leq 
    \frac {1}{ M} \exp \Big[  -\widehat d(x_0,y) \ln \Big(  \frac{2\widehat d(x_0,y)}{et} \Big) \Big]
    \\ &\leq \frac {1}{M} \exp \Big[  -\frac{\text{dist}(x_0,y)}{R} \ln \Big(  \frac{2\text{dist}(x_0,y)}{eRt} \Big) \Big].
\end{align*}
In order to relate this bound to the original process $\zeta_t$, let us define $f_x: \R _+ \longrightarrow \R _+$ with $x \in V$ as the expectation of $\zeta_t(x)$, i.e., $f_x (t) = \E \zeta _t (x)$. Taking expectation in \eqref{SDE sulky} we get the representation
\begin{align*}
    f_t(x) &= f_0(x) + M\sum_{y \in V: \ \mathrm{dist}(x,y) \leq R} \int_0^t f_s(y)ds.
\end{align*}
Now we focus on obtaining an upper bound on $f_x(t)$.
The transition probabilities $K(t,x_0,x)$ of $(S_t, t\geq 0)$ satisfy the equation 
\begin{align*}
\begin{cases}
     \frac{d}{dt} K(t,x_0,x)
     &=  -M d_R(x)  K(t,x_0,x) +
     M \sum\limits_{y \in E: 1\leq \text{dist}(x,y) \leq R} 
     K(t,x_0,y), 
     \\ \ \ \ K(t,x_0,x) &=   \1_{\{ x = x_0\}}.
     \end{cases}
\end{align*}
where $d_R(x) = \#\{y \in V: 1\leq \text{dist}(x,y) \leq R\} = \#\mathbb{B}(x,r) - 1$ is the number of vertices within distance $R$ from $x$. Hence $K(t,x_0,\cdot) = e^{tA}\1_{\{x_0\}}$, where $\1_{\{x_0\}}$ is the indicator function and $A$ is a bounded operator on $L^{\infty}(V)$ defined by 
\[
Ah(x) = - Md_R(x)h(x) + M \sum\limits_{y \in V: 1\leq \text{dist}(x,y) \leq R} h(y), \qquad h \in L^{\infty}(V).
\]
Consider another bounded linear operator $B$ on $L^{\infty}(V)$ defined by 
\[
 Bh(x) = -MD h(x) + M \sum\limits_{y \in V: 1\leq \text{dist}(x,y) \leq R}h(y), \qquad h \in L^{\infty}(V),
\]
where $D$ is the maximum degree of $\widehat G$. Note that by \eqref{quietschen = sqeak}, $D \leq d^{R+1} < \infty$, and that  $f_{\cdot}(t) = e^{ M D t }e^{tB} \1_{\{x_0\}}$. Finally, \eqref{quietschen = sqeak} implies $d_R(x) \leq D$ for $x \in V$, whence we have $A \geq B$ and hence also $e^{tA} \geq e^{tB}$.
This readily yields
$$
K(t,x_0, \cdot) = e^{tA}\1_{\{x_0\}} \geq e^{tB}\1_{\{x_0\}} = e^{ -M D t } f_{\cdot}(t), \ \ \  t \geq 0,
$$
and hence $K(t,x_0,y) \geq f_y(t) e^{ -M D t }$ for all $y \in V$. 
In view of \eqref{v: growth bound 2}, we find constants $C_v, \ell > 0$ such that
 \begin{align}\label{eq: v growth condition 1}
  v(x) \geq C_v D^{-\ell \mathrm{dist}(x_0,x)}, \qquad x \in V.
\end{align} 
Define the constant
\[
 C_0 = \max\left\{ 1,\ \frac{MDR}{\ln(D)},\ \frac{D^{(2\ell + 5)R}eR}{2} \right\}.
\] 
Using the above estimates we obtain for all $y \in V$ and $t > 0$ satisfying $\mathrm{dist}(x_0,y) > C_0 t$ that 
\begin{align}
    \notag f_y(t) &\leq \frac{e^{MDt}}{M}\exp\left[ - \frac{\mathrm{dist}(x_0,y)}{R}\ln\left( \frac{2\mathrm{dist}(x_0,y)}{eRt}\right)\right]
    \\ \notag &\leq \frac{1}{M}\exp\left[ \left(\frac{MD}{C_0} - \frac{1}{R}\ln\left(\frac{2C_0}{eR}\right)\right)\mathrm{dist}(x_0,y)\right]
    \\ \notag &\leq \frac{1}{M}\exp\left[ \left( \frac{MD}{C_0} - \frac{(2\ell+5)\ln(D)}{R}\right)\mathrm{dist}(x_0,y) \right]
    \\ &\leq \frac{D^{-2(\ell+2)\mathrm{dist}(x_0,y)/R}}{M}. \label{eq: fy estimate}
\end{align}

\textit{Step 3.} Next, we prove a similar estimate for the expected supremum of the process, i.e., for $\E\left[ \sup_{s \in [0,t]}\zeta_s(x)\right]$. Let $x \ne x_0$ and $t > 0$. Recall that $\eta_0(x) = 0$. Then, by \eqref{SDE sulky}, we arrive at
\begin{align*}
    \sup_{r \in [0,t]}\zeta_r(x) &\leq M\sum_{y \in V: \ \mathrm{dist}(x,y) \leq R}\int_0^t \zeta_s(y)ds + \sup_{r \in [0,t]}\left|\int_0^r \sqrt{2c(x,\zeta_s(x))} dW_s(x) \right|
    \\ &\ \ \  + \sum_{y \in V \backslash \{x\}}\sup_{r \in [0,t]}\left|\int_0^r \int_{\X}\int_{\R_+} \nu(x) \1_{\{u \leq g(y,\zeta_{s-}(y))\}}\widetilde{N}_y(ds,d\nu,du) \right|.
    \\ &\ \ \  + \sup_{r \in [0,t]}\left|\int_0^r\int_{\X}\int_{\R_+} \nu(x) \1_{\{u \leq g(x,\zeta_{s-}(x))\}}\widetilde{N}_x(ds,d\nu,du) \right|.
\end{align*}
Let us bound all terms in expectation. Doob's maximal inequality applied to the continuous martingale gives by (A2) 
\begin{align*}
\E\left[ \sup_{r \in [0,t]}\left|\int_0^r \sqrt{c(x,\zeta_s(x))}dW_s(x) \right|^2 \right] 
&\leq 4C_2(x) \int_0^t f_s(x)ds
\\ &\leq 4\left(\sum_{y \in V}v(y)C_2(y)\right)\frac{1}{v(x)}\int_0^t f_s(x)ds
\end{align*} 
For the sum against $\widetilde{N}_y$ with $x \neq y$ we obtain from (A3) 
\begin{align*}
  &\ \sum_{y \in V \backslash \{x\}}\E\left[\sup_{r \in [0,t]}\left|\int_0^r \int_{\X \backslash \{0\}}\int_{\R_+} \nu(x) \1_{\{u \leq g(y,\zeta_{s-}(y))\}}\widetilde{N}_y(ds,d\nu,du) \right| \right]
  \\ &\leq 2\sum_{y \in V \backslash \{x\}}\int_0^t \int_{\X \backslash \{0\}} \nu(x) \E[g(y,\zeta_{s-}(y))] H_1(y,d\nu)ds
  \\ &\leq 2\sum_{y \in V \backslash \{x\}}C_3(y) \int_{\X \backslash \{0\}} \nu(x)H_1(y,d\nu) \int_0^t f_y(s)ds.
\end{align*}
Finally, for the integrals against $\widetilde{N}_x$ we consider $\{\|\nu\| \leq 1\} \backslash \{0\}$ and $\{\|\nu\| > 1\}$ separately. Namely, we obtain from the Burkholder-Davis-Gundy inequality for Poisson random measures and then (A3) combined with (A4) 
\begin{align*}
    &\ \E\left[  \sup_{r \in [0,t]}\left| \int_0^r \int_{\{\|\nu\| \leq 1\}\backslash \{0\}}\int_{\R_+} \nu(x) \1_{\{u \leq g(x,\zeta_{s-}(x))\}}\widetilde{N}_x(ds,d\nu,du) \right|^2 \right]
    \\ &\leq 4\int_0^t \int_{\{\|\nu\| \leq 1\}\backslash \{0\}} \nu(x)^2 g(x,\zeta_{s}(x))H_1(x,d\nu)ds
    \\ & \leq 4 C_3(x)\left( \int_{\{\|\nu\| \leq 1\}\backslash \{0\}} \nu(x)^2 H_1(x,d\nu) \right)\int_0^t f_s(x)ds
    \\ &\leq 4 \left(\sum_{y \in V}C_3(y)\left( \int_{\{\|\nu\| \leq 1\}\backslash \{0\}} \nu(y)^2 H_1(y,d\nu) \right) \right)\frac{1}{v(x)}\int_0^t f_s(x)ds, 
\end{align*} 
while for the big jumps we obtain from (A4)
\begin{align*}
    &\ \E\left[  \sup_{r \in [0,t]}\left| \int_0^r \int_{\{\|\nu\| > 1\}}\int_{\R_+} \nu(x) \1_{\{u \leq g(x,\zeta_{s-}(x))\}}\widetilde{N}_x(ds,d\nu,du) \right| \right]
    \\ &\leq 2\E\left[ \int_0^t \int_{\{\|\nu\| > 1\}}\int_{\R_+} \nu(x) \1_{\{u \leq g(x,\zeta_{s-}(x))\}}H_1(x,d\nu)duds \right]
    \\ &= 2\left( \int_{\{\|\nu\| > 1\}} \nu(x) H_1(x,d\nu) \right)\int_0^t \E[g(x,\zeta_{s}(y))]ds
    \\ &\leq 2C_3(x)\left( \int_{\{\|\nu\| > 1\}} \nu(x) H_1(x,d\nu) \right)\int_0^t f_s(x)ds
    \\ &\leq 2C_4 \int_0^t f_s(x)ds.
\end{align*}
Combining all these estimates, we find a constant $C > 0$ such that 
\begin{align*}
\E\left[\sup_{r \in [0,t]} \zeta_r(x) \right] 
&\leq M \sum_{y \in V: \ \mathrm{dist}(x,y) \leq R} \int_0^t f_s(y)ds
 \\ &\qquad + 2\sum_{y \in V \backslash \{x\}}C_3(y)\left( \int_{\X \backslash \{0\}} \nu(x) H_1(y,d\nu) \right)\int_0^t f_s(y)ds
 \\ &\qquad + \frac{C}{\sqrt{v(x)}}\left(\int_0^t f_s(x)ds \right)^{1/2} + 2C_4 \int_0^t f_s(x)ds
\end{align*} 
holds for each $x \in V$.
%{Is the term
%$$4 \left(\sum_{y \in V}C_3(y)\left( \int_{\{\|\nu\| \leq 1\}\backslash \{0\}} \nu(y)^2 H_1(y,d\nu) \right) \right)\frac{1}{v(x)}\int_0^t f_s(x)ds$$
%from the previous page  missing in the above inequality?} {Isnt this term covered by $\frac{C}{\sqrt{v(x)}}\left(\int_0^t f_s(x)ds \right)^{1/2}$ where we take the square root to get the square for the martingale part...} \{I agree}
Let $t_0 > 0$ be arbitrary. Letting $t > t_0$ and $x \in V$ be such that 
\[
 \mathrm{dist}(x_0,x) > \left(C_0 + \frac{R}{t_0}\right)t,
\]
we find for $y \in V$ satisfying $\mathrm{dist}(x,y) \leq R$ that
\[
 d(x_0,y) \geq d(x,x_0) - d(x,y) \geq \left(C_0 + \frac{R}{t_0}\right)t - R > C_0t.
\]
Hence we can use the previously shown inequality \eqref{eq: fy estimate} on $f_y(s)$ for $s \in (0,t]$ from Step 2 to find that
\begin{align*}
    \E\left[\sup_{r \in [0,t]} \zeta_r(x) \right]
    &\leq \sum_{y \in V: \ \mathrm{dist}(x,y) \leq R}t D^{-2(\ell+2)\mathrm{dist}(x_0,y)/R} 
    \\ &\ \ \ + \frac{2}{M}\sum_{y \in V \backslash \{x\}}C_3(y)\left( \int_{\X \backslash \{0\}} \nu(x) H_1(y,d\nu) \right)t D^{-2(\ell+2)\mathrm{dist}(x_0,y)/R}
    \\ &\ \ \ + \frac{C}{\sqrt{M}} \sqrt{\frac{t}{v(x)}}D^{-(\ell+2)\mathrm{dist}(x_0,x)/R} + \frac{2C_4}{M}tD^{-2(\ell+2)\mathrm{dist}(x_0,x)/R}.
\end{align*} 
Since the graph $G = (V,E)$ is connected the maximum degree satisfies $D>1$.
For the first and second terms, we use the elementary inequality $xD^{-x} \leq \frac{1}{e\ln(D)}$, $x > 0$, to find 
\begin{align*}
 tD^{-2(\ell + 2)\mathrm{dist}(x_0,y)/R} &< \frac{\mathrm{dist}(x_0,y) R^{-1}D^{-\mathrm{dist}(x_0,y)/R}}{C_0}RD^{-(2\ell+3)\mathrm{dist}(x_0,y)/R} 
 \\ &\leq \frac{R}{e\ln(D)C_0}D^{-\mathrm{dist}(x_0,y)/R}
 \\ & \leq \frac{RD^R}{e \ln(D)C_0}  D^{- \mathrm{dist}(x_0,x)/R}.
\end{align*}
Similarly, we obtain for the last term
\begin{align*}
 tD^{-2(\ell+2)\mathrm{dist}(x_0,x)/R} \leq \frac{RD^R}{e \ln(D)C_0}  D^{- \mathrm{dist}(x_0,x)/R}.
\end{align*}
For the remaining third term, we use \eqref{eq: v growth condition 1} and the elementary inequality $\sqrt{x}D^{-x} \leq \frac{1}{\sqrt{2e\ln(D)}}$ to find that
\begin{align*}
     \sqrt{\frac{t}{v(x)}} D^{-(\ell+2)\mathrm{dist}(x_0,x)/R} &\leq C_v\frac{\sqrt{ R^{-1}\mathrm{dist}(x_0,x)}D^{-\mathrm{dist}(x_0,x)/R}}{\left(C_0 + \frac{R}{t_0}\right)^{1/2}} R^{1/2} D^{-\mathrm{dist}(x_0,x){/R}} 
     \\ &\leq \frac{R^{1/2}D^{-\mathrm{dist}(x_0,x)/R}}{\sqrt{2e\ln(D)\left(C_0 + \frac{R}{t_0}\right)}}.
\end{align*} 
Thus, combining these estimates gives for $t > t_0$ and some constant $C' > 0$ independent of $x,y,t$ the estimate 
\begin{align*}
 \E\left[\sup_{r \in [0,t]} \zeta_r(x) \right] 
 &\leq C'D^{-\mathrm{dist}(x_0,x)/R}
 \\ &\ \ \ + C' \sum_{y \in V \backslash \{x\}}C_3(y) \int_{\X \backslash \{0\}} \nu(x) H_1(y,d\nu) D^{-\mathrm{dist}(x_0,x)/R} 
 \\ &\leq \left(C' + C_4 d^{R+1}\sup_{\mathrm{dist}(x,y) \leq R}\frac{v(y)}{v(x)}\right) D^{-\mathrm{dist}(x_0,x)/R}
\end{align*}
where we have used Lemma \ref{lemma: compact support} to find 
\begin{align*}
  &\ \sum_{y \in V \backslash \{x\}}C_3(y) \int_{\X \backslash \{0\}} \nu(x) H_1(y,d\nu)
  \\  &\leq \sum_{y: \ \mathrm{dist}(y,x) \leq R,\ y \neq x} \1_{\{C_3(y) > 0\}}\frac{C_3(y)}{v(x)} \int_{\X \backslash \{0\}} \sum_{w \in V \backslash \{y\}}v(w)\nu(w) H_1(y,d\nu)
   \\ &\leq C_4\sum_{y: \ \mathrm{dist}(y,x) \leq R,\ y \neq x} \frac{v(y)}{v(x)}
   \\ &\leq C_4 d^{R+1}\sup_{\mathrm{dist}(x,y) \leq R}\frac{v(y)}{v(x)} < \infty.
\end{align*} 
 and have set, without loss of generality, $C_3(y) = 0$ whenever $g(y,\cdot) = 0$.

\textit{Step 4.} In this last step we derive the assertion from the Borel-Cantelli lemma. Namely, letting $C_1 = \left(C_0 + \frac{R}{t_0}\right)$, we obtain for $t \geq t_0$ and $\e > 0$, the estimate
\begin{align*}
   \sum\limits_{t \in \N} \P \left[\sup\limits_{x \in V: |x| > C_1 t} \ \sup_{r \in [0,t]} \zeta_r (x) > \varepsilon \right]
   &\leq \frac{1}{\e}\sum\limits_{t \in \N} \sum\limits_{ \substack{ x \in V:\\ |x| > C_1 t }} \E\left[\sup_{r \in [0,t]} \zeta_r(x) \right] 
   \\ &\leq \frac{C''}{\e}\sum\limits_{t \in \N} \sum\limits_{ \substack{ x \in V:\\ |x| > C_1 t }} D^{-\mathrm{dist}(x_0,x)/R}.
\end{align*}
Since $D > 1$, the right-hand side is finite and we may apply the Borel-Cantelli lemma. This gives
\begin{equation*}
    \P \left[\sup\limits _{x \in V: |x| > C_1 t} \ \sup_{r \in [0,t]} \zeta_r(x)  > \varepsilon
    \text{ for infinitely many } t \in \N \right] = 0
\end{equation*}
which concludes the proof. 
\end{proof}

\appendix

%\subsection*{Acknowledgements}

\section{Martingale property for \eqref{eq: martingale pathwise uniqueness}}

In this section, we will show that $(\mathcal{M}(t \wedge \tau_m))_{t \geq 0}$ is a martingale for each $m,k \geq 1$. For this purpose, we write
$\mathcal{M}(t \wedge \tau_m) = \sum_{j=1}^{5}\mathcal{M}_j(t \wedge \tau_m)$
with
\begin{align*}
    \mathcal{M}_1(t \wedge \tau_m) &:= \int_0^{t \wedge \tau_m} \phi_k'(\zeta_s(x)) \left( \sqrt{2c(x,\eta_s(x))} - \sqrt{2 c(x,\xi_s(x))}\right)dW_s(x),
    \\ \mathcal{M}_2(t \wedge \tau_m) &:= \int_0^{t \wedge \tau_m} \int_{ \{ \| \nu \| \leq 1\} \backslash \{0\}} \int_{\R_+}  D_{\Delta_0(x,s)}\phi_k(\zeta_{s-}(x))  \widetilde{N}_x(ds,d\nu,du),
    \\ \mathcal{M}_3(t \wedge \tau_m) &:= \int_0^{t \wedge \tau_m} \int_{ \{ \| \nu \| > 1\}} \int_{\R_+}  D_{\Delta_0(x,s)}\phi_k(\zeta_{s-}(x))  \widetilde{N}_x(ds,d\nu,du),
    \\ \mathcal{M}_4(t \wedge \tau_m) &:= \sum_{y \in V \backslash \{x\}}\int_0^{t \wedge \tau_m} \int_{ \X \backslash \{0\}} \int_{\R_+}  D_{\Delta_0(y,s)}\phi_k(\zeta_{s-}(x))  \widetilde{N}_y(ds,d\nu,du),
    \\ \mathcal{M}_5(t \wedge \tau_m) &:= \int_0^{t \wedge \tau_m} \int_{\X \backslash \{0\}}\int_{\R_+} D_{\Delta_1(x,s)}\phi_k(\zeta_{s-}(x)) \widetilde{M}(ds,d\nu,du).
\end{align*}
Then it suffices to prove the following lemma:
\begin{Lemma}
 Under the notation of Section 2, the following holds:
\begin{enumerate}
    \item[(a)] $(\mathcal{M}_1(t \wedge \tau_m))_{t \geq 0}$
    is a continuous square-integrable martingale;
    
    \item[(b)] $\left(\mathcal{M}_2(t \wedge \tau_m) \right)_{t \geq 0}$ is a square-integrable martingale;
    
    \item[(c)] $\left(\mathcal{M}_3(t \wedge \tau_m)\right)_{t \geq 0}$ is an integrable martingale;
    
    \item[(d)] $\left(\mathcal{M}_4(t \wedge \tau_m) \right)_{t \geq 0}$ is an integrable martingale.
    
    \item[(e)] $\left( \mathcal{M}_5(t \wedge \tau_m) \right)_{t \geq 0}$ is an integrable martingale.
\end{enumerate}
\end{Lemma}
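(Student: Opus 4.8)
The plan is to handle each of the five pieces $\mathcal{M}_j(\cdot\wedge\tau_m)$ separately, and in every case to reduce the claim to a single moment bound on the integrand over the stochastic interval $[0,t\wedge\tau_m]$; once such a bound is available, the assertion follows from the standard theory of stochastic integration against Brownian motion and against (compensated) Poisson random measures, which promotes a local martingale stopped at $\tau_m$ to a genuine (square-integrable, resp.\ uniformly integrable) martingale. The only tools needed are the localization bounds of Lemma~\ref{lemma: localization}, i.e.\ $\eta_{s-}(x),\xi_{s-}(x)\le m/v(x)$ and $\|\eta_{s-}\|,\|\xi_{s-}\|\le m$ for $s\in[0,\tau_m]$ and all $x\in V$ (so in particular $|\eta_{s-}(x)-\xi_{s-}(x)|\le 2m/v(x)$ and $\|\eta_{s-}-\xi_{s-}\|\le 2m$), together with the pointwise estimate $|D_h\phi_k(z)|\le|h|$, valid for all $z,h\in\R$ because $|\phi_k'|\le 1$ in both of the approximating sequences used in Theorems~\ref{thm: uniqueness} and~\ref{comparison_ic}.

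For (a) one uses $|\phi_k'|\le 1$, the inequality $(\sqrt a-\sqrt b)^2\le|a-b|$, condition (A2) and the localization bound to obtain
\[
\E\!\left[\int_0^{t\wedge\tau_m}\!\big|\phi_k'(\zeta_s(x))\big|^2\Big(\sqrt{2c(x,\eta_s(x))}-\sqrt{2c(x,\xi_s(x))}\Big)^2 ds\right]\le \frac{4C_2(x)m}{v(x)}\,t<\infty,
\]
so $\mathcal{M}_1(\cdot\wedge\tau_m)$ is a continuous square-integrable martingale. For (b) and (c) I would first perform the $du$-integration, giving $\int_{\R_+}|\Delta_0(x,s)|^p\,du=\nu(x)^p|g(x,\eta_{s-}(x))-g(x,\xi_{s-}(x))|$ with $p=2$, resp.\ $p=1$; bounding the difference of $g$ by $C_3(x)|\eta_{s-}(x)-\xi_{s-}(x)|\le 2C_3(x)m/v(x)$ via (A3) and then integrating $\nu(x)^2$ against $H_1(x,\cdot)$ over $\{\|\nu\|\le 1\}\setminus\{0\}$, resp.\ $\nu(x)$ over $\{\|\nu\|>1\}$ --- finite by (A4) --- shows that $\mathcal{M}_2(\cdot\wedge\tau_m)$ is square-integrable and $\mathcal{M}_3(\cdot\wedge\tau_m)$ is integrable. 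Part (e) is analogous: $\int_{\R_+}|\Delta_1(x,s)|\,du=\nu(x)|\rho(x,\eta_{s-},\nu)-\rho(x,\xi_{s-},\nu)|$, and (A5) applied with the single site $x$ together with $\|\eta_{s-}-\xi_{s-}\|\le 2m$ gives $\int_{\X\setminus\{0\}}\nu(x)|\rho(x,\eta_{s-},\nu)-\rho(x,\xi_{s-},\nu)|H_2(d\nu)\le 2mC_5(m)/v(x)$, whence $\mathcal{M}_5(\cdot\wedge\tau_m)$ is an integrable martingale.

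The only step that needs genuine care is (d), where the sum over $y\in V\setminus\{x\}$ must be controlled with the weight function, exactly as in the proof of Theorem~\ref{first_moment_estimate}. There $\int_{\R_+}|\Delta_0(y,s)|\,du=\nu(x)|g(y,\eta_{s-}(y))-g(y,\xi_{s-}(y))|\le C_3(y)\nu(x)|\eta_{s-}(y)-\xi_{s-}(y)|$; inserting $v(x)/v(x)$, using $\nu(x)v(x)\le\sum_{w\in V\setminus\{y\}}\nu(w)v(w)$ (recall $y\ne x$) and the third bound in (A4) then yields
\[
\sum_{y\in V\setminus\{x\}}C_3(y)\Big(\int_{\X\setminus\{0\}}\nu(x)H_1(y,d\nu)\Big)\E\!\left[\int_0^{t\wedge\tau_m}|\eta_{s-}(y)-\xi_{s-}(y)|\,ds\right]\le\frac{C_4}{v(x)}\,\E\!\left[\int_0^{t\wedge\tau_m}\|\eta_{s-}-\xi_{s-}\|\,ds\right]\le\frac{2C_4 m t}{v(x)},
\]
which is finite, so $\mathcal{M}_4(\cdot\wedge\tau_m)$ is an integrable martingale. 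I expect this weight bookkeeping to be the main (indeed essentially the only) obstacle; all the remaining estimates are routine verifications of the $L^1$/$L^2$ integrability conditions that turn the stopped local martingales into true martingales.
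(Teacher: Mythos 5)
Your proposal is correct and follows essentially the same route as the paper's appendix proof: for $\mathcal{M}_1$ the It\^o isometry with $(\sqrt a-\sqrt b)^2\le|a-b|$, (A2) and the localization bound; for $\mathcal{M}_2$ and $\mathcal{M}_3$ the $du$-integration to reduce to $|g(x,\eta)-g(x,\xi)|$, then (A3), (A4) and localization; for $\mathcal{M}_4$ exactly the insertion of $v(x)/v(x)$ and the domination $v(x)\nu(x)\le\sum_{w\ne y}v(w)\nu(w)$ feeding into the third bound of (A4); and for $\mathcal{M}_5$ the domination of the single-site term by the full weighted sum in (A5). The resulting explicit bounds match the paper's (including the factor $\frac{2C_4}{v(x)}\cdot 2mt$ for part (d) and $\frac{2mC_5(m)t}{v(x)}$ for part (e)), so you have correctly identified (d) as the only place where weight bookkeeping is nontrivial, and that bookkeeping is done the same way the paper does it.
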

\begin{proof}
 (a) Using first Ito's isometry, then $| \phi_k' | \leq 1$ and $(a-b)^2 \leq |a^2 - b^2|$ for $a,b \geq 0$, we obtain
 \begin{align*}
     \E\left[ \left| \mathcal{M}_1(t \wedge \tau_m) \right|^2 \right]
     &= \E\left[ \int_0^{t \wedge \tau_m} \phi_k'(\zeta_s(x))^2 \left( \sqrt{2c(x,\eta_s(x))} - \sqrt{2 c(x,\xi_s(x))}\right)^2 ds \right]
     \\ &\leq 2 \E\left[ \int_0^{t \wedge \tau_m} \left|c(x,\eta_{s-}(x)) - c(x,\xi_{s-}(x)) \right| ds \right]
     \\ &\leq 2C_2(x) \E\left[ \int_0^{t \wedge \tau_m} |\eta_{s-}(x) - \xi_{s-}(x)| ds \right]
     \\ &\leq C_2(x)\frac{4mt}{v(x)} < \infty,
 \end{align*}
 where we have used \eqref{eq: boundedness by localization} and (A2).
 
 (b) Recall that $\widehat{N}_x(ds,d\nu,du) = ds H_1(x,d\nu)du$, so that the assertion follows from 
 \begin{align*}
    &\ \E\left[ \int_0^{t \wedge \tau_m} \int_{ \{ \| \nu \| \leq 1\} \backslash \{0\}} \int_{\R_+}  |D_{\Delta_0(x,s)}\phi_k(\zeta_{s-}(x)) |^2 ds H_1(x,d\nu) du \right]
     \\ &\leq \E\left[ \int_0^{t \wedge \tau_m} \int_{ \{ \| \nu \| \leq 1\} \backslash \{0\}} \int_{\R_+} | \Delta_0(x,s) |^2 ds H_1(x,d\nu)du \right]
     \\ &= \E\left[ \int_0^{t \wedge \tau_m} \int_{ \{ \| \nu \| \leq 1\} \backslash \{0\}} \nu(x)^2 | g(x,\eta_{s-}(x)) - g(x,\xi_{s-}(x))| ds H_1(x,d\nu) \right]
     \\ &\leq C_3(x)\left( \int_{ \{ \| \nu \| \leq 1\} \backslash \{0\}} \nu(x)^2 H_1(x,d\nu)\right) \E\left[ \int_0^{t \wedge \tau_m}|\eta_{s-}(x) - \xi_{s-}(x)| ds\right]
     \\ &\leq  2C_3(x)\left( \int_{ \{ \| \nu \| \leq 1\} \backslash \{0\}} \nu(x)^2 H_1(x,d\nu)\right) \frac{mt}{v(x)} < \infty,
 \end{align*}
 where we have used \eqref{eq: boundedness by localization} and (A3), (A4).
 
 (c) Analogously to the estimates in part (b) we obtain
 \begin{align*}
     &\ \E\left[ \int_0^{t \wedge \tau_m} \int_{ \{ \| \nu \| > 1\}} \int_{\R_+} \left| D_{\Delta_0(x,s)}\phi_k(\zeta_{s-}(x)) \right|  ds H_1(x,d\nu)du \right]
     \\ &\leq \E\left[ \int_0^{t \wedge \tau_m} \int_{ \{ \| \nu \| > 1\}} \int_{\R_+} \left| \Delta_0(x, s) \right|  ds H_1(x,d\nu)du \right]
     \\ &= \E\left[ \int_0^{t \wedge \tau_m} \int_{ \{ \| \nu \| > 1\}} \nu(x) \left| g(x,\eta_{s-}(x)) - g(x,\xi_{s-}(x)) \right|  ds H_1(x,d\nu) \right]
     \\ &\leq C_3(x) \left(\int_{ \{ \| \nu \| > 1\}} \nu(x) H_1(x,d\nu) \right) \E\left[ \int_0^{t \wedge \tau_m}|\eta_{s-}(x) - \xi_{s-}(x)| ds \right]
     \\ &\leq 2C_3(x) \left(\int_{ \{ \| \nu \| > 1\}} \nu(x) H_1(x,d\nu) \right) \frac{mt}{v(x)} < \infty.
 \end{align*}
 
 (d) Similarly we obtain in this case
 \begin{align*}
  &\ \E\left[ \left| \mathcal{M}_4(t \wedge \tau_m) \right| \right]
   \\ &\leq \sum_{y \in V \backslash \{x\}} \E\left[ \left| \int_0^{t \wedge \tau_m} \int_{ \X \backslash \{0\}} \int_{\R_+}  D_{\Delta_0(y,s)}\phi_k(\zeta_{s-}(x))  \widetilde{N}_y(ds,d\nu,du) \right| \right]
   \\ &\leq 2\sum_{y \in V \backslash \{x\} }
   \E\left[ \int_0^{t \wedge \tau_m} \int_{ \X \backslash \{0\}} \int_{\R_+}  \left| D_{\Delta_0(y,s)}\phi_k(\zeta_{s-}(x)) \right| ds H_1(y,d\nu)du \right]
  \\ &\leq \frac{2}{v(x)}\sum_{y \in V\backslash \{x\}} C_3(y) \left(\int_{ \X \backslash \{0\}} v(x)\nu(x) H_1(y,d\nu) \right) \E\left[ \int_0^{t \wedge \tau_m}|\eta_{s-}(y) - \xi_{s-}(y)| ds \right]
  \\ &\leq \frac{2}{v(x)} \sum_{z \in V} \sum_{y \in V \backslash \{z\}} C_3(y) \left(\int_{ \X \backslash \{0\}} v(z)\nu(z) H_1(y,d\nu) \right) \E\left[ \int_0^{t \wedge \tau_m}|\eta_{s-}(y) - \xi_{s-}(y)| ds \right]
  \\ &= \frac{2}{v(x)} \sum_{y \in V} C_3(y) \left(\int_{ \X \backslash \{0\}} \sum_{z \in V \backslash \{y\}} v(z)\nu(z) H_1(y,d\nu) \right) \E\left[ \int_0^{t \wedge \tau_m}|\eta_{s-}(y) - \xi_{s-}(y)| ds \right]
  \\ &\leq \frac{2C_4}{v(x)} \sum_{y \in V} V(y) \E\left[ \int_0^{t \wedge \tau_m}|\eta_{s-}(y) - \xi_{s-}(y)| ds \right]
  \\ &\leq \frac{4mtC_4}{v(x)} < \infty,
 \end{align*}
 where we have used (A4).
 Hence the series is absolutely convergent in $L^1$ and thus $(\mathcal{M}_4(t \wedge \tau_m))_{t \geq 0}$ is a martingale.
 
 (e) Using that $\widehat{M}(ds,d\nu,du) = ds H_2(d\nu)du$, the assertion follows from
 \begin{align*}
     &\ \E\left[ \int_0^{t \wedge \tau_m} \int_{\X \backslash \{0\}}\int_{\R_+} \left| D_{\Delta_1(x,s)}\phi_k(\zeta_{s-}(x)) \right| ds H_2(d\nu)du \right]
     \\ &\leq \E \left[ \int_{0}^{t \wedge \tau_m} \int_{\X \backslash \{0\}}\nu(x) | \rho(x,\eta_s, \nu) - \rho(x,\xi_s, \nu) | H_2(d\nu) ds \right]
     \\ &\leq \frac{1}{v(x)} \E\left[ \int_0^{t \wedge \tau_m} \int_{\X \backslash \{0\}} \sum_{y \in V}v(y) \nu(y)| \rho(y,\eta_s, \nu) - \rho(y,\xi_s, \nu)| H_2(d\nu) ds \right]
     \\ &\leq \frac{C_5(m)}{v(x)}\E\left[ \int_0^{t \wedge \tau_m} \| \eta_s - \xi_s \| ds \right]
     \\ &\leq \frac{2m C_5(m)t}{v(x)} < \infty
 \end{align*}
 where we have used (A5).
\end{proof}

\bibliographystyle{amsplain}
\bibliography{references}

\end{document}